\setlist[enumerate]{label=\textnormal{\bfseries(\alph*)}, leftmargin=*, nosep, widest=a}
\setlist[itemize,1]{leftmargin=*}
\def\@seccntformat#1{%
  \protect\textup{%
    \protect\@secnumfont
    \expandafter\protect\csname format#1\endcsname %
    \csname the#1\endcsname
    \protect\@secnumpunct
  }%
}
\def\equationautorefname~#1\null{%
  Equation~(#1)\null
}
\newtheoremstyle{convenientthm}%
  {3pt}%
  {3pt}%
  {\itshape}%
  {}%
  {\bfseries}%
  {.}%
  {.5em}%
  {\thmnumber{#2 }\thmname{#1}\thmnote{. #3\addcontentsline{toc}{subsection}{\tocsubsection {}{#2}{#1. #3}}}}%
\newtheoremstyle{convenientplain}%
  {3pt}%
  {3pt}%
  {}%
  {}%
  {\bfseries}%
  {.}%
  {.5em}%
  {\thmnumber{#2 }\thmname{#1}\thmnote{. #3\addcontentsline{toc}{subsection}{\tocsubsection {}{#2}{#1. #3}}}}%
\theoremstyle{convenientthm}
\newtheorem{theorem}[subsection]{Theorem}
\newtheorem*{theorem*}{Theorem}
\newtheorem{lemma}[subsection]{Lemma}
\theoremstyle{convenientplain}
\newtheorem{condition}[subsection]{Conditions on \texorpdfstring{$P$}{P}}
\newtheorem{remark}[subsection]{Remark}
\Crefname{condition}{Condition}{Conditions}
\def\o{\circ}
\def\al{\alpha}
\def\be{\beta}
\def\ga{\gamma}
\def\de{\delta}
\def\ep{\varepsilon}
\def\la{\lambda}
\def\ph{\varphi}
\def\ps{\psi}
\def\om{\omega}
\def\Ga{\Gamma}
\def\De{\Delta}
\def\La{\Lambda}
\def\i{^{-1}}
\def\x{\times}
\let\on=\operatorname
\def\AMSonly#1{}
\def\Id{\on{Id}}
\def\Tr{\on{Tr}}
\def\vol{\on{vol}}
\def\Vol{\on{Vol}}
\def\Diff{\on{Diff}}
\def\Met{\operatorname{Met}}
\def\na{\nabla}
\DeclareMathOperator{\dist}{dist}
\title[Smooth perturbations of the fractional Laplacian]{Smooth perturbations of the functional calculus and applications to Riemannian geometry on spaces of metrics}
\author[M.Bauer, M.Bruveris, P.Harms, PW.Michor]
{Martin Bauer, Martins Bruveris, Philipp Harms, Peter W.~Michor}
\address{Martin Bauer: Faculty for Mathematics, Florida State University, USA}
\email{bauer@math.fsu.edu}
\address{Martins Bruveris: Onfido, 3~Finsbury Avenue, London EC2M 2PA, UK}
\email{martins.bruveris@gmail.com}
\address{Philipp Harms: Faculty for Mathematics, Freiburg University, Germany}
\email{philipp.harms@stochastik.uni-freiburg.de}
\address{
Peter W. Michor: Faculty for Mathematics, University of Vienna, Austria}
\email{peter.michor@univie.ac.at}
\date{\today}
\keywords{Functional calculus, perturbation of operators, fractional Laplacians, spaces of Riemannian metrics, well-posedness of geodesic equations}
\subjclass[2010]{%
Primary 46T05; %
secondary 58E10, %
47A56. %
}
\thanks{We thank Lashi Bandara, Andreas Kriegl, Peer Kunstmann, Gerard Misiolek, Marvin S.~M\"uller, Armin Rainer, and Lutz Weis for helpful discussions. MB was partially supported by NSF-grants 1912037 and 1953244. PH was partially supported in the form of a Junior Fellowship of the Freiburg Institute of Advances Studies.}
\begin{document}

\begin{abstract}
We show for a certain class of operators $A$ and holomorphic functions $f$ that the functional calculus $A\mapsto f(A)$ is holomorphic. 
Using this result we are able to prove that fractional Laplacians $(1+\De^g)^p$ depend real analytically on the metric $g$ in suitable Sobolev topologies.
As an application we obtain local well-posedness of the geodesic equation for fractional Sobolev metrics on the space of all Riemannian metrics. %
\end{abstract}

\maketitle

\section{Introduction}
\label{sec:introduction} 
The space of all Riemannian (Lorentzian, resp.) metrics plays an important role in many areas of pure and applied mathematics and in particular in mathematical physics: it is the natural configuration space for Einstein's equation in general relativity~ \cite{DeWitt67}, it is the central object in Teichm\"uller theory~\cite{fischer1984purely,yamada2014local}, and it appears in the context of mathematical shape analysis~\cite{zhang2021elastic,campbell2021structural}. The above are just a few examples of an extensive list. 

Riemannian geometry on the space of all Riemannian metrics starts with de~Witt \cite{DeWitt67}, who developed a Hamiltonian formulation of general relativity using an $L^2$ metric on the space of all pseudo-Riemannian metrics with fixed signature. 
Ebin \cite{Ebin70} used this metric to prove his slice theorem and, followed by many others, investigated its mathematical properties. 
Clarke \cite{Clarke2013b} showed that the completion of $\Met(M)$ with respect to the $L^2$ geodesic distance contains degenerate and highly irregular metrics. 
This motivated the quest for stronger metrics without this degeneracy, including conformal deformations of the $L^2$ metric \cite{Clarke2013c} and higher order Sobolev metrics \cite{Bauer2013c,smolentsev2007spaces}. 
These developments are reviewed more extensively in \cref{sec:literature:geometry}.

This article establishes local well-posedness of the geodesic equation for a wide class of metrics on the space of all Riemannian metrics. 
A simplified version, for the special case of fractional order Sobolev metrics, of our main result reads as follows:
\begin{theorem*}
On any closed manifold $M$ and for any real number $p\geq1$, the geodesic equation of the weak Riemannian metric 
\begin{equation*}
G_g(h,k)=\int_M \operatorname{Tr}(g\i h g\i(1+\Delta^g)^p   k)\vol(g),\; g\in \Met(M), \;h,k\in T_g\Met(M),    
\end{equation*}
is locally well-posed in the sense of Hadamard. 
\end{theorem*}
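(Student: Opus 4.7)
The strategy is classical: rewrite the geodesic equation as a second-order ODE on a Sobolev completion of $\Met(M)$, apply Picard--Lindelöf to obtain local well-posedness at that level, and then transfer back to the smooth category by a no-loss-no-gain argument. The essential new ingredient is the perturbation theorem for the fractional Laplacian stated above, which delivers the required smoothness of the inertia operator and its inverse.

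Fix $\al>m/2$ with $\al>1$ and $\al$ large enough that all terms below make sense, and replace $\Met(M)$ by the Banach manifold $\Met_{H^\al}(M)$. The weak metric $G$ can be written as $G_g(h,k)=\langle h,L_g k\rangle_{g,L^2}$, where the inertia operator $L_g$ acts on symmetric $(0,2)$-tensor fields and, up to algebraic factors depending on $g$, $g\i$ and $\vol(g)$, equals $(1+\De^g)^p$. By the preceding theorem, $g\mapsto L_g$ is real analytic from $\Met_{H^\al}(M)$ into the space of bounded operators from $H^s$ to $H^{s-2p}$-sections on the relevant range of $s$; applying the same theorem with exponent $-p$ yields real-analytic dependence of $L_g\i$.

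Differentiating the energy along a curve $t\mapsto (g(t),\p_t g(t))$, the Euler--Lagrange equations take the form
\begin{equation*}
\p_t g=h,\qquad \p_t h=L_g\i\bigl(Q(g,h)\bigr),
\end{equation*}
where $Q(g,h)$ is a finite sum of terms depending polynomially on $g$, $g\i$, $h$, finitely many of their derivatives, and the directional derivative $(dL)_g(h)$, which is itself real analytic in $g$ by the perturbation theorem. A careful accounting of derivatives, exploiting that $L_g\i$ gains $2p$ derivatives, shows that the resulting spray $(g,h)\mapsto(h,L_g\i Q(g,h))$ is a real-analytic vector field on $T\Met_{H^\al}(M)$. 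The hypothesis $p\geq 1$ enters precisely here: the order $2p\geq 2$ of the inertia operator is just sufficient for the gain of $L_g\i$ to absorb the loss in $Q$ and for the spray to close on a single Sobolev scale.

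Once the spray is a real-analytic vector field on the Banach manifold $T\Met_{H^\al}(M)$, the Picard--Lindelöf theorem produces, for every initial condition, a unique local integral curve depending real analytically on the initial data, which is local well-posedness in the sense of Hadamard at the $H^\al$ level. To pass to the smooth category one runs a no-loss-no-gain argument in the spirit of Ebin--Marsden: the same construction gives a real-analytic spray on $T\Met_{H^\beta}(M)$ for every $\beta\geq\al$, and uniqueness at level $\al$ forces the $H^\beta$-integral curve through a smoother initial datum to coincide with the $H^\al$-one. Intersecting over all $\beta$ yields a well-posed flow on $\Met(M)$. The main obstacle is the delicate regularity bookkeeping in the construction of the spray, where the full strength of the perturbation theorem---the control of both domain and range of $(1+\De^g)^{\pm p}$ on the entire scale of fractional Sobolev spaces, including the boundary cases---is essential.
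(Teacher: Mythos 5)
Your overall strategy matches the paper's: pass to a Sobolev completion $\Met_{H^\al}(M)$, show the geodesic spray extends to a smooth vector field there, apply Picard--Lindel\"of, and propagate to the smooth category by a no-loss-no-gain argument in the spirit of Ebin--Marsden. The structure of the argument is correct. However, the crucial step is compressed into ``a careful accounting of derivatives,'' and this is precisely where the paper needs a dedicated ingredient that your outline does not supply.

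The geodesic equation contains the term $(D_{(g,\cdot)}P_g g_t)^*(g_t)$, the $H^0(g)$-adjoint in the direction of the metric variation. For this adjoint to land in $\Ga_{H^{\al-2p}}$ (so that $P_g\i$ can absorb it), one needs the directional derivative $(g,q)\mapsto D_{g,q}P_g$ to extend continuously to variations $q$ of \emph{low} regularity, namely $q\in\Ga_{H^{2p-\al}}(S^2T^*M)$, producing an operator in $L(\Ga_{H^\al},\Ga_{H^{-\al}})$. This does \emph{not} follow from the perturbation theorem for $g\mapsto f(1+\De^g)$ that you invoke: that theorem differentiates with respect to $g$ only in directions of full regularity $H^\al$. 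The paper therefore proves a separate lemma on the derivative of the Laplacian (and of the fractional Laplacian) with respect to the metric, which controls the loss of regularity in $q$ by a detailed term-by-term analysis of $D_{g,q}\De^g$ and then passes through the resolvent integral. Without this, the spray term with the adjoint does not close on a single Sobolev scale, and the Picard--Lindel\"of step fails. Your ``by the perturbation theorem'' attribution covers the other terms but not this one.

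A secondary but conceptually relevant point: your heuristic for why $p\ge 1$ is needed (``the gain $2p$ of $L_g\i$ just suffices to absorb the loss in $Q$'') is misleading, since the loss in $Q$ is also exactly $2p$, so the balance would hold for any $p>0$. The actual source of the restriction $p\ge 1$ is technical: the adjoint lemma is proved by applying the abstract perturbation theorem on an intersection of two operator spaces $L(\dot X_{\be+1},\dot X_\be)\cap L(\dot X_{\ga+1},\dot X_\ga)$, and the two Sobolev scales involved must be \emph{distinct} ($\be<\ga$). Tracing the indices, this forces $p\ne 1$; the borderline case $p=1$ for the pure power $f(z)=z^p$ is then handled separately via the lemma on the first derivative of the Laplacian. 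Finally, the paper concludes only smooth (not real-analytic) dependence of the flow on initial data; claiming real-analytic dependence is an overreach beyond what the no-loss-no-gain argument establishes, though it does not affect well-posedness in the sense of Hadamard.
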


Here the Bochner Laplacian $\Delta^g$ acts on symmetric covariant two-tensor fields and is associated to the Levi-Civita connection of the Riemannian metric $g$. 
The theorem follows from the more general \cref{thm:satisfies,thm:wellposed}.
It extends previous well-posedness results for integer order Sobolev metrics (see \cref{sec:literature:mappings}) to more general metrics, including metrics of fractional order.
The proof is an adaptation of the seminal method of Ebin and Marsden \cite{EM1970} for establishing local well-posedness of the incompressible Euler equation.
The adaptation is necessary because the action of diffeomorphisms on metrics differs from the right-action of diffeomorphisms considered in fluid mechanics.
The main difficulty is to show that the geodesic spray is smooth in suitable Sobolev topologies. 
This is a consequence of the following new perturbative result for fractional Laplacians:

\begin{theorem*}
Let $M$ be a closed manifold of dimension $m$,
let $\al\in (m/2,\infty)$ with $\al>1$, 
let $E$ be a natural first order vector bundle over $M$, 
let $\De^g$ be the Bochner Laplacian on $E$ induced by a Riemannian metric $g$,
and let $s,s-2p\in[-\al,\al]$.
Then the following map is real analytic:
\begin{equation*}
\Met_{H^{\al}}(M) \ni g \mapsto (1+\Delta^g)^p \in L(\Ga_{H^s}(E),\Ga_{H^{s-2p}}(E)).
\end{equation*}
\end{theorem*}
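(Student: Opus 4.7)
The plan is to apply \autoref{thm:smooth} (the more general form of the first displayed theorem) to $B=1+\De^g$ with $f(z)=z^p$. Three points need to be checked: (i) $1+\De^g$ is invertible, R-sectorial, and has a bounded $\mathcal H^\infty$ calculus on a fixed underlying Hilbert space; (ii) its fractional domain spaces coincide with Sobolev sections, $\dot X_{s'}=\Ga_{H^{2s'}}(E)$, in the relevant range; and (iii) the assignment $g\mapsto 1+\De^g$ is real analytic in the requisite intersection of operator spaces. Under (ii), the choice $\be=-\al/2$, $\ga=(\al-2)/2$, $r=-p$ converts the abstract conditions $\be<\ga$ and $s,s+r\in[\be,\ga+1]$ into precisely $\al>1$ and $s,s-2p\in[-\al,\al]$, which is what is assumed, so the desired mapping property would then be immediate.

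Items (i) and (ii) are essentially standard once one works on the Hilbert space $L^2(E,g_0)$ for a smooth reference metric $g_0$ (the equivalence of $L^2$-norms for $g$ and $g_0$ is automatic because $H^\al\hookrightarrow C^0$ under $\al>m/2$): the Bochner Laplacian is positive self-adjoint, hence sectorial, and on a Hilbert space sectoriality coincides with R-sectoriality; the spectral theorem supplies a bounded $\mathcal H^\infty$ calculus; and the fractional domain spaces are identified with the Sobolev sections up to equivalence of norms within the regularity window of an $H^\al$ metric.

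The main difficulty lies in (iii). In local coordinates one has $\De^g = -g^{ij}\na^E_i\na^E_j + (\text{first-order terms})$, whose leading coefficients depend analytically on $g$ via matrix inversion on the open cone of positive-definite matrices, and whose subleading coefficients are polynomial in $g^{ij}$ and in $\p g$. Real analyticity at the operator level therefore reduces to two Sobolev multiplication estimates: multiplication by an $H^\al$ function must be bounded $\Ga_{H^{s'}}(E)\to\Ga_{H^{s'}}(E)$ for $s'\in[-\al,\al]$, and multiplication by an $H^{\al-1}$ function must be bounded $\Ga_{H^{s'+1}}(E)\to\Ga_{H^{s'}}(E)$ for $s'\in[-\al,\al-1]$. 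Both follow from the hypothesis $\al>m/2$ by standard Sobolev multiplication theorems (the second one also using $\al>1$). An application of \autoref{thm:smooth} and its evident complex extension, combined with convenient calculus~\cite{KM97} to pass from holomorphic to real analytic, then yields the stated real analyticity of $g\mapsto(1+\De^g)^p$.
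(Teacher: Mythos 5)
Your overall strategy is the same as the paper's: apply \autoref{thm:smooth} to $B=1+\De^g$ with $f(z)=z^p$, after identifying the abstract fractional domain spaces with Sobolev sections and establishing real analyticity of $g\mapsto 1+\De^g$ in the appropriate intersection of operator spaces. The index bookkeeping in your proposal is correct: under your normalization $\dot X_{s'}=\Ga_{H^{2s'}}(E)$ the choices $\be=-\al/2$, $\ga=(\al-2)/2$, $r'=-p$ convert the abstract hypotheses of \autoref{thm:smooth} into exactly $\al>1$ and $s,s-2p\in[-\al,\al]$.

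However, there is a genuine gap in items~(i) and (ii), which you label ``essentially standard'': you base the construction on the Hilbert space $L^2(E,g_0)$, i.e.\ you take $\dot X_0=L^2$ and $\dot X_1=\on{Dom}(1+\De^g)$, and want $\dot X_{s'}=\Ga_{H^{2s'}}(E)$, in particular $\dot X_1=\Ga_{H^2}(E)$. This requires the Laplacian $\De^g\colon \Ga_{H^2}(E)\to\Ga_{H^0}(E)$ to be bounded. By \autoref{thm:bochner} this needs $2\in[2-\al,\al]$, i.e.\ $\al\geq 2$; for $\al\in(1,2)$ the zero-order coefficients of $\De^g$ lie only in $H^{\al-2}$ with $\al-2<0$, and the Sobolev multiplication $H^{\al-2}\cdot H^2\to H^0$ fails under \autoref{thm:module}. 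Defining $1+\De^g$ on $L^2$ via the quadratic form on $H^1$ is possible for $\al\geq1$, but then the operator domain $\dot X_1$ is an abstract space and the identification $\dot X_1=\Ga_{H^2}(E)$ is precisely the thing that fails; the bootstrapping argument that the paper uses in the proof of \autoref{lem:fractional} (start from the base case, iterate $(1+\De^g)^{\pm1}$, and interpolate) cannot reach the Sobolev index $2$ from the base index $0$ when $\al<2$ because $(1+\De^g)\colon H^2\to H^0$ is not available. This is exactly why the paper bases its construction on $\Ga_{H^{-1}}(E)$ with domain $\Ga_{H^1}(E)$ (so that $1+\De^g\colon H^1\to H^{-1}$ needs only $\al\geq1$) and defines $\Ga_{H^r(g)}(E):=\dot X_{(r+1)/2}$; the paper even remarks explicitly, right after \autoref{sec:domain}, that the ``$L^2$'' variant of this construction requires $\al\geq2$. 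Your proposal therefore proves the theorem only in the range $\al\geq2$, not for all $\al>1$ as claimed. To close the gap you would have to either switch the base space to $\Ga_{H^{-1}}(E)$ and carry out the inductive identification of fractional domains with Sobolev spaces along the lines of \autoref{lem:functional} and \autoref{lem:fractional}, or supply a separate square-root-type argument to make the $L^2$-based scale identification work below $\al=2$; neither of these is ``standard''.

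A minor additional remark: in item~(iii), real analyticity of $g\mapsto 1+\De^g$ into the space $L(\Ga_{H^\al},\Ga_{H^{\al-2}})\cap L(\Ga_{H^{2-\al}},\Ga_{H^{-\al}})$ is indeed obtainable by local-coordinate arguments as you sketch, but it is worth noting that the Bochner Laplacian on a general natural first-order bundle carries metric-dependent lower-order (connection) terms, so one should not conflate it with the scalar Laplace--Beltrami operator; this is handled by \autoref{lem:covariant} and \autoref{thm:bochner} in the paper.
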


Here $\Met_{H^{\al}}(M)$ is the cone of Riemannian metrics on $M$ with Sobolev regularity $H^\al$.
The theorem is a special case of \cref{thm:perturbations} below, which covers fractional powers and more general holomorphic functions of Bochner Laplacians on arbitrary tensor bundles with symmetries. 
The operators are defined using the Levi-Civita connection of the metric and may have coefficients with less regularity than the Dirac and divergence form operators studied in previous related work; see \cref{sec:literature:perturbations}. 
Moreover, these operators are considered not just as unbounded operators on $L^2$, but on an entire scale of fractional Sobolev spaces of positive and negative regularity, with particular attention to the boundary cases of minimal and maximal regularity. 
The above perturbative result is easily seen for integer powers $p$. 
To extend it to non-integer powers $p$, we show that the functional calculus is real analytic in the following sense:

\begin{theorem*}
Let $A$ be a densely defined invertible R-sectorial operator with bounded $\mathcal H^\infty$ calculus on a complex Banach space $X$, 
and let $(\dot X_r)_{r\in \mathbb R}$ be the fractional domain spaces associated to $A$. 
Then the following map is well-defined and holomorphic near $A$ for all $\be<\ga$ and $s,s+r\in[\be,\ga+1]$:
\begin{equation*}
L(\dot X_{\be+1},\dot X_\be)\cap L(\dot X_{\ga+1},\dot X_\ga) \ni B \mapsto B^{-r} \in L(\dot X_s,\dot X_{s+r}).
\end{equation*}
\end{theorem*}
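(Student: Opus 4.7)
The plan is to realise $B^{-r}$ via a Dunford--Riesz contour integral
\[
B^{-r} = \frac{1}{2\pi i}\int_\Ga \la^{-r}(\la - B)\i\, d\la,
\]
with $\Ga$ a Hankel-type contour enclosing the spectrum of $A$ and avoiding the origin, and then to deduce holomorphy of $B \mapsto B^{-r}$ by differentiating under the integral sign. Well-definedness reduces to convergence of the integral together with its agreement with the bounded $\mathcal{H}^\infty$-functional calculus applied to $f(z) = z^{-r}$, which is standard.

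Since $A$ is invertible and R-sectorial, its spectrum lies in a sector $\Si_\om$ bounded away from $0$, so $\Ga$ may be chosen just outside $\Si_\om$. R-sectoriality yields the uniform decay $\|(\la - A)\i\|_{L(\dot X_s)} \le C|\la|\i$ for $\la \in \Ga$ at the endpoint scales $s = \be$ and $s = \ga$, and the bounded $\mathcal{H}^\infty$-calculus identifies the fractional domain scale with a complex interpolation scale, so the same decay propagates to all intermediate $s \in [\be, \ga + 1]$. For $B$ close to $A$ in the intersection norm I would then write
\[
\la - B = (\la - A)[\Id - (\la - A)\i(B - A)];
\]
the factor $(\la - A)\i(B - A)$ acts on $\dot X_{s+1}$ with norm $\le C|\la|\i\|B - A\|$, uniformly small in $\la \in \Ga$, and a Neumann series delivers holomorphy of $B \mapsto (\la - B)\i$ into $L(\dot X_s, \dot X_{s+1})$ with constants locally uniform in $\la$ along the contour.

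With the resolvent under control I would integrate. The factor $|\la|^{-r}$ combined with the resolvent decay and interpolation between the adjacent-scale estimates on $(\la - B)\i$ ensures absolute convergence of the integrand in $L(\dot X_s, \dot X_{s+r})$ for every admissible pair $s, s+r \in [\be, \ga+1]$, uniformly for $B$ in a neighbourhood of $A$. Holomorphy of a uniformly absolutely convergent, pointwise-holomorphic operator-valued integral then yields holomorphy of $B \mapsto B^{-r}$, and independence of the choice of $\Ga$ is standard from the resolvent identity.

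The main obstacle is the propagation of resolvent bounds through the whole fractional scale: the perturbation $B - A$ is controlled only on the two endpoint scales $\be$ and $\ga$, yet the Neumann argument has to be executed, with constants uniform in $\la \in \Ga$, at every intermediate level simultaneously. This is precisely where the bounded $\mathcal{H}^\infty$-calculus is indispensable, since it identifies the $\dot X_r$ with complex interpolation spaces between the endpoints and therefore lets one transfer the endpoint estimates on both $B - A$ and on the resolvent of $A$ to the entire scale $[\be, \ga+1]$.
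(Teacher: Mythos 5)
There is a genuine gap in the claim that the Dunford--Riesz integral converges absolutely in $L(\dot X_s,\dot X_{s+r})$. For an invertible sectorial operator one has, along the contour, $\|R_\la(B)\|_{L(\dot X_s,\dot X_s)} \le C |\la|^{-1}$ and $\|R_\la(B)\|_{L(\dot X_s,\dot X_{s+1})} \le C$ (bounded, with no decay), and interpolation between these endpoint estimates gives $\|R_\la(B)\|_{L(\dot X_s,\dot X_{s+r})} \le C|\la|^{r-1}$ for $r\in(0,1)$. Multiplying by $|\la|^{-r}$, the integrand is of size $|\la|^{-1}$ on the unbounded rays of the contour, which is not integrable. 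So for non-integer $r$ the resolvent integral does not converge absolutely in $L(\dot X_s,\dot X_{s+r})$; it only converges in $L(\dot X_s,\dot X_{<s+r})$, with a genuine loss of regularity (this is exactly the phenomenon recorded in \autoref{lem:holomorphic}.\ref{lem:holomorphic:b}--\ref{lem:holomorphic:c} of the paper). Interpolation cannot repair this, because the estimate it transfers is itself borderline-divergent at every level of the scale.

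What the paper actually does at this point is qualitatively different, and your proposal is missing its two key ingredients. First, the bounded $\mathcal H^\infty$ calculus is used not merely to identify $(\dot X_r)$ with an interpolation scale but via the Kalton--Kunstmann--Weis perturbation theorem \cite[Theorem~6.1]{kalton2006perturbation}: this theorem says that boundedness of the $\mathcal H^\infty$ calculus persists, with uniform constants, under relatively bounded perturbations measured in two distinct fractional domain scales. This step requires the perturbed operators to remain $R$-sectorial (not merely sectorial) on the whole neighborhood, which your sketch does not establish. The output is the a priori uniform bound $\sup_{B\in U}\|B^{-r}\|_{L(\dot X_s,\dot X_{s+r})}<\infty$ --- obtained from the functional calculus, \emph{not} from the resolvent integral. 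Second, with that uniform bound in hand, the paper does not try to make the Dunford integral converge at the endpoint regularity; instead it switches to a Cauchy integral in the parameter variable (\autoref{lem:holomorphic}.\ref{lem:holomorphic:d}): for a holomorphic curve $B\colon\mathbb D\to U$, one first shows $z\mapsto f(B(z))x_n$ is holomorphic into $\dot X_r$ for a dense family $x_n$, writes $f(B(z))x_n$ as a Cauchy integral over a circle in $\mathbb D$, and passes to the limit $x_n\to x$ using the uniform operator bound. This limiting argument is precisely what your interpolation step cannot replace, and without it the proof does not close.
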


This is a special case of \cref{thm:smooth} below, which is formulated for a more general class of holomorphic functions $f(B)$ instead of fractional powers $B^{-r}$. 
This \lcnamecref{thm:smooth} unifies a series of earlier results for special classes of operators and perturbations, and also strengthens them by linking the domain and range of the operator $f(B)$ to the growth or decay of the function $f$; see \cref{sec:literature:perturbations}.
The proof is based on resolvent integral representations of the functional calculus, as pioneered in the study of perturbations of eigenvalues and eigenvectors by Rellich and Kato \cite{RellichI-V,Kato76}.
The notion of R-sectoriality, which appears in the statement of the theorem, is a generalization of the more widely known notion of sectoriality, and coincides with sectoriality on Hilbert spaces.
Further key tools are perturbative results for operators with bounded $\mathcal H^\infty$ calculus \cite{denk2004new, kalton2006perturbation} and convenient calculus~\cite{KM97}.

\subsection{Context of the results}
\label{sec:literature}

This paper contributes to several different fields: Riemannian geometry on spaces of metrics, well-posedness of geometric partial differential equations, and perturbative operator theory.
We next describe these contributions in the context of previous work and highlight potential future applications and developments. 

\subsubsection{Riemannian geometry on the space of all Riemannian metrics}
\label{sec:literature:geometry}

The study of Riemannian metrics on the space $\Met(M)$ of all Riemannian metrics has a rich history. 
De~Witt \cite{DeWitt67} developed a Hamiltonian formulation of general relativity by writing down for the first time the canonical $L^2$-metric on the space of all pseudo-Riemannian metrics of fixed signature, even splitting it into the trace-free part and the trace part; see also \cite{Pekonen1987, fischer1972einstein} and the references therein. 
Motivated by these physical applications, the mathematical properties of this metric on $\Met(M)$ have been studied in detail:
Ebin \cite{Ebin70} proved the slice theorem. Freed and Groisser \cite{FreedGroisser89} described the geodesics and curvature. The article \cite{Gil-MedranoMichor91} extended this to non-compact manifolds and also described the Jacobi fields and 
the exponential mapping. This was extended to the space of non-degenerate bilinear structures on $M$
in \cite{Gil-MedranoMichorNeuwirther92} and restricted to the space of almost Hermitian structures 
in \cite{Gil-MedranoMichor94}. 
Clarke showed that the geodesic distance for the $L^2$-metric is a positive topological metric on $\Met(M)$ and determined that the metric completion of $\Met(M)$ contains degenerate and highly irregular metrics; see \cite{Clarke2010,Clarke2011,Clarke2013a,Clarke2013b}. 

This motivated the study of stronger Riemannian metrics on $\Met(M)$: Clarke \cite{Clarke2013c} first introduced a conformal deformation of the $L^2$-metric such that the degenerate zero metric does not belong to the metric completion. 
The paper \cite{Bauer2013c} then studied a natural family of stronger $\Diff(M)$-invariant Riemannian metrics on $\Met(M)$ and proved that the geodesic equation is locally well-posed for the integer order Sobolev type metrics considered there. See also the review paper of Smolentsev \cite{smolentsev2007spaces}. 
There was, however, a significant gap in the proof of ~\cite{Bauer2013c}: it was not checked if the geodesic spray extends smoothly to Sobolev completions of $\Met(M)$. 
One of the main result of the present article fills this gap (\cref{thm:perturbations})
and extends the well-posedness result from integer-order Sobolev metrics to a far more general class of metrics, including metrics of fractional order.

In addition the results of this paper  provide an integral step towards constructing a Riemannian metric on $\Met(M)$ such that the completion does not contain any degenerate metrics. This would have direct implications for Riemannian geometry on shape spaces of surfaces, where the question of completeness is still open and of fundamental importance in data-analytic applications; see e.g. the overview article \cite{bauer2014overview}. Furthermore, we hope that our results will be of use in general relativity, e.g., on the space of Riemannian metrics on a Cauchy surface containing initial conditions for solving Einstein's equation. This will require an extension of our results to non-compact manifolds $M$, which is left open for future work.

\subsubsection{Well-posedness of EPDiff equations}
\label{sec:literature:mappings}

In their pioneering work \cite{EM1970} Ebin and Marsden studied the incompressible Euler equation of fluid dynamics by viewing it in Arnold's geometric picture \cite{Arnold1966} as a geodesic equation on a group of diffeomorphisms. 
This led to a proof of local well-posedness, which has subsequently been adapted to a variety of other settings, including the Camassa--Holm \cite{camassa1993integrable,kouranbaeva1999camassa}, Constantin--Lax--Majda \cite{constantin1985simple,escher2012geometry,bauer2016geometric} and EPDiff equations~\cite{holm2005momentum}, as well as several Riemannian structures on spaces of immersions which appear in the context of shape analysis \cite{michor2007overview,Bauer2011b}.

The Ebin--Marsden approach requires an extension of the geodesic spray to a smooth vector field on appropriate Sobolev completions of sufficiently high order. 
This allows one to view the geodesic equation as a flow equation with respect to a smooth vector field, an ODE, and therefore one obtains local existence and uniqueness using the theorem of Picard--Lindel\"of. 
The main difficulty in this approach is to show that all involved operators extend smoothly to the corresponding Sobolev completions.  These operators typically depend on the foot point only via the pull-back metric. 
For this reason, the space of Riemannian metrics is the fundamental object for proving these results. 

The results of this article are the basis for establishing these smoothness properties for a wide class of operators and spaces (including fractional Sobolev metrics on diffeomorphisms, immersions, or densities) and have already led to new well-posedness results; see the follow-up article \cite{bauer2020fractional}. We want to emphasize that these results are relatively easy to prove for differential operators~\cite{EM1970,misiolek2010fredholm}, but highly non-trivial beyond this class, e.g., for pseudo differential operators; see~\cite{escher2014right,bauer2015local,bauer2020well}.
In all of these settings, the generalization from integer to fractional order metrics allows a more fine-grained look at the relation between analytic and geometric properties of Riemannian mapping spaces and their geodesic equations, following e.g.~\cite{ebin2006singularities, misiolek2010fredholm, bauer2018vanishing}. 

\subsubsection{Perturbation theory for linear operators.}
\label{sec:literature:perturbations}

The systematic study of perturbation problems for parameterized families of unbounded self-adjoint or normal operators in a Hilbert space with common domain of definition and compact resolvent has been initiated by Rellich in a series of papers~\cite{RellichI-V}; see also his monograph \cite{Rellich69}.
This theory culminated in Kato's monograph \cite{Kato76}. 
The main tool in their analysis is the resolvent integral, which allows one to obtain perturbative results for eigenvalues and eigenvectors. In particular, Rellich  showed that eigenvalues and eigenvectors can be parameterized real analytically along real analytic curves of self-adjoint operators. However, in general the eigenvalues cannot be chosen smoothly and the eigenfunctions not even continuously as functions of the operator. Nevertheless, by \cite{KMR}, the increasingly ordered eigenvalues are Lipschitz continuous. Further recent contributions can be found in \cite{AKLM98, KM03, KMR, KMRp, RainerAC, RainerN}. 

Perturbations of nonlinear functions of operators have been studied first in the context of Kato's square root problem, which comprises the identification of the domain of the square root of an operator and continuous dependence on parameters \cite{auscher2002solution,mcintosh1990square,auscher1998square,Kato76}.
While resolvent integrals are still a key tool, a major difficulty is that their convergence requires some extra regularity, which forces one to work in a weaker topology than one would optimally desire. 
This can be seen in \cref{lem:holomorphic}.\ref{lem:holomorphic:a}--\ref{lem:holomorphic:c} below and in several perturbative results for fractional powers of selfadjoint operators associated to sesquilinear forms; see e.g.~\cite[Theorem~2.5]{yagi1983differentiability} or \cite[Theorem 6.1]{axelsson2006quadratic}. 
To avoid this loss of regularity it seems necessary to impose some additional bounds on imaginary powers of operators or more general bounded holomorphic functions of operators \cite{mcintosh1985square, yagi1987applications, axelsson2006quadratic}.
For example, Dirac operators and divergence form operators can be shown to have bounded $\mathcal H^\infty$ calculus uniformly on an $L^\infty$ neighborhood of their coefficients \cite{axelsson2006quadratic, morris2012kato,  bandara2016kato, bandara2017continuity, bandara2016rough}.
In the case of general sectorial operators, neither bounded imaginary powers nor boundedness of the $\mathcal H^\infty$ functional calculus are stable under relatively bounded perturbations. 
However, boundedness of the $\mathcal H^\infty$ functional calculus is stable under perturbations in two distinct fractional domain scales \cite{kalton2006perturbation}. 
This is a key ingredient to our general result on holomorphic perturbations of the functional calculus (see \cref{thm:smooth}).

\subsection{Structure of the article}
\cref{sec:preliminaries} sets up some notation and lists some external results, which are used extensively throughout the article.
\cref{sec:laplacians} establishes the real analytic dependence of the Bochner Laplacian on the Riemannian metric. 
\cref{sec:sectorial} contains our general result on holomorphic perturbations of the functional calculus. 
\cref{sec:perturbative} applies this general result to Laplacians on closed Riemannian manifolds.
\cref{sec:metrics} shows the local well-posedness of the geodesic equation for fractional order Sobolev metrics on the manifold of all Riemannian metrics.

\section{Preliminaries}
\label{sec:preliminaries}
\subsection{Setting}
\label{sec:setting}

We use the notation of \cite{Bauer2013c} and write $\mathbb N$ for the natural numbers including zero. 
Smooth will mean $C^\infty$ and real analytic $C^\om$.
Real vector spaces and their complexifications will not be distinguished notationally.
Sobolev spaces induced by Riemannian metrics $g$ are denoted by $H^s(g)$, $s\in \mathbb R$ (see \cref{sec:domain}). 
If $g$ has finite Sobolev regularity, they coincide with the standard Sobolev spaces $H^s$ for a restricted range of $s$ (see \cref{lem:fractional}). 

Throughout this paper, without any further mention, we fix a  smooth connected closed (i.e., compact without boundary) manifold $M$ of dimension $m \in \mathbb N_{>0}$. 

\subsection{First order natural bundles}
\label{sec:natural}
A first order natural bundle over $M$ is a smooth vector bundle $E\to M$ associated to the first order frame bundle of $M$ with respect to some representation of $GL(m)$.
The completely reducible representations give rise to exactly the tensor bundles and their subbundles  which are described by symmetries: the irreducible ones corresponding to Young tableaus, possibly tensored by a bundle of  $p$-densities
$|\La^m|^p(T^*M)$ for $p\in \mathbb R$.
Examples are trivial bundles, $TM$, $T^*M$, $S^2T^*M$, $\La^kT^*M$, and the bundles following the algebraic symmetries of Riemannian curvatures. 
See \cite{KMS93} for a treatment of natural bundles and \cite{Fulton1997} for a description of Young tableaus.
\emph{In this paper, by a first order natural bundle we shall mean one which is induced by a completely reducible representation; i.e., the tensor bundles and their subbundles described above.}

\subsection{Sobolev spaces}
\label{sec:sobolev}

We write $H^s(\mathbb R^m,\mathbb R^n)$ for the Sobolev space of order $s\in\mathbb R$ of $\mathbb R^n$-valued functions on $\mathbb R^m$. 
We will now generalize these spaces to sections of vector bundles.  
Let $E$ be a vector bundle of rank $n\in\mathbb N_{>0}$ over $M$ and let $\Ga(E)$ denote the corresponding space of (smooth) sections. 
We choose a finite vector bundle atlas and a subordinate partition of unity in the following way. 
Let $(u_i:U_i \to u_i(U_i)\subseteq \mathbb R^m)_{i\in I}$ be a finite atlas for $M$, 
let $(\ph_i)_{i\in I}$ be a smooth partition of unity subordinated to $(U_i)_{i \in I}$, and let $\ps_i:E|U_i \to U_i\x \mathbb R^n$ be vector bundle charts. 
Note that we can choose open sets $U_i^\o$ such that $\on{supp}(\ps_i)\subset U_i^\o\subset \overline{U_i^\o}\subset U_i$  and each $u_i(U_i^\o)$ is an open set in $\mathbb R^m$ with Lipschitz boundary
(cf.\@ \cite[Appendix~H3]{behzadan2017certain}).
Then we define for each $s \in \mathbb R$ and $f \in \Ga(E)$
\begin{equation*}
\|f\|_{\Ga_{H^s}(E)}^2 := \sum_{i \in I} \|\on{pr}_{\mathbb R^n}\o\, \ps_i\o (\ph_i \cdot f)\o u_i\i \|_{H^s(\mathbb R^m,\mathbb R^n)}^2,
\end{equation*}
where $\on{pr}_{\mathbb R^n}$ denotes the projection onto $\mathbb R^n\subset \mathbb R^m$.
Then $\|\cdot\|_{\Ga_{H^s}(E)}$ is a norm, which comes from a scalar product, and we write $\Ga_{H^s}(E)$ for the Hilbert completion of $\Ga(E)$ under the norm. 
It turns out that $\Ga_{H^s}(E)$ is independent of the choice of atlas and partition of unity, up to equivalence of norms. We refer to \cite[Section~7]{triebel1992theory2} and \cite[Section~6.2]{grosse2013sobolev} for further details. 

In this article we only consider Sobolev spaces $H^{\al}$. Most of the results carry over with suitable modifications to other scales of complex interpolation spaces, including scales of Bessel potential spaces $H^{\al,p}$. 
Another possible generalization is to replace the compact manifold $M$ by an open manifold and use Sobolev spaces measured by a smooth background Riemannian metric of bounded geometry on $M$, in the spirit of Eichhorn \cite{Eichhorn2007}.

\begin{theorem}[Module properties of Sobolev spaces]
\label{thm:module}
Let $E_1,E_2$ be vector bundles over $M$
and let $s_1,s_2,s\in\mathbb R$ satisfy  
\begin{enumerate}
\item
\label{thm:module:a}
$s_1+s_2\geq 0$, $\min(s_1,s_2)\geq s$, and $s_1+s_2-s>\frac m 2$, or
\item 
\label{thm:module:b}
$s \in \mathbb N$, $\min(s_1,s_2)>s$, and $s_1+s_2-s\geq\frac m 2$, or
\item 
\label{thm:module:c}
$-s_1 \in \mathbb N$ or $-s_2 \in \mathbb N$, $s_1+s_2>0$, $\min(s_1,s_2)>s$, $s_1+s_2-s \geq \frac{m}{2}$.
\end{enumerate}
Then the tensor product of smooth sections extends to a bounded bilinear mapping 
\begin{equation*}
\Ga_{H^{s_1}}(E_1)\x \Ga_{H^{s_2}}(E_2) \to \Ga_{H^{s}}(E_1\otimes E_2).
\end{equation*}
\end{theorem}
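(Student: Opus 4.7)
The plan is to reduce the statement to the classical multiplication estimate on $\mathbb R^m$ and invoke the corresponding scalar result. The definition of $\Ga_{H^s}(E)$ in \autoref{sec:sobolev} is set up so that localization works cleanly: given $f\in\Ga_{H^{s_1}}(E_1)$ and $g\in\Ga_{H^{s_2}}(E_2)$, on each chart $U_i$ the tensor product $f\otimes g$ is expressed in the local trivialization as componentwise products of scalar functions. By a mild refinement of the atlas and partition of unity (replacing $\ph_i$ by $\ph_i^2$ and using an enlarged cover to dominate the norm of $f\otimes g$), the estimate of $\|f\otimes g\|_{\Ga_{H^s}(E_1\otimes E_2)}$ reduces to estimates of products $(\ph_i f_i^a)(\ph_i g_i^b)$ in $H^s(\mathbb R^m)$, with $f_i^a,g_i^b$ the scalar components in the charts.

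Thus the essential task is the scalar Euclidean multiplication theorem: under hypotheses \ref{thm:module:a}, \ref{thm:module:b}, or \ref{thm:module:c}, multiplication extends to a bounded bilinear map $H^{s_1}(\mathbb R^m)\x H^{s_2}(\mathbb R^m)\to H^s(\mathbb R^m)$. For this I would use Bony's paraproduct decomposition $fg=T_fg+T_gf+R(f,g)$ combined with Littlewood--Paley characterizations of Bessel potential spaces. In case \ref{thm:module:a} the strict inequality $s_1+s_2-s>m/2$ provides a margin that absorbs logarithmic losses from the remainder term $R(f,g)$ via Sobolev embedding into $L^\infty$. In case \ref{thm:module:b}, where $s\in\mathbb N$ and equality $s_1+s_2-s=m/2$ is allowed, one uses the Leibniz rule $D^\al(fg)=\sum_{\be+\ga=\al}c_{\be\ga}D^\be f\cdot D^\ga g$ to reduce to products of factors of strictly lower order, which fall under case \ref{thm:module:a}. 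Case \ref{thm:module:c}, involving a nonpositive integer regularity, follows by duality: if $-s_1\in\mathbb N$, then multiplication $H^{s_1}\x H^{s_2}\to H^s$ is dual to multiplication $H^{s_2}\x H^{-s}\to H^{-s_1}$, whose exponents satisfy the hypotheses of case \ref{thm:module:b}.

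The main obstacle I anticipate is careful bookkeeping in the borderline regimes \ref{thm:module:b} and \ref{thm:module:c}, where the strict inequality fails: one must verify that after the Leibniz or duality reduction the exchanged exponents indeed satisfy the strict hypotheses of \ref{thm:module:a}, and that the condition $s_1+s_2\geq 0$ (resp.\ $s_1+s_2>0$) is used to exclude degenerate configurations. A cleaner route that avoids redoing any paraproduct analysis is to invoke the sharp scalar multiplication theorems from \cite{behzadan2017certain} or \cite{triebel1992theory2} directly; the localization argument above then yields the vector bundle statement with only routine additional work to handle cutoffs, atlas changes, and the fact that contractions of tensor components preserve the target space $E_1\otimes E_2$.
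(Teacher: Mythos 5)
Your localization step and the ``cleaner route'' of citing \cite{behzadan2015multiplication} or \cite{zolesio1977multiplication} directly is essentially what the paper does: after reducing to compactly supported functions on $\mathbb R^m$, the paper quotes specific scalar multiplication theorems for each of the three cases (and uses duality to pass between some sub-cases of~\ref{thm:module:a}).

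However, the primary argument you propose has a gap in the reduction from case~\ref{thm:module:b} to case~\ref{thm:module:a}. The Leibniz rule reduces $\|fg\|_{H^s}$ to estimating the products $D^\be f\cdot D^\ga g$ in $L^2$ with $|\be|+|\ga|\leq s$. For the top-order terms $|\be|+|\ga|=s$, one needs a bounded multiplication
\begin{equation*}
H^{s_1-|\be|}(\mathbb R^m)\times H^{s_2-|\ga|}(\mathbb R^m)\to L^2(\mathbb R^m),
\end{equation*}
and the relevant exponent sum is $(s_1-|\be|)+(s_2-|\ga|)-0=s_1+s_2-s$, which is \emph{unchanged} by the reduction. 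In the borderline case $s_1+s_2-s=m/2$ this equals $m/2$ exactly, so the triple $(s_1-|\be|,\,s_2-|\ga|,\,0)$ does \emph{not} satisfy the strict inequality in~\ref{thm:module:a}, and case~\ref{thm:module:a} cannot be invoked. The same defect then propagates through your duality reduction from~\ref{thm:module:c} to~\ref{thm:module:b}. To close this you would need a separate borderline lemma, namely that $H^a\times H^b\to L^2$ is bounded when $a+b=m/2$ and $\min(a,b)>0$; this does hold (both $a,b<m/2$, so Sobolev embedding gives $H^a\hookrightarrow L^{2m/(m-2a)}$, $H^b\hookrightarrow L^{2m/(m-2b)}$, and H\"older with $\tfrac{m-2a}{2m}+\tfrac{m-2b}{2m}=\tfrac12$ finishes it), but it is not a consequence of~\ref{thm:module:a} and must be proved or cited on its own. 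With that supplementary lemma in hand your Leibniz and duality reductions go through; without it the argument as written does not cover the critical exponent that cases~\ref{thm:module:b} and~\ref{thm:module:c} are specifically designed to reach.
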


\begin{proof}
Recall that $H^s=W^{s,2}=H^{s,2}$. 
Thanks to the local description of Sobolev spaces in  \cref{sec:sobolev} it suffices to consider compactly supported functions and distributions on $\mathbb R^m$. 
The sufficiency of condition~\ref{thm:module:a} in the case $s\geq 0$ follows from \cite[Th\'eor\`eme~2]{zolesio1977multiplication} or \cite[Theorem~5.1]{behzadan2015multiplication} or \cite[Theorem~7.3]{behzadan2015multiplication}.
Duality allows one to replace $(s_1,s_2,s)$ by $(s_1,-s,-s_2)$ or $(-s,s_2,-s_1)$, which implies the sufficiency of condition~\ref{thm:module:a} in the case $\min(s_1,s_2)\leq 0$. 
The sufficiency of condition~\ref{thm:module:a} in the remaining case $s<0<\min(s_1,s_2)$ has been shown in \cite[Theorem~8.3]{behzadan2015multiplication}.
The sufficiency of condition~\ref{thm:module:b} follows from \cite[Th\'eor\`eme~2]{zolesio1977multiplication} or \cite[Theorem~6.1]{behzadan2015multiplication}.
The sufficiency of condition~\ref{thm:module:c} follows by duality from condition~\ref{thm:module:b}.
\end{proof} 

Note that the conditions of the above \lcnamecref{thm:module} are invariant under multiplication and duality. Indeed, letting $p(s_1,s)$ denote the set of all $s_2$ such that $(s_1,s_2,s)$ satisfies condition \ref{thm:module:a}, \ref{thm:module:b}, or \ref{thm:module:c} of \cref{thm:module}, one easily verifies that the following statements hold for all $r,s,t \in \mathbb R$: 
\begin{itemize}
\item If $\al \in p(r,s) $ and $\beta \in p(s,t)$, then $\min(\al,\beta) \in p(r,t)$, and the tensor product of smooth sections extends to a bounded bilinear mapping 
\begin{equation*}
\Ga_{H^{\al}}(E_1)\x \Ga_{H^{\beta}}(E_2) \to \Ga_{H^{\min(\al,\beta)}}(E_1\otimes E_2).
\end{equation*}

\item If $\beta \in p(r,s)$, then $\beta \in p(-s,-r)$. 
\end{itemize}	

\subsection{Convenient calculus}
\label{sec:convenient}

We will make essential use of convenient calculus as developed in \cite{FK88} and \cite{KM97}. 
A locally convex vector space $X$ is called {\sl convenient} if each Mackey Cauchy sequence has a limit; equivalently, if for each smooth curve $c\colon\mathbb R\to X$ the Riemann integral $\int_0^1 c(t)\, dt$ converges.
This property and those mentioned below depend only on the system of bounded sets in $X$. 
Every Banach and Fr\'echet space is convenient. 
Moreover, by \cite[Theorem~2.15]{KM97} the following constructions preserve convenient vector spaces: limits, direct sums, and strict inductive limits of closed embeddings; 
this is needed in \cref{lem:holomorphic}.
Mappings between convenient vector spaces are called smooth if they map smooth curves to smooth curves. 
A smooth mapping is real analytic if it is real analytic along each affine line. 
A mapping is holomorphic if it is holomorphic along each holomorphic map from the unit disk in $\mathbb C$ to $X$, or even along each affine complex line. 
 
We will make essential use of the following properties \cite{KM97}.
\begin{enumerate}
\item 
\label{sec:convenient:a}
Smooth (or real analytic or holomorphic) curves can be recognized if they remain so after applying each bounded linear functional in a subset of the dual which is large enough to recognize bounded subsets.
\item 
\label{sec:convenient:b}
Convenient smoothness coincides with all other notions of $C^\infty$ up to Fr\'echet spaces. Moreover, convenient real analyticity and holomorphicity coincides with all other notions of $C^\om$ and $\mathcal H^\infty$ up to Banach spaces. 
\item
\label{sec:convenient:c}
 If $X$ and $Y$ are convenient, then the space $L(X,Y)$ of bounded linear operators between $X$ and $Y$ is convenient. Moreover, the following uniform boundedness theorem hold true: an $L(X,Y)$-valued map is smooth if and only if all its evaluations against $x \in X$ are smooth. Similar statements hold with smooth replaced by real analytic; this is called real analytic uniform boundedness theorem~ \cite[Theorem~11.14]{KM97}. This follows from \ref{sec:convenient:a} and the classical linear uniform boundedness theorem.
\end{enumerate}

\section{Laplacians associated to Riemannian metrics of finite Sobolev regularity} 
\label{sec:laplacians}

This section develops the theory of Riemannian metrics of finite Sobolev regularity and their induced Laplacians on general first order natural vector bundles, which are exactly tensor bundles with symmetries. 
The main results are on functional analytic properties of the Laplacians and on real analytic perturbations of the metric. 

\subsection{Metrics of Sobolev order}
\label{sec:sobolev_metrics}

The bundle of symmetric covariant two-tensors is denoted by $S^2T^*M$ and the subbundle of positive definite tensors by $S^2_+T^*M$. 
Then the space of smooth Riemannian metrics is the space $\Ga(S^2_+T^*M)$ of smooth sections.
Moreover, for any $\alpha \in (m/2,\infty)$, the space of Riemannian metrics of Sobolev regularity $\alpha$ is the space of $H^\al$-sections:
\begin{equation*}
\operatorname{Met}_{H^\al}(M):= \Ga_{H^\al}(S^2_+T^*M).
\end{equation*}
This is well-defined because the condition $\al>m/2$ ensures that the tensors in $\Ga_{H^\al}(S^2T^*M)$ are continuous and that $\operatorname{Met}_{H^\al}(M)$ is an open subset of the space $\Ga_{H^\al}(S^2T^*M)$. 
More generally, a fiber metric of regularity $\al$ on a vector bundle $E$ is an element of $\Ga_{H^\al}(S^2_+E^*)$.

\begin{lemma}[Inverse metric]
\label{lem:inverse}
For any $\al\in(m/2,\infty)$, the inverse metric is well-defined and real analytic as a mapping
\begin{equation*}
\Met_{H^\al}(M)\ni g \mapsto g\i \in \Ga_{H^\al}(S^2_+TM).
\end{equation*}
\end{lemma}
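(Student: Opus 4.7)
The plan is to expand the pointwise matrix inversion in a Neumann series around a reference metric and to bootstrap the argument using density of smooth metrics. Fix $g_0\in\Met_{H^\al}(M)$ and consider a perturbation $h\in\Ga_{H^\al}(S^2T^*M)$; the pointwise matrix-algebra identity
\[
(g_0+h)\i \;=\; \sum_{k\ge 0} (-1)^k\, g_0\i\,(h\cdot g_0\i)^k
\]
expresses the perturbed inverse as a formal power series in $h$, where all products denote the fiberwise tensor contractions that reduce in coordinates to matrix multiplication, so each summand is a section of $S^2TM$. Applying \autoref{thm:module} with $s_1=s_2=s=\al$ (permitted since $\al>m/2$), the tensor-contraction product is continuous bilinear on $\Ga_{H^\al}$, so the $k$-th term is bounded in $\Ga_{H^\al}$-norm by a constant times $\|g_0\i\|_{H^\al}^{k+1}\|h\|_{H^\al}^k$, and the series converges absolutely in $\Ga_{H^\al}(S^2TM)$ as soon as $\|h\|_{H^\al}$ is sufficiently small.

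The continuous embedding $\Ga_{H^\al}\hookrightarrow\Ga_{C^0}$, valid again by $\al>m/2$, both justifies openness of $\Met_{H^\al}(M)$ in $\Ga_{H^\al}(S^2T^*M)$ and ensures that $g_0+h$ remains pointwise positive definite for small $h$. Pointwise uniqueness of the matrix inverse then identifies the sum of the series with the true inverse $(g_0+h)\i$, which therefore lies in $\Ga_{H^\al}(S^2_+TM)$. Since the identity displays $g\mapsto g\i$ as a convergent power series in $g-g_0$ with values in the Banach space $\Ga_{H^\al}(S^2TM)$, real analyticity at $g_0$ follows directly from the definition.

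The one point that does not come out of the algebra alone is that the argument presupposes $g_0\i\in\Ga_{H^\al}(S^2TM)$ in order to bound the initial factor of the Neumann series. To fix this I would bootstrap from smooth metrics: if $g_0$ is smooth, then $g_0\i$ is smooth and a fortiori in $\Ga_{H^\al}$, and the series argument applies verbatim, simultaneously showing that $g\i\in\Ga_{H^\al}(S^2_+TM)$ and that $g\mapsto g\i$ is real analytic on an $H^\al$-neighborhood of $g_0$. Because smooth sections are dense in $\Ga_{H^\al}(S^2T^*M)$, every $g\in\Met_{H^\al}(M)$ lies in such a neighborhood of some smooth metric, and hence $g\i\in\Ga_{H^\al}(S^2_+TM)$ for all $g\in\Met_{H^\al}(M)$. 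Once this $H^\al$-regularity of the inverse is secured throughout, the Neumann series expansion runs unchanged at every base point and yields real analyticity on all of $\Met_{H^\al}(M)$. The main obstacle is precisely this bootstrap step, since the algebraic identity alone provides $H^\al$-regularity of the inverse only modulo the $H^\al$-regularity of the leading term.
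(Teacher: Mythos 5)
Your approach is genuinely different from the paper's: where the paper writes down the implicit equation $\on{Tr}_{2,3}(g\otimes g\i)=\on{Id}_{TM}$, checks that the left-hand side is real analytic in $(g,g\i)$ via \autoref{thm:module}, and invokes the real analytic implicit function theorem on Banach spaces, you expand $(g_0+h)\i$ as a Neumann series and sum it in $\Ga_{H^\al}$ directly. The algebraic core is the same (the Neumann series is precisely the power series the IFT produces), but your route makes the analyticity visible at the level of explicit estimates rather than delegating it to the IFT, which is a legitimate and somewhat more elementary presentation.

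There is, however, a genuine gap in the bootstrap step, and it is exactly at the point you flag as the main obstacle. The radius of convergence of your Neumann series around $g_0$ is of order $\bigl(C^2\|g_0\i\|_{H^\al}\bigr)\i$, where $C$ is the module constant from \autoref{thm:module}. Density of smooth metrics in $\Met_{H^\al}(M)$ tells you that you can find smooth $g_0$ with $\|g-g_0\|_{H^\al}$ as small as you like, but it gives you no control on $\|g_0\i\|_{H^\al}$ as $g_0\to g$; that quantity could in principle blow up, in which case the radius of the ball around $g_0$ shrinks faster than $\|g-g_0\|_{H^\al}$ and $g$ is never captured. The statement \emph{every $g$ lies in such a neighborhood of some smooth metric} is precisely what needs to be proved, and density alone does not deliver it. (The same issue is hidden in the open-and-closed / continuity-along-segments variants of this argument: openness of $\{g:g\i\in H^\al\}$ comes for free from your series, but closedness again requires a uniform bound on $\|g_n\i\|_{H^\al}$.)

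The gap can be closed, but it needs one more analytic ingredient, and it is worth being explicit about which one. Either (i) invoke an a priori Moser-type composition bound $\|g\i\|_{H^\al}\le C\bigl(\|g\i\|_{C^0}\bigr)\,\bigl(1+\|g\|_{H^\al}\bigr)$, which is uniform in $g$ and immediately gives a uniform lower bound on the Neumann radius along any approximating sequence of smooth metrics; or (ii) replace the crude Banach-algebra bound $\|fk\|_{H^\al}\lesssim\|f\|_{H^\al}\|k\|_{H^\al}$ by the sharper Moser product estimate $\|fk\|_{H^\al}\lesssim\|f\|_{C^0}\|k\|_{H^\al}+\|f\|_{H^\al}\|k\|_{C^0}$, under which the $k$-th Neumann term is bounded by roughly $k\,\|h g_0\i\|_{C^0}^{\,k-1}$ times a fixed $H^\al$ factor, so that the radius of convergence depends only on $\|h\|_{C^0}$ (which \emph{is} controlled by $\|h\|_{H^\al}$ and the Sobolev embedding) and not on $\|g_0\i\|_{H^\al}$. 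Neither of these is supplied by \autoref{thm:module} as stated, so as written your proof does not quite close; once one of them is added, the argument is complete and is a nice alternative to the IFT route.
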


\begin{proof}
The inverse metric satisfies the implicit equation $\on{Tr}_{2,3}(g\otimes g\i)=\on{Id}_{TM}$, where the trace $\on{Tr}_{2,3}$ contracts the second and third tensor slot. The left-hand side of this implicit equation is real analytic in $g$ and $g\i$ because the tensor product 
\begin{align*}
\otimes\colon \Ga_{H^\al}(T^*M\otimes T^*M) \x \Ga_{H^\al}(TM\otimes TM) &\to \Ga_{H^\al}(T^*M\otimes T^*M \otimes TM\otimes TM)
\end{align*}
is bounded bilinear by the module property of \cref{thm:module}, and the trace 
\begin{align*}
\on{Tr}_{2,3}\colon \Ga_{H^\al}(T^*M\otimes T^*M \otimes TM\otimes TM) \to \Ga_{H^\al}(T^*M\otimes TM)
\end{align*}
is bounded linear. Thus, it follows from the real analytic implicit function theorem for Banach spaces that $g\mapsto g\i$ is real analytic.
\end{proof}

\begin{lemma}[Volume form and duality]
\label{lem:volume}
Let $\al\in(m/2,\infty)$, 
and let $\Vol M=|\La^m|(T^*M)$ denote the volume bundle.
Then the following statements holds:
\begin{enumerate}
\item
\label{lem:volume:a}
The Riemannian volume form is well-defined and real analytic as a mapping
\begin{equation*}
\Met_{H^\al}(M) \ni g \mapsto \vol^g \in \Ga_{H^\al}(\Vol M).
\end{equation*}

\item 
\label{lem:volume:b}
For any $g \in \Met_{H^\al}(M)$, the pairing $(h,k)\mapsto \int g(h,k)\vol^g$ extends for all $s \in [-\al,\al]$ to a bounded bilinear map 
\begin{equation*}
\langle\cdot,\cdot\rangle_{H^0(g)}\colon \Ga_{H^s}(E)\times\Ga_{H^{-s}}(E)\to\mathbb R,
\end{equation*}
which induces a topological isomorphism $\Ga_{H^{-s}}(E) \to (\Ga_{H^s}(E))^*$, called the $H^0(g)$-duality. 

\item This duality is real analytic as a mapping
\begin{equation*}
\Met_{H^\al}(M) \ni g \mapsto \langle\cdot,\cdot\rangle_{H^0(g)} \in L(\Ga_{H^{-s}}(E), (\Ga_{H^s}(E))^*).
\end{equation*}
\end{enumerate}
\end{lemma}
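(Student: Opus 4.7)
For part~(a), I work locally in a chart, where $\vol^g = \sqrt{\det(g_{ij})}\,dx^1\wedge\cdots\wedge dx^m$. Since $\al > m/2$, \autoref{thm:module} makes $H^\al$ into a Banach algebra, so $g \mapsto \det(g_{ij})$ is polynomial and hence real analytic into $H^\al$. The embedding $H^\al \hookrightarrow C^0$ together with pointwise positivity of $\det g$ yields a uniform positive lower bound, and the real analytic implicit function theorem applied to $u^2 = \det g$---whose derivative $v \mapsto 2uv$ is multiplication by an $H^\al$-function bounded away from zero, hence invertible on the Banach algebra $H^\al$---gives real analyticity of $g \mapsto \sqrt{\det g}$. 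A partition of unity argument as in \autoref{sec:sobolev} completes~(a). This is the chief technical point of the lemma, but it exactly parallels the treatment of the inverse in \autoref{lem:inverse}.

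For parts~(b) and~(c), I fix a smooth reference metric $\hat g$ on $M$ and rewrite the pairing on smooth sections as
\begin{equation*}
\int_M g_E(h,k)\,\vol^g \;=\; \int_M \hat g_E\bigl(A(g)\,h,\,k\bigr)\,\vol^{\hat g},\qquad A(g) \;:=\; \frac{\vol^g}{\vol^{\hat g}}\;\hat g_E\i\, g_E \;\in\; \Ga(\on{End}(E)),
\end{equation*}
where $g_E$ is the fiber metric on the natural bundle $E$ induced by $g$. Since $E$ is first order natural, $g_E$ is polynomial in the components of $g$ and $g\i$, so \autoref{lem:inverse} together with \autoref{thm:module} makes $g \mapsto g_E$ real analytic into $\Ga_{H^\al}(S^2 E^*)$; combined with~(a), the map $g \mapsto A(g)$ is then real analytic into $\Ga_{H^\al}(\on{End}(E))$. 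Pointwise $A(g)$ is invertible, with inverse $A(g)\i = (\vol^{\hat g}/\vol^g)\,g_E\i\,\hat g_E$ again in $\Ga_{H^\al}(\on{End}(E))$ and real analytic in $g$ by the same argument.

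Now \autoref{thm:module} applied with $(s_1,s_2,s_3) = (\al,s,s)$---whose condition~(a) reduces under $\al > m/2$ to $|s| \le \al$---shows that multiplication by an $H^\al$-section of $\on{End}(E)$ extends to a bounded operator on $\Ga_{H^s}(E)$ for every $s \in [-\al,\al]$, and by the previous paragraph $A(g)$ is a topological automorphism of $\Ga_{H^s}(E)$. The classical Sobolev duality $\Ga_{H^{-s}}(E) \cong (\Ga_{H^s}(E))^*$ induced by $\hat g$ is a topological isomorphism for every $s \in \R$, inherited via the chartwise definition of the Sobolev norms from the scalar $H^s$--$H^{-s}$ duality on $\R^m$; composing it with the multiplication by $A(g)$ yields the bounded bilinear pairing and the topological isomorphism claimed in~(b). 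For part~(c), since $g \mapsto A(g)$ is real analytic into $\Ga_{H^\al}(\on{End}(E))$ and the multiplication map $\Ga_{H^\al}(\on{End}(E)) \times \Ga_{H^{-s}}(E) \to \Ga_{H^{-s}}(E)$ is bounded bilinear, the induced operator-valued map $g \mapsto (k \mapsto A(g)k)$ into $L(\Ga_{H^{-s}}(E))$ is real analytic; post-composition with the fixed $\hat g$-duality is bounded linear, giving the desired real analyticity.
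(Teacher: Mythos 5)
Your proposal is correct and follows essentially the same route as the paper: part (a) by the local formula $\vol^g=\sqrt{\det g}\,dx$ together with module properties and the real analytic implicit function theorem for the square root, and parts (b)–(c) by rewriting the $g$-pairing against a fixed smooth $\hat g$-pairing through multiplication by the operator $A(g)=\frac{\vol^g}{\vol^{\hat g}}\,\hat g_E^{-1}g_E$ and using the module theorem to show $A(g)$ is a topological automorphism of $\Ga_{H^s}(E)$ depending real analytically on $g$. Your treatment is slightly more explicit about the induced fiber metric $g_E$ (which the paper only formalizes afterwards in its Lemma on induced fiber metrics, leaning on an abuse of notation here), but the decomposition, the reference-metric trick, and the invocation of module properties are identical.
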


\begin{proof}
\begin{enumerate}[wide]
\item As explained in \cref{sec:sobolev}, the space of $H^\al$-sections is described locally. Thus, we may replace $M$ by an open subset $U$ of $\mathbb R^m$ and use the following expression of the volume form in local coordinates $(x^1,\dots,x^m)$:
\begin{equation*}
\vol^g = \sqrt{\operatorname{det} \big((g_{i,j})_{i,j=1}^m\big)} dx^1\dots dx^m. 
\end{equation*}
The determinant $\Met_{H^\al}(U)\ni g\mapsto \det(g_{i,j})_{i,j=1}^m \in H^\al(U)$ is real analytic by the module property of \cref{thm:module}. 
The square root $H^\al(U,\mathbb R_{>0})\ni f\mapsto \sqrt{f}\in  H^\al(U,\mathbb R_{>0})$ is real analytic, again by the real analytic implicit function theorem on Banach spaces. 
Therefore, $g\mapsto\vol^g$ is real analytic, as claimed.

\item
The statement holds for any smooth Riemannian metric $\hat g \in \Met(M)$. 
The pairings induced by $g$ and $\hat g$ are related as follows: for any $h,k\in\Ga_{H^s}(E)$,
\begin{equation*}
\int_M g(h,k)\vol^g = \int_M \hat g\left(\frac{\vol^g}{\vol^{\hat g}}\ \hat g^{-1}gh,k\right)\vol^{\hat g}.
\end{equation*}
Here $\vol^g/\vol^{\hat g} \in H^\al(M,\mathbb R)$ denotes the Radon-Nikodym derivative, whose coordinate expression can be seen from \ref{lem:volume:a}.
The linear operator 
\begin{equation*}
\Ga_{H^s}(E) \ni h\mapsto \frac{\vol^g}{\vol^{\hat g}}\ \hat g^{-1}gh \in \Ga_{H^s}(E).
\end{equation*}
is bounded with bounded inverse by \cref{thm:module}. 
This proves \ref{lem:volume:b}.

\item The operator in the last displayed equation depends real analytically on $g \in \Met_{H^\al}(M)$.
\qedhere
\end{enumerate}
\end{proof}

The following \lcnamecref{lem:fiber} generalizes the constructions of \cref{lem:inverse,lem:volume} to arbitrary first order natural bundles.

\begin{lemma}[Induced fiber metrics]
\label{lem:fiber}
Let $g \in \Met_{H^\al}(M)$ be a Riemannian metric of Sobolev regularity $\al\in(m/2,\infty)$, and let $E$ be a first order natural bundle over $M$. 
\begin{enumerate}
\item 
\label{lem:fiber:a}
The metric $g$ induces a canonical fiber metric of class $H^\al$ on $E$ (up to the choice of some constants). 

\item 
\label{lem:fiber:b}
The fiber metric can be chosen real analytically in $g$, yielding a real analytic map $\Met_{H^\al}(M)\to \Ga_{H^\al}(S^2_+E^*)$. 

\item 
\label{lem:fiber:c}
If $E$ is trivial, then the fiber metric is of class $C^\infty$ and does not depend on $g$.
\end{enumerate}
\end{lemma}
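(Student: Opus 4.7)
The plan is to reduce to the irreducible first order natural bundles described in \autoref{sec:natural} and then assemble the fiber metric out of $g$, $g\i$ and $\vol^g$, using the real analyticity of these three ingredients (Lemmas \ref{lem:inverse} and \ref{lem:volume}) together with the module property (\autoref{thm:module}).

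First I would invoke the classification of first order natural bundles: each irreducible one has the form $E_Y \otimes |\La^m|^p(T^*M)$, where $E_Y$ is the image in $\bigotimes^a TM \otimes \bigotimes^b T^*M$ of a fixed Young symmetrizer $Y$, and $p \in \mathbb R$; a general one is a direct sum of such. Since both direct sums and tensor products of fiber metrics are again fiber metrics, it suffices to construct a fiber metric on each irreducible factor. On $\bigotimes^a TM \otimes \bigotimes^b T^*M$ I take the canonical tensor fiber metric $g^{\otimes a} \otimes (g\i)^{\otimes b}$ and restrict it to $E_Y$; positivity is preserved under restriction to a subbundle. On the density factor $|\La^m|^p(T^*M)$ I take the unique fiber metric making the distinguished section $(\vol^g)^p$ have squared norm one; in a local trivialization by a smooth background density this is a smooth nonzero multiple of $(f^g)^{-2p}$, where $f^g \in H^\al(M, \mathbb R_{>0})$ is the positive density coefficient of $\vol^g$. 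Tensoring over irreducible components and summing over a direct sum decomposition of $E$ yields the desired canonical fiber metric, determined by $g$ up to positive constants on each irreducible component, proving \ref{lem:fiber:a}.

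Real analyticity in $g$ then follows by assembling real analytic operations already at our disposal. The inversion $g \mapsto g\i$ is real analytic by \autoref{lem:inverse}; iterated tensor products of $g$ and $g\i$ are real analytic because the tensor product is bounded multilinear on $\Ga_{H^\al}$ by \autoref{thm:module}\ref{thm:module:a} (applicable since $\al > m/2$); the Young projector $Y$ is a fixed fiberwise linear operation and acts boundedly on $H^\al$-sections; $g \mapsto \vol^g$ is real analytic by \autoref{lem:volume}\ref{lem:volume:a}; and the map $f^g \mapsto (f^g)^{-2p}$ on positive $H^\al$-functions is real analytic by the same real analytic implicit function theorem argument used for the square root in the proof of \autoref{lem:volume}. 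Combining these finitely many operations gives a real analytic map $\Met_{H^\al}(M) \to \Ga_{H^\al}(S^2_+E^*)$, which is \ref{lem:fiber:b}.

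Finally, \ref{lem:fiber:c} is immediate, since a trivial first order natural bundle corresponds to the trivial $GL(m)$-representation and therefore carries a canonical constant fiberwise Euclidean structure that is smooth and independent of $g$. The main obstacle I anticipate is essentially bookkeeping: formalizing the Young symmetrizer decomposition cleanly and handling non-integer real powers of $\vol^g$ via the implicit function theorem; however, each step in the construction is either a bounded multilinear operation on $\Ga_{H^\al}$ or an invocation of an already established real analyticity result.
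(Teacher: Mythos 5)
Your proposal follows essentially the same route as the paper's proof: decompose $E$ into irreducible summands of the form $E_Y \otimes |\La^m|^p(T^*M)$, use the canonical metric $g^{\otimes a}\otimes(g\i)^{\otimes b}$ restricted to the Young-symmetrizer subbundle for the tensor part, use a power of $\vol^g$ for the density line, and combine these with the real analyticity of $g\mapsto g\i$ (\autoref{lem:inverse}), $g\mapsto\vol^g$ (\autoref{lem:volume}), and the module property (\autoref{thm:module}). Your argument for part \ref{lem:fiber:c} is also identical.

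There is, however, one gap. You justify the real analyticity of $f^g\mapsto (f^g)^{-2p}$ on positive $H^\al$-functions by invoking ``the same real analytic implicit function theorem argument used for the square root.'' That argument works for $\sqrt{\cdot}$ because the square root is characterized by the polynomial identity $h^2=f$, giving an analytic implicit equation to which the Banach-space implicit function theorem applies. For an arbitrary real exponent $-2p$ there is no such polynomial (or even algebraic) relation between $f$ and $f^{-2p}$, so the implicit function theorem does not directly yield real analyticity unless $p$ is rational. The paper instead argues that the power series of $t\mapsto t^{-2p}$ around a positive value converges on a sup-norm open neighborhood, and that sup-norm open sets are $H^\al$-open by the Sobolev embedding theorem (using $\al>m/2$), hence the series converges in $\Ga_{H^\al}$ and defines a real analytic map. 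This power series argument is the correct route for arbitrary real $p$; replacing your implicit function theorem claim with it closes the gap.
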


\begin{proof}
\begin{enumerate}[wide]
\item If $E=T^r_sM$ is a tensor bundle of contravariant rank $r\in\mathbb N$ and covariant rank $s\in \mathbb N$, then $E$ inherits the canonical metric $g^{\otimes r}\otimes (g^{-1})^{\otimes s}$ from $T^r_sM$.
More generally, if $E$ is a subbundle of $T^r_sM$ described by some symmetries (cf.~\cref{sec:natural}), then the canonical metric is the restriction of $g^{\otimes r}\otimes (g^{-1})^{\otimes s}$ to $E$. 
On the line bundle $|\La^m|^p(T^*M)$ of $p$-densities, $(\vol^g)^{-2p}$ is the induced metric.
In general, $E$ can be identified with a direct sum of tensor bundles with symmetries, each tensored with a line bundle of $p$-densities,
and the canonical metric is defined accordingly as a direct sum of metrics as above. 
Different identifications lead to metrics which coincide up to a constant on each irreducible component of the representation describing $E$ as an associated bundle. 
In any case, the canonical fiber metric is of class $H^\al$ if $g$ is of class $H^\al$ thanks to the module property of Sobolev spaces, \cref{lem:volume}, and the assumption that $\al$ is above the Sobolev threshold $m/2$. 

\item 
We identify $E$ with a tensor bundle with symmetries and choose the fiber metric as in the first part of \ref{lem:fiber:a}, thereby eliminating the non-uniqueness. 
The multilinear algebra described there reduces the statement to $g\i$ and $(\vol^g)^{-2p}$. 
But real analyticity of the inverse metric and volume form has been shown in \cref{lem:inverse,lem:volume}.
Moreover, the map
\begin{equation*}
\Ga_{H^\al}(|\La^m|(T^*M))\ni\vol^g\mapsto(\vol^g)^{-2p}\in\Ga_{H^\al}(|\La^m|^{-2p}(T^*M))
\end{equation*}
is real analytic because its power series converges on open sets with respect to the supremum norm, which are $H^\al$-open thanks to the Sobolev embedding theorem. 

\item 
The fiber metric on $T^0_0M=M\times\mathbb R$ does not depend on $g$ and is of class $C^\infty$, and any trivial bundle is a direct sum of such bundles.
\qedhere
\end{enumerate}
\end{proof}

By abuse of notation we will sometimes write $g$ for the metric as well as the induced fiber metric of \cref{lem:fiber}. 

\begin{lemma}[Covariant derivative]
\label{lem:covariant}
Let $\al\in(m/2,\infty)$ and $s\in[1-\al,\al]$.
\begin{enumerate}
\item 
\label{lem:covariant:a}
For each $g\in\Met_{H^\al}(M)$ and natural first order vector bundle $E$ over $M$, 
there is a unique bounded linear mapping
\begin{equation*}
\Ga_{H^s}(E) \ni h \mapsto \nabla^g h \in \Ga_{H^{s-1}}(T^*M\otimes E)
\end{equation*}
which acts as a derivation with respect to tensor products, commutes with each symmetrization operator, and coincides with the Levi-Civita covariant derivative in the cases $E=TM$ and $E=T^*M$. 
\item
\label{lem:covariant:b}
The covariant derivative is real analytic as a mapping 
\begin{align*}
\Met_{H^\al}(M) \ni g \mapsto \nabla^g \in  L(\Ga_{H^s}(E),\Ga_{H^{s-1}}(T^*M\otimes E)).
\end{align*}

\item 
\label{lem:covariant:c}
If $E$ is trivial, then this holds for all $s \in \mathbb R$.
\end{enumerate}
\end{lemma}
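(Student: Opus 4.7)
The plan is to reduce everything to a local coordinate expression $\nabla^g h=\p h+\Ga(g)\,h$ in which the Christoffel symbols $\Ga(g)$ have Sobolev regularity $\al-1$, then invoke the module property \autoref{thm:module} together with \autoref{lem:inverse} and \autoref{lem:volume}, and finally pass from pointwise to operator-valued real analyticity through the convenient-calculus uniform boundedness principle of \autoref{sec:convenient}\ref{sec:convenient:c}.

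For uniqueness in (a), the derivation requirement plus vanishing on constants forces $\nabla^g$ to equal $d$ on functions, and the Levi-Civita requirement on $TM$ and $T^*M$ together with the Leibniz rule determines $\nabla^g$ on every tensor bundle $T^r_sM$. Commuting with symmetrizations passes this to the Young-symmetric subbundles described in \autoref{sec:natural}. On the density line bundles $|\La^m|^p(T^*M)$ the operator is fixed by $\nabla^g\vol^g=0$, which is in turn forced because $\vol^g$ is a polynomial expression in $g$ whose covariant derivative vanishes by the Leibniz rule and $\nabla^g g=0$. Since a general first order natural bundle is a direct sum of such pieces, this handles every $E$.

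For existence and the mapping property I work chart by chart as in \autoref{sec:sobolev}. For $E=TM$, the Christoffel symbols
\begin{equation*}
\Ga^k_{ij}(g)=\tfrac12(g\i)^{kl}\bigl(\p_ig_{jl}+\p_jg_{il}-\p_lg_{ij}\bigr)
\end{equation*}
lie in $H^{\al-1}$: we have $g\i\in H^\al$ by \autoref{lem:inverse}, $\p g\in H^{\al-1}$, and the product is absorbed in $H^{\al-1}$ by \autoref{thm:module}\ref{thm:module:a} applied to the triple $(\al,\al-1,\al-1)$, whose hypotheses reduce to $\al>m/2$. For $h\in\Ga_{H^s}(TM)$ the leading term $\p h$ lies in $H^{s-1}$ for free, and \autoref{thm:module}\ref{thm:module:a} applied to $(\al-1,s,s-1)$ puts $\Ga(g)\,h$ into $H^{s-1}$; its three conditions reduce precisely to $1-\al\le s\le\al$ and $\al>m/2$, which explains the range in the statement. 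The case $E=T^*M$ is identical, and tensor bundles, symmetric subbundles, and density bundles follow by combining the derivation rule with the uniqueness discussion above, always producing coefficients in $H^{\al-1}$.

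For (b), with $h$ fixed, $g\mapsto\nabla^g h$ is a finite sum of contractions of real analytic maps ($g\mapsto g$, $g\mapsto g\i$ via \autoref{lem:inverse}, $g\mapsto\vol^g$ via \autoref{lem:volume}, and the bounded linear $g\mapsto\p g$) with $h$, and is therefore real analytic into $\Ga_{H^{s-1}}(T^*M\otimes E)$. Since source and target are Banach spaces, the uniform boundedness principle of \autoref{sec:convenient}\ref{sec:convenient:c} upgrades this pointwise real analyticity to real analyticity as a map into $L(\Ga_{H^s}(E),\Ga_{H^{s-1}}(T^*M\otimes E))$. Part~(c) is immediate: on a trivial bundle $\nabla^g$ reduces to $d$ applied componentwise, independent of $g$, which maps $H^s\to H^{s-1}$ for every $s\in\R$. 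The step I expect to require the most care is the endpoint behaviour: at $s=1-\al$ and $s=\al$ the triple $(\al-1,s,s-1)$ sits exactly on the boundary of condition~\ref{thm:module:a}, and one has to check that the strict inequality $s_1+s_2-s>m/2$ still holds (it becomes $\al>m/2$) so that no appeal to \ref{thm:module:b} or \ref{thm:module:c} is required.
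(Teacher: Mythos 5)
Your proof follows the paper's second proof closely: you pass to local charts, exhibit the Christoffel symbols in $H^{\alpha-1}$ using \autoref{lem:inverse} and the module property of \autoref{thm:module}, verify the product $\Gamma(g)\,h \in H^{s-1}$ for exactly the range $s\in[1-\alpha,\alpha]$, and upgrade pointwise real analyticity in $g$ to operator-valued real analyticity via the uniform boundedness theorem \cite[Theorem~11.14]{KM97} — so the approach is essentially the same. One small inaccuracy in your uniqueness discussion: $\vol^g = \sqrt{\det g}\,dx^1\cdots dx^m$ is \emph{not} a polynomial expression in $g$, so the claim that $\nabla^g\vol^g=0$ is forced "because $\vol^g$ is a polynomial in $g$" does not work as written; the Leibniz argument should instead be applied to $\vol^g\otimes\vol^g$ (which is a polynomial contraction of $g$ with a fixed coordinate density) to deduce $\nabla^g\vol^g=0$, or — as the paper does — one can simply take $\nabla^g_X(\vol^g)^p=0$ as the definition of the connection on $|\Lambda^m|^p(T^*M)$.
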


We will show this \lcnamecref{lem:covariant} in two ways.

\begin{proof}
\begin{enumerate}[wide]
\item 
Assume temporarily that $E=TM$. 
Let $X$, $Y$, and $Z$ be arbitrary smooth vector fields. 
Let $\widehat\nabla$ be a smooth covariant derivative on $M$, either induced via charts or by a fixed smooth background Riemannian metric. 
We express the Levi-Civita connection of $g\in \Met_{H^{\al}}(M)$ as 
\begin{equation*}
\nabla^g_XY = \widehat\nabla_XY + A^g(X,Y)
\end{equation*}
for a suitable section $A^g$ of the bundle $T^1_2M = T^*M\otimes L(TM,TM)$. 
As $\nabla^g$ has to be $g$-compatible and torsion-free, the tensor field $A^g$ has to satisfy the following conditions:
\begin{align*}
(\widehat\nabla_X g)(Y,Z) &= g(A^g(X,Y),Z) + g(Y,A^g(X,Z)),
\\
A^g(X,Y) &= A^g(Y,X). 
\end{align*}
We take the cyclic permutations of the first equation, sum them with signs $+,+,-$, and use symmetry of $A^g$ to obtain
\begin{equation*}
2g(A^g(X,Y),Z) = (\widehat\nabla_X g)(Y,Z) + (\widehat\nabla_Y g)(Z,X) - (\widehat\nabla_Z g)(X,Y);
\end{equation*}
this equation determines $A^g$ uniquely as a $H^{\al-1}$-tensor field. 
It is easily checked that it satisfies the two requirements above (i.e., compatibility with the metric and freedom of torsion). 
Together with the module properties of \cref{thm:module} this implies \ref{lem:covariant:a} for $E=TM$.

The extension from $E=TM$ to first order natural bundles $E$ is via multilinear algebra: the case $E=T^*M$ follows by testing with smooth vector fields, and the general case by choosing local frames for $E$, writing any $H^s$ section of $E$ as a sum of scalar $H^s$ coefficients times $C^\infty$ sections of $E$, and using the module property of \cref{thm:module}.
For the line bundle of $p$-densities the covariant derivative is simply determined by $\nabla^g_X(\vol^g)^p=0$ for all $p\in \mathbb R$, i.e., 
\begin{equation*}
\nabla^g_X\nu = d\Big(\frac{\nu}{(\vol^g)^p}\Big)(X)\cdot(\vol^g)^p\,.
\end{equation*}

\item 
As before it is sufficient to treat the case $E=TM$. 
The tensor field $A^g$ is given in abstract index notation by
\begin{align*}
(A^g)^i_{jk}=\frac12g^{il}\big((\widehat\nabla g)_{ljk}+(\widehat\nabla g)_{jkl}-(\widehat\nabla g)_{ljk}\big),
\end{align*}
where $g^{il}$ denotes the inverse of the metric. 
Both $g^{-1}\in \Ga_{H^\al}(S^2TM)$ and $\widehat\nabla g\in\Ga_{H^{s-1}}(T^0_3M)$ are real analytic in $g \in \Met_{H^\al}(M)$, and the contractions between $g^{-1}$ and $\widehat\nabla g$ are continuous by the module properties of \cref{thm:module}. 
Therefore, $g\mapsto A^g$ is real analytic $\Met_{H^\al}(M)\to \Ga_{H^{\al-1}}(T^1_2M)$. 
Together with the module properties of \cref{thm:module} this shows that $\nabla^gY=\widehat\nabla Y+A^g(\cdot,Y) \in \Ga_{H^{\al-1}}(T^1_1M)$ is real analytic in $g$ for each $Y \in \Ga_{H^s}(TM)$.
By the real analytic uniform boundedness theorem \cite[Theorem~11.14]{KM97} this implies that $\nabla^g \in L(\Ga_{H^s}(TM),\Ga_{H^{s-1}}(T^1_1M))$ is real analytic in $g$. 

\item The fiber metric on trivial bundles is smooth and does not depend on $g$ by \cref{lem:fiber}.\ref{lem:fiber:c}.
\qedhere
\end{enumerate}
\end{proof}

Note that \cref{lem:covariant} and the module property of \cref{thm:module} imply that the covariant derivative is a bounded bilinear mapping
\begin{equation*}
\nabla^g:\Ga_{H^{\al}}(TM)\x \Ga_{H^s}(TM)\ni (X,Y) \mapsto \nabla^g_XY \in \Ga_{H^{s-1}}(TM).
\end{equation*}
For $E=TM$, this can also be seen directly from the defining properties of the Levi-Civita covariant derivative. 

\begin{remark}[Geodesics]
\label{rem:geodesics}
The proof of \cref{lem:covariant} shows that the Christoffel symbols are of  class $H^{\al-1}$. 
They transform as the last part in the second tangent bundle, and the associated spray $S^g$ is an $H^{\al-1}$-section of both $\pi_{TM}:T^2M\to TM$ and $T(\pi_M):T^2M\to TM$; see \cite[Section~22.6]{Michor08}.        
If $\al>\frac{m}2+1$, then the spray $S^g$ is continuous and we have local existence (but not uniqueness) of geodesics in each chart separately, by Peano's theorem. 
If $\al>\frac{m}2+2$, then $S^g$ is $C^1$ and there is existence and uniqueness of geodesics by Picard-Lindel\"of. 
\end{remark}

\begin{theorem}[Bochner Laplacian]
\label{thm:bochner}
Let $\al\in(m/2,\infty)$,
let $s \in [2-\al,\al]$,
and let $E$ be a natural first order vector bundle over $M$.
\begin{enumerate}
\item 
\label{thm:bochner:a}
For each $g \in \Met_{H^\al}(M)$, the Bochner Laplacian is a bounded Fredholm operator of index zero
\begin{equation*}
\De^g: \Ga_{H^s}(E) \ni h \mapsto 
- \on{Tr}^{g\i}(\na^{g}\na^{g} h) \in \Ga_{H^{s-2}}(E). 
\end{equation*}

\item 
\label{thm:bochner:b}
The Laplacian depends real analytically on the metric, i.e., the following mapping is real analytic:
\begin{equation*}
\Met_{H^{\al}}(M) \ni g \mapsto \De^g \in  L(\Ga_{H^{s}}(E),\Ga_{H^{s-2}}(E)).
\end{equation*}

\item 
\label{thm:bochner:c}
If $E$ is trivial then these statements hold for all $s \in [2-\al,\al+1]$.
This also applies to Laplace operators induced by connections which do not depend on the metric $g$. 
\end{enumerate}
\end{theorem}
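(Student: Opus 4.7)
For part~\ref{thm:bochner:a} I would write $\De^g = -\on{Tr}^{g^{-1}}\!\circ\nabla^g\!\circ\nabla^g$ and chain the maps. By \autoref{lem:covariant}.\ref{lem:covariant:a} the first covariant derivative is bounded $\Ga_{H^s}(E)\to\Ga_{H^{s-1}}(T^*M\otimes E)$ for $s\in[1-\al,\al]$, and the second is bounded $\Ga_{H^{s-1}}(T^*M\otimes E)\to\Ga_{H^{s-2}}(T^*M\otimes T^*M\otimes E)$ for $s-1\in[1-\al,\al]$; jointly this gives precisely the range $s\in[2-\al,\al]$ claimed in part~\ref{thm:bochner:a}. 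Contracting the two covariant slots against $g^{-1}\in\Ga_{H^\al}(S^2TM)$ is bounded on $\Ga_{H^{s-2}}$ by the module property \autoref{thm:module}.\ref{thm:module:a} applied with $(s_1,s_2,s)=(\al,s-2,s-2)$, since $\al>m/2$ and $s-2\in[-\al,\al]$. Part~\ref{thm:bochner:b} follows at once from the same chain: \autoref{lem:inverse} and \autoref{lem:covariant}.\ref{lem:covariant:b} give that $g\mapsto g^{-1}$ and $g\mapsto\nabla^g$ are real analytic into the corresponding Sobolev and bounded-operator spaces, and composing them with the bounded bilinear contraction is real analytic in the convenient sense of \autoref{sec:convenient}, yielding a real analytic map $\Met_{H^\al}(M)\to L(\Ga_{H^s}(E),\Ga_{H^{s-2}}(E))$.

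For the Fredholm claim in part~\ref{thm:bochner:a} I would exploit ellipticity: the principal symbol of $\De^g$ is $\xi\mapsto g^{-1}(\xi,\xi)\Id_E$, which is positive definite, and with top-order coefficients in $H^\al\hookrightarrow C^0$ and lower-order coefficients in $H^{\al-1}$ a local pseudodifferential parametrix — or equivalently a G{\aa}rding coercivity estimate combined with the Rellich–Kondrachov compactness $\Ga_{H^s}\hookrightarrow\Ga_{H^{s-2}}$ on the closed manifold $M$ — yields a two-sided inverse of $\De^g$ modulo compact operators, hence Fredholmness. Index zero then follows from homotopy invariance along the segment $t\mapsto(1-t)\hat g+tg$ to a smooth reference metric $\hat g$: this path lies in $\Met_{H^\al}(M)$ because positive definiteness is pointwise convex, the induced path of Laplacians is continuous in $L(\Ga_{H^s}(E),\Ga_{H^{s-2}}(E))$ by part~\ref{thm:bochner:b}, and $\De^{\hat g}$ has index zero by classical elliptic theory (being self-adjoint on $L^2(\hat g)$ with elliptic regularity of the kernel).

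For part~\ref{thm:bochner:c}, if $E$ is trivial then $\nabla^g$ on sections of $E$ reduces to the metric-independent componentwise exterior derivative, which by \autoref{lem:covariant}.\ref{lem:covariant:c} is bounded $\Ga_{H^s}(E)\to\Ga_{H^{s-1}}(T^*M\otimes E)$ for every $s\in\mathbb R$. Only the outer covariant derivative then constrains the range, imposing $s-1\in[1-\al,\al]$ and enlarging $s$ to $[2-\al,\al+1]$; the same argument handles Laplacians built from any $g$-independent connection. The main obstacle is ultimately the Fredholm claim at finite Sobolev regularity: the classical pseudodifferential calculus is tailored to smooth symbols, so one must either invoke a variant compatible with coefficients in $H^\al$ respectively $H^{\al-1}$ and check that the parametrix construction survives the loss of smoothness permitted by $\al>m/2$, or push the coercivity/G{\aa}rding route through with careful bookkeeping of the Sobolev indices; everything else in the statement reduces to routine combination of the module properties with the preceding lemmas.
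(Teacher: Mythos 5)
The boundedness chain, the real-analyticity argument in part~(b), and the identification of the enlarged range $[2-\al,\al+1]$ in part~(c) all match the paper's argument. The genuine gap is in the Fredholm claim, which you yourself flag as ``the main obstacle'': you sketch a pseudodifferential parametrix or a G{\aa}rding coercivity estimate but do not make either route work with coefficients of finite Sobolev regularity, and the classical smooth-symbol calculus indeed does not apply here. The paper resolves this by citing the elliptic estimates of Holst, Nagy, and Tsogtgerel \cite[Lemmas~32--34]{holst2009rough}, which are designed precisely for operators whose coefficients $a^{i_1,\dots,i_m}$ lie in $H^{\al-2+i_1+\dots+i_m}$, and which yield finite-dimensional kernel and closed range --- i.e.\ only semi-Fredholmness --- for $s\in(2-\al,\al]$. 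From semi-Fredholmness the index is well-defined in $[-\infty,\infty)$, and the paper then uses openness of the set of semi-Fredholm operators with a fixed index (rather than homotopy invariance along the whole segment $(1-t)\hat g+tg$) together with a smooth $\hat g$ sufficiently close to $g$ to conclude index zero. Your homotopy argument is not wrong once Fredholmness is known along the path, but it presupposes what needs to be established, whereas the paper only needs semi-Fredholmness for the openness argument.

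Two further omissions. First, the elliptic-estimate range $(2-\al,\al]$ excludes the endpoint $s=2-\al$, and the paper recovers it by $H^0(g)$-duality, exploiting that $\De^g$ is formally self-adjoint so that the adjoint $(\De^g)^{*,H^0(g)}=\De^g\colon\Ga_{H^{2-s}}(E)\to\Ga_{H^{-s}}(E)$ is bounded; your sketch does not address this endpoint at all. Second, in part~(c) enlarging the range to $s\in(\al,\al+1]$ requires an extra argument in the paper: namely that the scalar Laplacian has no zero-order term, and that the remaining first-order part maps $H^{s-\delta}\to H^{s-2}$ continuously for some $\delta>0$ by the module property, so that the elliptic estimate of \cite[Lemma~32]{holst2009rough} still applies in this extended range. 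You correctly identify where the factor of $1$ is gained in the boundedness chain, but the Fredholm conclusion for $s>\al$ does not come for free.
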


Similar statements for more general differential operators with Sobolev coefficients can be found in \cite[Theorem~2]{Mueller2017} and \cite[Lemma~34]{holst2009rough}; see also \cite{bandara2016kato} and references therein for operators in divergence form. 

\begin{proof}
\begin{enumerate}[wide]
\item 	
By \cref{thm:module} and \cref{lem:covariant} the Laplacian is a bounded linear mapping
\begin{equation*}
\Delta^g\colon \Ga_{H^s}(E) \stackrel{\nabla^g}{\longrightarrow} \Ga_{H^{s-1}}(T^*M\otimes E) \stackrel{\nabla^{g}}{\longrightarrow} \Ga_{H^{s-2}}(T^0_2M\otimes E) \stackrel{\Tr^{g^{-1}}}{\longrightarrow} \Ga_{H^{s-2}}(E),
\end{equation*}
where the first arrow requires $s \in [1-\al,\al]$, the second arrow requires $s-1 \in [1-\al,\al]$, and the third arrow requires $s-2\in[-\al,\al]$. 
Integration by part shows that $\Delta^g$ is formally self-adjoint with respect to the $H^0(g)$ inner product, and a similar reasoning as above shows that the $H^0(g)$-adjoint operator (see \cref{lem:volume})
\begin{equation*}
(\Delta^g)^{*,H^0(g)}=\Delta^g\colon \Ga_{H^{2-s}}(E) \to \Ga_{H^{-s}}(E)
\end{equation*}
is also a bounded linear mapping. 

The Sobolev regularity of the coefficients of $\Delta^g$ is linked to the number of derivatives as follows: in each vector bundle chart of $E$, the operator $\Delta^g$ takes the form
\begin{equation*}
\Delta^g = \sum_{i=1}^m a^i \partial_{x^i} + \sum_{i,j=1}^m a^{i,j}\partial_{x^i}\partial_{x^j}
\end{equation*}
for some coefficients $a^i \in H^{\al-1}(\mathbb R^m,\mathbb R^{n\times n})$ and $a^{i,j}\in H^{\al}(\mathbb R^m,\mathbb R^{n\times n})$, as can be seen from the proof of \cref{lem:covariant}.
Moreover, the leading-order coefficients $(a^{i,j})$ are coordinate expressions of the cometric and therefore invertible.
Therefore, the elliptic estimates of \cite[Lemmas~32--34]{holst2009rough} may be applied, and it follows for each $s \in (2-\al,\al]$ that the operator $\Delta^g\colon \Ga_{H^s}(E) \to \Ga_{H^{s-2}}(E)$ has finite-dimensional kernel and closed range.
In particular, it is semi-Fredholm, and its index $\operatorname{ind}(\Delta^g) \in [-\infty, \infty)$ is well-defined. 

The set of all semi-Fredholm operators with the same index as $\Delta^g$ is open in $L(\Ga_{H^s}(E),\Ga_{H^{s-2}}(E))$ (see e.g. \cite[Theorem~III.18.4]{muller2007spectral}). By  continuity of the mapping
\begin{equation*}
\Met_{H^\al}(M) \to L(\Ga_{H^s}(E),\Ga_{H^{s-2}}(E)), \qquad g \mapsto \Delta^g,
\end{equation*}
there is a smooth metric $\hat g$ which is sufficiently close to $g$ such that $\Delta^{\hat g}$ is semi-Fredholm and has the same index as $\Delta^g$. 
But standard elliptic theory implies that $\Delta^{\hat g}$ is Fredholm of index zero. 
Thus, we have shown that $\Delta^g\colon \Ga_{H^s}(E) \to \Ga_{H^{s-2}}(E)$ is Fredholm of index zero for each $s \in (2-\al,\al]$. This extends to all $s \in [2-\al,\al]$ by $H^0(g)$-duality. 

\item 
This follows from the real analyticity of the covariant derivative (\cref{lem:covariant}) and the module properties of Sobolev spaces (\cref{thm:module}).

\item Writing the covariant derivative of functions as a differential, one obtains from \cref{lem:covariant} and \cref{thm:module} that the Laplacian is a continuous linear operator
\begin{equation*}
\Delta^g\colon {H^s}(M,\mathbb R) \stackrel{d}{\longrightarrow} H^{s-1}(M,T^*M\otimes \mathbb R) \stackrel{\nabla^{g}}{\longrightarrow} H^{s-2}(M,T^0_2M\otimes \mathbb R) \stackrel{\Tr^{g^{-1}}}{\longrightarrow} H^{s-2}(M,\mathbb R),
\end{equation*}
where the first mapping is continuous for all $s\in \mathbb R$, the second one for  $s-1 \in [1-\al,\al]$, and the third one for  $s-2\in[-\al,\al]$. 
A similar statement holds when the first map $d$ is replaced by a connection which does not depend on $g$, and the second map $\nabla^g$ is the induced connection on $T^*M\otimes E$.

In local coordinates $(x^1,\dots,x^m)$, the Laplacian can be expressed as
\begin{align*}
\De^g f
&=
\sum_{i,j=1}^m \frac{1}{\sqrt  {|\operatorname{det}(g)|}}\partial_{x^i}\left({\sqrt  {|\operatorname{det}(g)|}}g^{{ij}}\partial_{x^j}f\right)
\\&=
\sum_{\substack{i_1,\dots,i_m \in \mathbb N_{\geq 0}\\1\leq i_1+\dots+i_m\leq 2}}  a^{i_1,\dots,i_m} \partial_{x_1}^{i_1}\dots\partial_{x_m}^{i_m},
\end{align*}
where $a^{i_1,\dots,i_m} \in H^{\al-2+i_1+\dots+i_m}(\mathbb R^m,\mathbb R)$. 
Note that there is no zero-order term. 

Assume temporarily that $s \in (2-\al,\al+1]$. 
By \cref{thm:module}, there is $\delta>0$ such that the first-order part 
\begin{equation*}
\sum_{\substack{i_1,\dots,i_m \in \mathbb N_{\geq 0}\\i_1+\dots+i_m=1}}  a^{i_1,\dots,i_m} \partial_{x_1}^{i_1}\dots\partial_{x_m}^{i_m} \colon H^{s-\delta}(M,\mathbb R)\to H^{s-2}(M,\mathbb R)
\end{equation*}
is continuous.  
Therefore, the elliptic estimate of \cite[Lemma~32]{holst2009rough} still holds in the present setting, and it follows from \cite[Lemmas~33--34]{holst2009rough} that $\Delta^g\colon H^s(M,\mathbb R)\to H^{s-2}(M,\mathbb R)$ has finite-dimensional kernel and closed range for each $s \in (2-\al,\al+1]$. The rest of the proof is as before.
\qedhere
\end{enumerate}
\end{proof}

The following \lcnamecref{lem:derivative} shows that the directional derivative of the Laplacian extends to spaces of lower regularity than predicted by \cref{thm:bochner}.
This is used in \cref{lem:adjoint,thm:satisfies} below.

\begin{lemma}[Derivative of the Laplacian with respect to the metric]
\label{lem:derivative}
Let $\al\in (m/2,\infty)$ with $\al\geq 1$,
let $E$ be a natural first order vector bundle over $M$, 
let $r\in[2-\al,\al]$, 
and let $s \in [2-r,\al]$. 
Then the directional derivative of the Laplace operator with respect to the metric extends to a real analytic mapping
\begin{equation*} 
\Met_{H^{\al}}(M) \times \Ga_{H^r}(S^2T^*M) \ni (g,q) \mapsto D_{g,q}\De^g \in L(\Ga_{H^s}(E), \Ga_{H^{r+s-2-\al}}(E)).
\end{equation*}
If the connection on $E$ does not depend on $g$, then this extends to all $r \in [1-\alpha,\alpha]$ and $s \in [2-r,\alpha+1]$.
\end{lemma}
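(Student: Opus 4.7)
My plan is to compute $D_{g,q}\De^g$ in local coordinates and verify boundedness of each resulting term via the module property \autoref{thm:module}.

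The first step is to use the local-coordinate expression
\begin{equation*}
\De^g h = \sum_{|\be|\leq 2} a^\be_g\,\p^\be h
\end{equation*}
from the proofs of \autoref{lem:covariant} and \autoref{thm:bochner}, in which $a^\be_g$ is a polynomial expression in $g$, $g\i$, and partials of $g$ up to order $2-|\be|$. By \autoref{lem:inverse} and \autoref{thm:module}, $a^\be_g$ depends real analytically on $g\in\Met_{H^\al}(M)$ and locally lies in $H^{\al-2+|\be|}$. Differentiating in the direction $q\in\Ga_{H^r}(S^2T^*M)$ preserves this polynomial structure but contributes one factor of $q$, $\p q$, or $\p^2 q$; hence $D_{g,q}a^\be_g$ is linear in $q$, still real analytic in $g$, and of class $H^{r-2+|\be|}$.

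The second step is to verify, via \autoref{thm:module}\ref{thm:module:a} applied with input regularities $r-2+|\be|$ and $s-|\be|$, that each term $(D_{g,q}a^\be_g)\,\p^\be h$ is bounded from $\Ga_{H^s}(E)$ into $\Ga_{H^{r+s-2-\al}}(E)$. The three module conditions reduce respectively to $s\geq 2-r$, $\max(r,s)\leq\al$, and $\al>m/2$, all of which are in force. Gluing the chart-wise estimates via the partition of unity of \autoref{sec:sobolev} produces the asserted bounded operator $D_{g,q}\De^g \in L(\Ga_{H^s}(E),\Ga_{H^{r+s-2-\al}}(E))$. Real analyticity in $(g,q)$ then follows from the real analyticity of $g\mapsto g\i$ (\autoref{lem:inverse}), the bilinear continuity of pointwise multiplication (\autoref{thm:module}), the linearity of the construction in $q$, and the real analytic uniform boundedness theorem \cite[Theorem~11.14]{KM97}.

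For the extended range $r\in[1-\al,\al]$, $s\in[2-r,\al+1]$ under the assumption that the connection on $E$ does not depend on $g$, I would observe that $\p^2 g$ enters the coefficients of $\De^g$ only through differentiating a $g$-dependent connection $1$-form on $E$. Once that connection is fixed, $a^0_g$ involves at most first partials of $g$ (arising solely from the Christoffel symbols of Levi-Civita on $T^*M$ contracted with the fixed connection $1$-form), so $a^0_g\in H^{\al-1}$ and $D_{g,q}a^0_g\in H^{r-1}$, one order better than in the general case. Redoing the module check with the improved input regularities $(r-1,s)$ for $|\be|=0$ and with the earlier pairs for $|\be|=1,2$ now succeeds throughout the extended range, the critical new inequalities being $\al+1\geq s$ and $\al+1\geq r$, both of which hold. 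The main obstacle I anticipate is the bookkeeping: verifying that each coefficient $D_{g,q}a^\be_g$ really does lie in the claimed Sobolev space requires several intermediate applications of \autoref{thm:module}, and the module inequalities for the final product must be checked case by case in both ranges. The conceptually nontrivial step is identifying the precise regularity gain in the $g$-independent case, which is what opens up the extended range.
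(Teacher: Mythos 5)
Your proposal is correct, and it arrives at the same module-property constraints as the paper, but by a genuinely different decomposition. The paper works covariantly: it writes $\nabla^g=\nabla^{\hat g}+A^g$, shows $D_{g,q}A^g\in\Ga_{H^{r-1}}(T^*M\otimes L(E,E))$, and then differentiates $\De^g=-\Tr^{g\i}\nabla^g\nabla^g$ to obtain three terms indexed by which occurrence of $g$ is being differentiated (the trace, the inner connection on $E$, the outer connection on $T^*M\otimes E$). You instead expand $\De^g=\sum_{|\be|\le 2}a^\be_g\p^\be$ in charts and index by differential order $|\be|$, tracking the coefficient regularity $D_{g,q}a^\be_g\in H^{r-2+|\be|}$. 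The two decompositions do not correspond termwise (the paper's third summand $\Tr^{g\i}\nabla^g(D_{g,q}A)h$, for instance, contributes to both your $|\be|=0$ and $|\be|=1$ pieces once expanded), yet both reduce to the same three module checks $H^{r-2+|\be|}\times H^{s-|\be|}\to H^{r+s-2-\al}$ and hence to the same binding constraints $s\ge 2-r$, $s\le\al$, $r\le\al$. Your coordinate approach makes the coefficient regularity bookkeeping more transparent and mirrors the local-coordinate computations already present in the proofs of \autoref{lem:covariant} and \autoref{thm:bochner}; the paper's approach is frame-free and avoids having to argue that the $GL(m)$-representation makes the connection coefficients on $E$ polynomial in the Christoffel symbols, which you are implicitly using. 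Your explanation of the extended range is also correct and, if anything, more conceptual than the paper's (which simply observes the third summand vanishes): you identify that when the $E$-connection is $g$-independent the only source of $\p^2 g$ in the coefficients disappears, so $a^0_g$ improves from $H^{\al-2}$ to $H^{\al-1}$, and you correctly re-run the module checks. Two small cautions, neither a gap: first, the assertion $D_{g,q}a^\be_g\in H^{r-2+|\be|}$ itself already consumes the lower bound $r\ge 2-\al$ (respectively $r\ge1-\al$) through products such as $g\i\cdot\p^2 q$, so it is worth flagging that this is where that hypothesis enters rather than folding it silently into ``all of which are in force''; second, your phrase ``the critical new inequalities being $\al+1\ge s$ and $\al+1\ge r$'' slightly understates the binding constraint on $r$, which remains $r\le\al$ coming from the unchanged $|\be|=2$ term, but since you also invoke ``the earlier pairs for $|\be|=1,2$'' this is a matter of emphasis, not substance.
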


\begin{proof}
Recall from the proof of \cref{lem:covariant} that the covariant derivative can be expressed as $\nabla^g = \widehat\nabla + A^g$.
In the case $E=TM$ the tensor field $A^g$ is a section of $T^*M\otimes L(TM,TM)$, which is given in abstract index notation by
\begin{align*}
(A^g)^i_{jk}=\frac12g^{il}\big((\widehat\nabla g)_{ljk}+(\widehat\nabla g)_{jkl}-(\widehat\nabla g)_{ljk}\big).
\end{align*}
Thus, the directional derivative $D_{g,q}\nabla^g=D_{g,q}A^g$ satisfies
\begin{align*}
D_{g,q}(A^g)^i_{jk}
=&\tfrac12(D_{g,q}g^{il})\big(({\widehat\nabla}g)_{ljk}+({\widehat\nabla}g)_{jkl}-({\widehat\nabla}g)_{ljk}\big) 
\\&
+\tfrac12g^{il}\big(D_{g,q}({\widehat\nabla}g)_{ljk}+({\widehat\nabla}g)_{jkl}-({\widehat\nabla}g)_{ljk}\big)
\\&
+\tfrac12g^{il}\big(({\widehat\nabla}g)_{ljk}+D_{g,q}({\widehat\nabla}g)_{jkl}-({\widehat\nabla}g)_{ljk}\big)
\\&
+\tfrac12g^{il}\big(({\widehat\nabla}g)_{ljk}+({\widehat\nabla}g)_{jkl}-D_{g,q}({\widehat\nabla}g)_{ljk}\big).
\end{align*}
It follows from the module properties of \cref{thm:module} together with the formulas 
\begin{equation*}
D_{g,q}g\i=-g\i q g\i, 
\qquad
D_{g,q}{\widehat\nabla}g={\widehat\nabla}q,
\end{equation*}
that $D_{g,q}A^g$ extends to a real analytic map
\begin{align*}
\Met_{H^\al}(M)\times \Ga_{H^r}(S^2T^*M) \ni (g,q) \mapsto D_{g,q}A^g \in \Ga_{H^{r-1}}(T^*M\otimes L(TM,TM)).
\end{align*}
Taking $r\leq \alpha$ and $\alpha>d/2$ for granted, this requires continuity of the multiplication $H^r\times H^{\alpha-1}\to H^{r-1}$.
By the multilinear algebra described in the proof of \cref{lem:covariant}, this generalizes from $E=TM$ to all first order natural bundles $E$, yielding a real analytic map
\begin{align*}
\Met_{H^\al}(M)\times \Ga_{H^r}(S^2T^*M) \ni (g,q) \mapsto D_{g,q}A^g \in \Ga_{H^{r-1}}(T^*M\otimes L(E,E)).
\end{align*}
In particular, we will use that this holds for the given bundle $E$ and for its tensor product with $T^*M$.
For any $h \in \Ga_{H^s}(E)$, differentiating the formula $\De^g h=\Tr^{g\i}(\nabla^g\nabla^gh)$ of \cref{thm:bochner} at $g\in\Met_{H^\al}(M)$ in a smooth direction $q\in\Ga(S^2T^*M)$ yields 
\begin{align*}
D_{g,q}\De^gh 
&= -
D_{g,q}(\Tr^{g^{-1}}\nabla^g\nabla^gh) 
\\&= 
-(D_{g,q}\Tr^{g^{-1}})\nabla^g\nabla^gh 
-\Tr^{g^{-1}}(D_{g,q}A)\nabla^gh
-\Tr^{g^{-1}}\nabla^g(D_{g,q}A)h.
\end{align*}
In the special case where the connection on $E$ does not depend on $g$, the last summand above vanishes.
By the $H^{r-1}$ regularity of $D_{g,q}A$ and the module properties of \cref{thm:module}, this formula extends real analytically to all $q \in \Ga_{H^r}(S^2T^*M)$. 
Taking $r\leq \alpha$ and $\alpha>d/2$ for granted, the first summand requires continuity of the multiplication $H^r\times H^{s-2}\to H^{r+s-2-\alpha}$, the second summand requires continuity of the multiplication $H^{r-1}\times H^{s-1}\to H^{r+s-2-\alpha}$, and the third summand requires continuity of the multiplication $H^{r-1}\times H^s\to H^{r+s-1-\alpha}$ and $\nabla\colon H^{r+s-1-\alpha}\to H^{r+s-2-\alpha}$. The third summand requires the additional conditions $s\leq \alpha$ and $r\geq 2-\alpha$, which are not needed anywhere else.
As $h \in \Ga_{H^s}(E)$ was arbitrary, the \lcnamecref{lem:derivative} follows from the real analytic uniform boundedness theorem~\cite[Theorem~11.14]{KM97} . 
\end{proof}

\section{Perturbative spectral theory of sectorial operators}
\label{sec:sectorial}

The main result of this section (\cref{thm:smooth}) is that the functional calculus $A\mapsto f(A)$ is holomorphic for certain classes of operators $A$ and holomorphic functions $f$.
The result hinges on the theory of operators with bounded $\mathcal H^\infty$ calculus and their perturbations \cite{kalton2006perturbation}. 

\subsection{Sectorial operators}
\label{sec:sectorial2}
For each $\om\in[0,\pi]$, the sector $S_\om$ of angle $\pm\om$ about the positive real axis in the complex plane is defined as
\begin{equation*}
S_\om :=\begin{cases} \{ z\in \mathbb C: z\ne 0 \text{ and  }|\on{arg}(z)| <\om\} &\quad \text{ if }\om\in(0,\pi]
\\
(0,\infty) &\quad \text{ if }\om=0.
\end{cases}
\end{equation*}
For any $\om\in(0,\pi]$, $\mathcal H^\infty(S_\om)$ denotes the Banach algebra of bounded holomorphic functions on $S_\om$ with the supremum norm.

Let $A$ be a (possibly unbounded) closed linear operator on a Banach space $X$. 
We denote its resolvent set by $\rho(A)$ and its resolvent by $R_\la(A)=(A-\la)\i$ for $\la\in\rho(A)$.
Then $A$ is called \emph{sectorial} of angle $\om\in [0,\pi)$ if the spectrum of $A$ is contained in $\overline{S_\om}$ and for all $\om'\in(\om,\pi)$, the 
resolvent operators $\la R_\la(A)$ for $\la \in \mathbb C\setminus \overline{S_{\om'}}$ are uniformly bounded in $L(X)$.

Sectorial operators admit a holomorphic functional calculus  as described below, following  \cite[Section~2.5.1]{Haase2006}. 
For simplicity we restrict ourselves to invertible operators, as this eliminates the need for regularization at the origin.
Generalizations to injective operators with dense range have been considered in~\cite{kalton2006perturbation}, and we believe that the results of this section generalize to this setting. In our applications, Riemannian metrics on spaces of mappings, invertibility is a natural assumption and is needed to  formulate  the geodesic equation, see~\cref{sec:conditions} and thus we consider only this simpler situation in this article.

Let $0<\om<\ph<\pi$,
let $r>0$,
let $A$ be an invertible sectorial operator of angle strictly less than $\om$, 
let $\bigcirc$ be a closed ball centered zero contained in $\rho(A)$, 
and let $f$ be a holomorphic function on $S_\ph$ satisfying
\begin{equation*}
\sup_{\la \in S_\ph\setminus\bigcirc}|\la^r f(\la)|<\infty.
\end{equation*}
Then the following Bochner integral is well-defined by the sectoriality of $A$:
\begin{equation*}
f(A) := \frac{-1}{2\pi i}\int_{\partial(S_{\om}\setminus\bigcirc)} f(\la) R_\la(A) d\la \in L(X).
\end{equation*}
This primary functional calculus can be extended to larger classes of functions as described in \cite[Chapter~1]{Haase2006}.
In particular, for any $z \in \mathbb C$, the fractional power $A^z$ is well-defined as an invertible sectorial operator \cite[Chapter~3]{Haase2006}. 
The (homogenous) fractional domain space $\dot X_r$ of $A$ is defined for any $r \in \mathbb R$ as the completion of the domain of $A^r$ with respect to the norm $\|x\|_{\dot X_r} := \|A^r x\|_X$. Note that $\dot X_0=X$. 
Let $\dot X_{<r}=\bigcap_{s<r}\dot X_s$ and $\dot X_{>r}=\bigcup_{s>r}\dot X_s$.
Note, that these spaces coincide with the inhomogenous fractional domain space $X_r$, defined via the graph norm, thanks to the assumed invertibility of the operator $A$.

The following \lcnamecref{lem:holomorphic} shows that the resolvent map is holomorphic in the operator. 
This is the basis for all subsequent considerations. 

\begin{lemma}[Perturbations of sectorial operators]
\label{lem:holomorphic}
Let $A$ be an invertible sectorial operator of angle strictly less than $\om\in(0,\pi)$ on a complex Banach space $X$, 
let $(\dot X_r)_{r\in\mathbb R}$ be the fractional domain spaces associated to $A$, 
and let $\bigcirc$ be a closed centered ball contained in the resolvent set of $A$.
Then there exists an open neighborhood $U$ of $A$ for the $L(\dot X_1,\dot X_0)$-topology such that the following statements hold for all $r \in (-\infty,1]$, $\ph \in (\om,\pi)$, and holomorphic functions $f\colon S_\ph\to\mathbb C$ with $\sup_{\la \in S_\ph\setminus\bigcirc}|\la^r f(\la)|<\infty$.
\begin{enumerate}
\item 
\label{lem:holomorphic:a}
All operators in $U$ are sectorial of angle strictly less than $\omega$, and their resolvent sets contain the ball $\bigcirc$.

\item 
\label{lem:holomorphic:b}
The following map is well-defined and holomorphic:
\begin{align*}
U \ni B &\mapsto (\la\mapsto\la^{1-r} R_\la(B)) \in  C_b(\partial(S_{\om}\setminus\bigcirc),L(\dot X_0,\dot X_r)),
\end{align*}
where $C_b$ denotes the space of globally bounded continuous functions.

\item 
\label{lem:holomorphic:c}
The following map is well-defined and holomorphic:
\begin{align*}
U \ni B &\mapsto (\la\mapsto\la^{1-r} R_\la(B)) \in  C_b(\partial(S_{\om}\setminus\bigcirc),L(\dot X_{1-r},\dot X_1)).
\end{align*}

\item 
\label{lem:holomorphic:d}
Assume that $A$ is densely defined, let $\mathbb D$ be the open unit ball in $\mathbb C$,
and let $B\colon\mathbb D\to U$ be a holomorphic map such that $\sup_{z \in \mathbb D} \|f(B(z))\|_{L(\dot X_0,\dot X_r)}<\infty$. 
Then the following map is holomorphic:
\begin{equation*}
\mathbb D \ni z \mapsto f(B(z)) = \frac{-1}{2\pi i}\int_{\partial(S_{\om}\setminus\bigcirc)} f(\la) R_\la(B(z)) d\la \in L(\dot X_0,\dot X_r),
\end{equation*} 
where the integral converges in $L(\dot X_0,\dot X_{<r})$ and $L(\dot X_{>1-r},\dot X_1)$.
\end{enumerate}
\end{lemma}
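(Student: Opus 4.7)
The plan is to establish (a) by Neumann-series perturbation, promote the resulting resolvent expansion to the target topologies of (b) and (c) via a standard sectorial-calculus bound, and assemble the functional-calculus integral in (d), closing a final half-step of regularity by the Cauchy integral formula.

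For (a), sectoriality of $A$ provides uniform bounds on $\|\la R_\la(A)\|_{L(X)}$ for $\la$ outside any slightly enlarged sector $S_{\om'}$ with $\om'<\om$, and a compactness argument extends the bound to the resolvent ball $\bigcirc$. Combined with $\|AR_\la(A)\|_{L(X)}=\|\Id+\la R_\la(A)\|_{L(X)}\leq C$, this gives a uniform bound on $\|R_\la(A)\|_{L(\dot X_0,\dot X_1)}$. For $B$ close to $A$ in $L(\dot X_1,\dot X_0)$, I factor $B-\la=(A-\la)(\Id_{\dot X_1}+R_\la(A)(B-A))$ and invert the right factor by Neumann series:
\begin{equation*}
R_\la(B)=\sum_{n\geq 0}\bigl(-R_\la(A)(B-A)\bigr)^n R_\la(A),
\end{equation*}
uniformly in the specified $\la$-set. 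This yields sectoriality of $B$ of angle strictly less than $\om$ and the inclusion $\bigcirc\subset\rho(B)$.

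For (b) and (c), the Neumann formula above is a convergent power series in the perturbation $B-A\in L(\dot X_1,\dot X_0)$, hence holomorphic in $B$. The key $\la$-uniform estimate is the standard sectorial-calculus bound $\|A^rR_\la(A)\|_{L(X)}\leq C|\la|^{r-1}$, valid for all real $r$ on the contour for invertible sectorial $A$ (derivable by iterating $A^{-1}R_\la(A)=\la^{-1}(R_\la(A)-A^{-1})$ for $r<0$ and by the classical moment estimate on $[0,1]$). At the given $r$,
\begin{equation*}
\|\la^{1-r} R_\la(A)\|_{L(\dot X_0,\dot X_r)}=|\la|^{1-r}\|A^r R_\la(A)\|_{L(X)}\leq C,
\end{equation*}
which is the required bound for (b); composing with the isomorphism $A^{r-1}\colon X\to\dot X_{1-r}$ transfers the same estimate to the target topology of (c). Substituting these bounds into the Neumann series gives holomorphy into $C_b(\partial(S_\om\setminus\bigcirc),L(\dot X_0,\dot X_r))$ and $C_b(\partial(S_\om\setminus\bigcirc),L(\dot X_{1-r},\dot X_1))$ respectively.

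For (d), combining the growth hypothesis $|f(\la)|\leq C|\la|^{-r}$ with the estimate from (b) at any $r'<r$ yields
\begin{equation*}
\|f(\la) R_\la(B(z))\|_{L(\dot X_0,\dot X_{r'})}=O(|\la|^{r'-r-1}),
\end{equation*}
which is integrable on $\partial(S_\om\setminus\bigcirc)$. Hence the Bochner integral converges into $L(\dot X_0,\dot X_{r'})$ for every $r'<r$, and as the integrand is holomorphic in $z$ by (b) composed with $B(\cdot)$, dominated convergence transfers this to the integral, giving $z\mapsto f(B(z))$ holomorphic into $L(\dot X_0,\dot X_{<r})$; the symmetric statement into $L(\dot X_{>1-r},\dot X_1)$ follows from (c). To upgrade holomorphy to the sharp space $L(\dot X_0,\dot X_r)$, I apply the Cauchy integral formula $f(B(z))=\frac{1}{2\pi i}\oint\frac{f(B(\ze))}{\ze-z}\,d\ze$ on a small disk in $\mathbb D$. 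This integral converges in $L(\dot X_0,\dot X_{<r})$ by the weaker holomorphy just established, while the a priori bound $\sup_\ze\|f(B(\ze))\|_{L(\dot X_0,\dot X_r)}<\infty$ makes the integrand Bochner-integrable also in the stronger topology. The two integrals agree as elements of $L(\dot X_0,\dot X_{<r})$ and hence, by injectivity of the continuous inclusion $L(\dot X_0,\dot X_r)\hookrightarrow L(\dot X_0,\dot X_{<r})$, as elements of $L(\dot X_0,\dot X_r)$, producing $f(B(z))$ as a holomorphic $L(\dot X_0,\dot X_r)$-valued function.

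The main obstacle I anticipate is precisely this strict inequality $r'<r$: the resolvent estimate produces only a borderline nonintegrable factor $|\la|^{-1}$ once the full power $\la^{1-r}$ has been absorbed, so Bochner convergence in the sharp target $L(\dot X_0,\dot X_r)$ cannot be obtained by direct integration and must be recovered from the a priori bound on $f(B(z))$. This is the point at which the theory of bounded $\mathcal H^\infty$ calculus and its perturbations \cite{kalton2006perturbation} enters in the companion \autoref{thm:smooth}, supplying precisely that bound.
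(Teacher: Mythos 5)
Your proof of \ref{lem:holomorphic:a}--\ref{lem:holomorphic:c} follows essentially the same plan as the paper: a Neumann--series perturbation of the resolvent, made $\la$-uniform by the sectorial estimate $\sup_\la\|\la^{1-r}A^rR_\la(A)\|_{L(\dot X_0)}<\infty$ for $r\le 1$. You derive that bound from the moment inequality on $[0,1]$ and the resolvent identity for $r<0$, while the paper gets it from a contour-integral representation of $\psi_\nu(tA)$ with $\psi_\nu(z)=\nu^{1-r}z^r(z-\nu)^{-1}$; these are equivalent. Your phrase ``valid for all real $r$'' overreaches --- for $r>1$, $A^rR_\la(A)$ is unbounded when $A$ is --- but you only invoke $r\le 1$ so this costs nothing. (Also, the paper abbreviates the signs in its Neumann expansion; your sign $(-1)^n$ is the correct one.)

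In \ref{lem:holomorphic:d} your strategy and the paper's coincide at the level of the key idea (upgrade holomorphy from the lossy target $L(\dot X_0,\dot X_{<r})$ to the sharp $L(\dot X_0,\dot X_r)$ by Cauchy's formula together with the a priori operator bound), but you implement it at the operator level, whereas the paper implements it at the vector level. That difference is not cosmetic: your write-up asserts that $\sup_\ze\|f(B(\ze))\|_{L(\dot X_0,\dot X_r)}<\infty$ ``makes the integrand Bochner-integrable also in the stronger topology,'' but in a general Banach space boundedness together with continuity in a \emph{weaker} operator norm does not yield strong measurability in the \emph{stronger} one (operator spaces are not separable, so Pettis' criterion is not automatic). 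The integral $\oint\frac{f(B(\ze))}{\ze-z}\,d\ze$ therefore need not exist as an $L(\dot X_0,\dot X_r)$-valued Bochner integral. The paper sidesteps this by first fixing a vector: one picks $x_n\in\dot X_{1-s}$ ($s<r$) dense in $\dot X_0$, so that by part \ref{lem:holomorphic:c} the map $z\mapsto f(B(z))x_n$ is genuinely holomorphic (hence continuous, hence Bochner-measurable) with values in $\dot X_1\subseteq\dot X_r$; the Cauchy integral is then a $\dot X_r$-valued integral of a continuous curve, the limit $n\to\infty$ is taken inside the integral using the a priori bound, and finally the convenient holomorphic uniform boundedness theorem recovers operator-level holomorphy from the vector-level statements. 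Your argument becomes correct with exactly this modification: apply the Cauchy-formula limiting device to $f(B(\cdot))x$ for $x$ in a dense subspace, then invoke uniform boundedness.
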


Note that \cref{lem:holomorphic}.\ref{lem:holomorphic:b} implies that $B\mapsto f(B)$ is holomorphic with values in $L(\dot X_0,\dot X_{<r})$. 
Similarly, \cref{lem:holomorphic}.\ref{lem:holomorphic:c} implies that $B\mapsto f(B)$ is holomorphic with values in $L(\dot X_{>1-r},\dot X_1)$. 
In either case there is a loss of regularity. 
Point \ref{lem:holomorphic:d} shows that this loss of regularity can be overcome using bounds on the functional calculus.
Indeed, it implies that $f(B)$ is holomorphic in $B$ for any convenient operator topology such that $B\mapsto\|f(B)\|_{L(\dot X_0,\dot X_r)}$ is locally bounded; see \cref{sec:convenient}.
This will be exploited in \cref{lem:smooth} below.

\begin{proof}
\begin{enumerate}[wide]
\item 
Choose $\om'$ strictly greater than the angle of sectoriality of $A$ and strictly smaller than $\pi$, 
fix a centered closed ball $\bigcirc$ in the resolvent set of $A$,  
let $\La=\mathbb C \setminus S_{\om'} \cup \bigcirc$, 
and define constants $a,b \in (0,\infty)$ by
\begin{align*}
a\i &= 3\sup_{\la\in \La}\|R_\la(A)\|_{L(\dot X_0)}<\infty,
\\
b\i &= 3\sup_{\la\in \La}\|A R_\la(A)\|_{L(\dot X_0)} 
= 3\sup_{\la\in \La} \|\on{Id}_X+\la R_\la(A)\|_{L(\dot X_0)}
\\&
\le 3(1+\sup_{\la\in \La} \|\la R_\la(A)\|_{L(\dot X_0)})<\infty.
\end{align*}
Here the bounds for small $\la$ follow from the invertibility and for large $\la$ from the sectoriality of $A$.
Let $U$ be the set of all $B \in L(\dot X_1,\dot X_0)$ with $\|B-A\|_{L(\dot X_1,\dot X_0)}< b$. 
Then the definitions of $a$ and $b$ imply for all $\la\in\La$ that
\begin{equation*}
a\|R_\la(A)\|_{L(\dot X_0)} + b \|A R_\la(A)\|_{L(\dot X_0)}\le \frac23 <1,
\end{equation*}
and the definition of $U$ implies for all $B \in U$ and $x\in \dot X_1$ that
\begin{equation*}
\| (B-A)x\|_{\dot X_0} \le \| B-A\|_{L(\dot X_1,\dot X_0)} \|x\|_{\dot X_1} \le b \| Ax\|_{\dot X_0}.
\end{equation*}
By \cite[Theorem~IV.3.17]{Kato76} these estimates show that $\La$ is contained in the resolvent set of $B$, and the resolvent of $B$ satisfies for all $\la\in\La$ that
\begin{equation*}
\| R_\la(B) \|_{L(\dot X_0)} \le \frac{\|R_\la (A)\|_{L(\dot X_0)}}{1-a\| R_\la(A)\|_{L(\dot X_0)} - b \| AR_\la (A)\|_{L(\dot X_0)} } 
\le 3 \|R_\la (A)\|_{L(\dot X_0)}.
\end{equation*}
Hence, $B$ is sectorial of angle $\omega'$ on $\dot X_0$, and the resolvent set of $B$ contains $\bigcirc$.

\item 
For each $\la \in \La$, the resolvent $(U\ni B \mapsto R_\la(B)\in L(\dot X_0,\dot X_{1}))$ is holomorphic. 
As $U$ is a metric ball, the following series converges in $L(\dot X_0,\dot X_{1})$ for all $B$ in this ball: 
\begin{align*}
R_\lambda(B)
&=
\sum_{n\in\mathbb N} \frac{R^{(n)}_\la(A)(B-A)^n}{n!}
=
\sum_{n\in\mathbb N} R_\la(A) \big( (B-A)R_\la(A)\big)^n,
\end{align*}
where the second equality can be verified easily by induction on $n$.
We need some resolvent estimates to show that this series converges uniformly in $\la$ in appropriate topologies. For all $r \in (-\infty,1)$, one has
\begin{align*}
\sup_{\la\in\partial S_{\om'}} \|\la^{1-r}A^rR_\la(A)\|_{L(\dot X_0)} 
=
\sup_{\nu=e^{\pm i\om'}}\sup_{t\in\mathbb R_{>0}} \|\psi_\nu(tA)\|_{L(\dot X_0)} < \infty,
\end{align*}
where $\psi_\nu(z)=\nu^{1-r}z^r(z-\nu)\i$ and the bound follows from 
\begin{align*}
\psi_\nu(tA) 
&=
\int_{\partial(S_\om\setminus\bigcirc)}\psi_\nu(\la) R_\la(tA)d\la
=
\int_{\partial(S_\om\setminus\bigcirc)}\la\i\psi_\nu(\la)\cdot
\tfrac{\la}{t} R_{\frac{\la}{t}}(A)d\la,
\end{align*}
where under the integral on the right-hand side the first factor is integrable and the second factor is bounded.
Together with the bounds in \ref{lem:holomorphic:a} this shows for all $r \in (-\infty,1]$ that
\begin{equation*}
\sup_{\la\in\La} \|\la^{1-r}A^rR_\la(A)\|_{L(\dot X_0)} < \infty.
\end{equation*}
Therefore, one has for all $r \in (-\infty,1]$ that
\begin{align*}
\hspace{2em}&\hspace{-2em}
\sum_{n\in\mathbb N}\sup_{\la\in\La} \Big\lVert \la^{1-r}A^r R_\la(A) \big( (B-A)R_\la(A) \big)^n\Big\rVert_{L(\dot X_0)}
\\&\leq
\sum_{n\in\mathbb N} \sup_{\la,\mu\in\La} \lVert \la^{1-r}A^r R_\la(A) \rVert_{L(\dot X_0)} \lVert B-A\rVert_{L(\dot X_{1},\dot X_0)}^n \lVert R_\mu(A)\rVert_{L(\dot X_0,\dot X_{1})}^n.
\end{align*}
By the definition of $b$, the right-hand side is finite if
\begin{equation*}
\lVert B-A\rVert_{L(\dot X_{1},\dot X_0)} < 3b,
\end{equation*}
which holds true for all $B \in U$.
This proves \ref{lem:holomorphic:b}.

\item can be shown as in \ref{lem:holomorphic:b} using the estimate
\begin{align*}
\hspace{2em}&\hspace{-2em}
\sum_{n\in\mathbb N}\sup_{\la\in\La} \Big\lVert  R_\la(A) \big( (B-A)R_\la(A) \big)^n \la^{1-r}A^r\Big\rVert_{L(\dot X_{1})}
\\&\leq
\sum_{n\in\mathbb N} \sup_{\la,\mu\in\La} \lVert R_\mu(A)\rVert_{L(\dot X_0,\dot X_{1})}^n \lVert B-A\rVert_{L(\dot X_{1},\dot X_0)}^n \lVert R_\la(A) \la^{1-r}A^r  \rVert_{L(\dot X_{1})}.
\end{align*}

\item 
Let $s < r$,
and let $x \in \dot X_0$.
As $A$ is densely defined, there is a sequence $(x_n)_{n\in\mathbb N}$ in $\dot X_{1-s}$ which converges to $x$ in the $\dot X_0$ topology.  
By \ref{lem:holomorphic:c} and the continuous inclusion of $\dot X_{1}$ in $\dot X_{r}$, the following map is holomorphic for any $n\in\mathbb N$:
\begin{equation*}
U \ni B \mapsto (\la\mapsto\la^{1-s} R_\la(B)x_n) \in C_b(\partial(S_{\om}\setminus\bigcirc),\dot X_{r}).
\end{equation*}
This implies that the map
\begin{equation*}
U \ni B \mapsto f(B)x_n = \frac{-1}{2\pi i}\int_{\partial(S_{\om}\setminus\bigcirc)} f(\la)\la^{s-1} \cdot \la^{1-s}R_\la(B)x_n d\la \in \dot X_{r}
\end{equation*}
is holomorphic, where under the integral the first factor is integrable and the second one bounded. 
By Cauchy's integral theorem, one obtains for any closed ball $D \subset \mathbb D$ and any $z$ in the interior of $D$ that
\begin{equation*}
f(B(z))x_n = \frac{-1}{2\pi i} \int_{\partial D} \frac{f(B(w))x_n}{z-w} dw \in \dot X_{r}.
\end{equation*}
The assumption $\sup_{w \in \mathbb D} \|f(B(w))\|_{L(\dot X_0,\dot X_{r})}<\infty$ allows one to take the limit $n\to\infty$, which shows that
\begin{equation*}
f(B(z))x = \frac{-1}{2\pi i} \int_{\partial D} \frac{f(B(w))x}{z-w} dw \in \dot X_{r}.
\end{equation*}
This shows that $z\mapsto f(B(z))x$ is holomorphic. 
As this holds for all $x \in \dot X_0$, one obtains from the holomorphic uniform boundedness theorem that $z\mapsto f(B(z))$ is holomorphic, as claimed. 
The resolvent integrals converge in $L(\dot X_0,\dot X_{<r})$ and $L(\dot X_{>1-r},\dot X_1)$ thanks to \ref{lem:holomorphic:b}--\ref{lem:holomorphic:c}. 
This concludes the proof of \ref{lem:holomorphic:d}.
\qedhere
\end{enumerate}
\end{proof}

\subsection{Bounded \texorpdfstring{$\mathcal H^\infty$}{H infinity} calculus and R-sectoriality}

Let $A$ be an invertible sectorial operator of positive angle strictly less than $\om\in(0,\pi)$ on a complex Banach space $X$.
Then each bounded holomorphic function $f$ on $S_\om$ defines a possibly unbounded closed linear operator $f(A)$ \cite[Section~2.5.1]{Haase2006}.
The operator $A$ is said to admit a \emph{bounded $\mathcal H^\infty(S_\om)$ calculus} if \cite[Section~5.3]{Haase2006}
\begin{equation*}
\sup_{f\in\mathcal H^\infty(S_\om)\setminus\{0\}} \frac{\|f(A)\|_{L(X)}}{\|f\|_{\mathcal H^\infty(S_\om)}} < \infty,
\end{equation*}
where $\|\cdot\|_{\mathcal H^\infty(S_\om)}$ is the supremum norm of bounded holomorphic functions on $S_\om$.

We will use in \cref{lem:smooth} below that boundedness of the $\mathcal H^\infty$ calculus is stable under perturbations which are relatively bounded in two distinct fractional domain scales \cite[Theorem~6.1]{kalton2006perturbation}. 
This has been proven first by J.~Pr\"uss (1994) in an unpublished article called ``Perturbation theory for the class $\mathcal H^\infty(X)$'' and published first in \cite{denk2004new}.
Moreover, we will use repeatedly that the fractional domain spaces $(\dot X_r)_{r\in\mathbb R}$ associated to operators with bounded $\mathcal H^\infty$ calculus are complex interpolation spaces; see \cite[Proposition~2.2]{kalton2006perturbation} or \cite[Lemma~4.13]{lunardi2018interpolation}.
This characterization is available also for the larger class of operators with bounded imaginary powers, but there are no corresponding perturbative results for this class \cite[Section~4.6]{Arendt2004}.

Boundedness of the $\mathcal H^\infty$ calculus implies a high degree of unconditionality, i.e., norm boundedness can be replaced by R-boundedness in several regards (cf.\@ \cite[Section~5.6]{Haase2006} and \cite[Section~4]{kalton2006perturbation}). This follows from quadratic estimates first developed by~\cite{mcintosh1986operators,cowling1996banach}.

A set $F\subseteq L(X,Y)$ of linear operators between Banach spaces $X$ and $Y$ is called R-bounded \cite[Section~3]{kalton2006perturbation} if there exists a constant $C>0$ such that the following inequality holds for all $n\in\mathbb N$, $x_1,\dots,x_n\in X$, $B_1,\dots,B_n \in F$, and independent Rademacher random variables $\ep_1,\dots,\ep_n$: 
\begin{equation*}
\mathbb E\Big\|\sum_k \ep_k B_k x_k\Big\|_Y^2 \leq C^2 \mathbb E\Big\|\sum_k\ep_k x_k\Big\|_X^2.	
\end{equation*}
A closed linear operator $A$ on a Banach space $X$ is called R-sectorial of angle $\om\in[0,\pi)$ 
if the spectrum of $A$ is contained in $\overline{S_\om}$ and for all $\om'\in(\om,\pi)$, 
the set $\{\la R_\la(A): \la \in \mathbb C\setminus \overline{S_{\om'}}\}\subseteq L(X)$ is R-bounded \cite[Section~3]{kalton2006perturbation}.
On Hilbert spaces the notions of boundedness and R-boundedness coincide \cite[Section~1]{kalton2014hinfinity}.

The following \lcnamecref{lem:smooth} carries out the program hinted at in \cref{lem:holomorphic}.\ref{lem:holomorphic:d}: 
it identifies an operator topology such that boundedness of the $\mathcal H^\infty$ calculus is an open condition and uses the bounds on the $\mathcal H^\infty$ calculus to deduce that the functional calculus is holomorphic without any loss of regularity.

\begin{lemma}[Perturbations of operators with bounded \texorpdfstring{$\mathcal H^\infty$}{H infinity} calculus]
\label{lem:smooth}
Let $A$ be an invertible densely defined R-sectorial operator of positive angle strictly less than $\om\in(0,\pi)$ with bounded $\mathcal H^\infty(S_\om)$ calculus on a complex Banach space $X$, 
let $(\dot X_r)_{r\in\mathbb R}$ be the fractional domain spaces associated to $A$, 
let $\bigcirc$ be a closed centered ball contained in the resolvent set of $A$, 
let $\delta \in \mathbb R\setminus\{0\}$,
and let $V=L(\dot X_1,\dot X_0)\cap L(\dot X_{\de+1},\dot X_\de)$.
Then there exists an open neighborhood $U$ of $A\in V$ such that following statements hold for all $r \in [0,1]$ and $\ph\in(\om,\pi)$.
\begin{enumerate}
\item 
\label{lem:smooth:a}
All operators  $B \in U$ are R-sectorial of positive angle strictly less than $\om$, have resolvent sets which contain the ball $\bigcirc$, and admit a bounded $\mathcal H^\infty(S_\ph)$ calculus with uniform bounds
\begin{equation*}
\sup_{B \in U}\ \sup_{g \in \mathcal H^\infty(S_\ph)\setminus\{0\}} \frac{\|g(B)\|_{L(X)}}{\|g\|_{H^\infty(S_\ph)}}
+ \|B^{-r}\|_{L(\dot X_0,\dot X_r)}
<\infty.
\end{equation*}

\item 
\label{lem:smooth:c}
For any holomorphic function $f\colon S_\ph\to\mathbb C$ with $\sup_{\la \in S_\ph} |\la^r f(\la)|<\infty$,
the following map is well-defined and holomorphic, 
\begin{equation*}
U \ni B \mapsto f(B) = \int_{\partial(S_{\om}\setminus\bigcirc)} f(\la) R_\la(B) d\la \in L(X, \dot X_r),
\end{equation*}
where the integral converges in $L(\dot X_0,\dot X_{<r})$ and $L(\dot X_{>1-r},\dot X_1)$
\end{enumerate}
\end{lemma}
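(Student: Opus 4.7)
The plan is to use \autoref{lem:holomorphic} as the backbone for the resolvent estimates and for holomorphy along holomorphic disks, and to upgrade these to the uniform bounds and convenient holomorphy in \ref{lem:smooth:a}--\ref{lem:smooth:c} by invoking the Kalton--Weis perturbation theorem for the $\mathcal H^\infty$ calculus. As a first step, I apply \autoref{lem:holomorphic}.\ref{lem:holomorphic:a} on each of the two scales $(\dot X_1,\dot X_0)$ and $(\dot X_{\de+1},\dot X_\de)$ to find a $V$-neighborhood $U_0$ of $A$ on which every $B$ is sectorial of angle strictly less than $\om$ on both scales and has resolvent set containing $\bigcirc$. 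The Kalton--Weis perturbation theorem \cite[Theorem~6.1]{kalton2006perturbation} then supplies a (possibly smaller) $V$-neighborhood $U\subseteq U_0$ of $A$ such that every $B\in U$ is R-sectorial of angle strictly less than $\om$ and admits a bounded $\mathcal H^\infty(S_\ph)$ calculus with constants uniform in $B$. The assumption $\de\neq 0$ enters at exactly this point: it guarantees that $V$ controls relatively bounded perturbations on two distinct fractional scales, which is precisely the hypothesis of \cite[Theorem~6.1]{kalton2006perturbation}.

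Next, I identify the $A$- and $B$-fractional domain spaces, uniformly over $U$. By \cite[Proposition~2.2]{kalton2006perturbation}, the fractional domain spaces of any operator with bounded $\mathcal H^\infty$ calculus are complex interpolation spaces between its base space and its domain. The estimates in \autoref{lem:holomorphic}.\ref{lem:holomorphic:a} show that $B\colon\dot X_1\to\dot X_0$ is an isomorphism for $B\in U$ with upper and lower norm bounds uniform in $B$, so that the $B$- and $A$-interpolation scales agree with uniformly equivalent norms. The extended holomorphic calculus \cite[Chapter~3]{Haase2006} then gives $B^{-r}\in L(X,\dot X_r)$ for every $r\in[0,1]$ with $\sup_{B\in U}\|B^{-r}\|_{L(X,\dot X_r)}<\infty$, completing \ref{lem:smooth:a}.

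For \ref{lem:smooth:c} I decompose $f(\la)=\la^{-r}h(\la)$ with $h(\la):=\la^r f(\la)\in \mathcal H^\infty(S_\ph)$. Multiplicativity of the functional calculus yields $f(B)=B^{-r}h(B)$, so
\begin{equation*}
\sup_{B\in U}\|f(B)\|_{L(X,\dot X_r)}\le \Big(\sup_{B\in U}\|B^{-r}\|_{L(X,\dot X_r)}\Big)\Big(\sup_{B\in U}\|h(B)\|_{L(X)}\Big)<\infty
\end{equation*}
by \ref{lem:smooth:a}. This verifies the hypothesis of \autoref{lem:holomorphic}.\ref{lem:holomorphic:d}, which therefore yields holomorphy of $z\mapsto f(B(z))\in L(X,\dot X_r)$ along every holomorphic curve $B\colon\mathbb D\to U$. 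Since $V$ is a Banach space, convenient holomorphy coincides with classical holomorphy by \autoref{sec:convenient}.\ref{sec:convenient:b}, and the holomorphic uniform boundedness theorem (\autoref{sec:convenient}.\ref{sec:convenient:c}) upgrades this to holomorphy of $U\ni B\mapsto f(B)\in L(X,\dot X_r)$. The stated convergence of the resolvent integral in $L(\dot X_0,\dot X_{<r})$ and $L(\dot X_{>1-r},\dot X_1)$ is read off directly from \autoref{lem:holomorphic}.\ref{lem:holomorphic:b}--\ref{lem:holomorphic:c}. The main obstacle---and the reason \autoref{lem:holomorphic} alone is insufficient---is the uniform $\mathcal H^\infty$ bound: boundedness of the $\mathcal H^\infty$ calculus is notoriously unstable under relatively bounded perturbations, and recovering it here genuinely requires the two-scale hypothesis built into $V$ together with the assumption $\de\neq 0$.
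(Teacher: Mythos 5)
Your proposal is correct and follows essentially the same route as the paper: Lemma~\ref{lem:holomorphic}.\ref{lem:holomorphic:a} for sectoriality and resolvent bounds, the Kalton--Weis theorem \cite[Theorem~6.1]{kalton2006perturbation} for the uniform $\mathcal H^\infty$ bound, complex interpolation via \cite[Proposition~2.2]{kalton2006perturbation} for the uniform $B^{-r}$ bound, and Lemma~\ref{lem:holomorphic}.\ref{lem:holomorphic:d} plus the holomorphic uniform boundedness theorem for part~\ref{lem:smooth:c}. The one place you are a bit terse is the application of \cite[Theorem~6.1]{kalton2006perturbation}: the paper has to split into the cases $\de<0$ and $\de>0$ (invoking Corollary~6.5 for the latter), reduce to $|\de|<1$ by interpolation, and check that the range condition $\on{ran}B\subseteq\on{ran}A^{-\de}$ can be replaced by continuity of $A^{\de}BA^{-\de-1}$; these are genuine hypotheses to verify, not mere bookkeeping, though your overall strategy is exactly right.
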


\begin{proof}
\begin{enumerate}[wide]
\item is proven in the three subsequent steps \ref{lem:smooth:a1}--\ref{lem:smooth:a3}. 
\begin{enumerate}[wide, label={\textnormal{\bfseries (a\arabic*)}}]
\item 
\label{lem:smooth:a1}
By \cref{lem:holomorphic} there is a neighborhood $U$ of $A \in V$ such that all operators in $U$ are sectorial of angle strictly less than $\om$, and their resolvent sets contain the ball $\bigcirc$. 

\item 
\label{lem:smooth:a2}
We claim that $U$ may be replaced by a smaller neighborhood of $A$ such that all operators in $U$ are $R$-sectorial and have uniformly bounded $\mathcal H^\infty(S_\ph)$ calculus:
\begin{equation*}
\sup_{B \in U}\ \sup_{g \in \mathcal H^\infty(S_\ph)\setminus\{0\}} \frac{\|g(B)\|_{L(X)}}{\|g\|_{H^\infty(S_\ph)}}
<\infty.
\end{equation*}
As the fractional domain spaces of $A$ are complex interpolation spaces \cite[Proposition~2.2]{kalton2006perturbation}, we may assume $|\de|<1$ for the sake of the subsequent arguments.
In the case $\de<0$ the claim follows from \cite[Theorem~6.1]{kalton2006perturbation} (with the sign of $\de$ reversed) by noting that in this theorem the condition $\on{ran} B \subseteq \on{ran} A^{-\de}$ is not needed as long as $A^{\de}BA^{-\de-1}$ extends to a continuous operator on $X$, which is the case here. 
The $\mathcal H^\infty(S_\ph)$ calculus is bounded uniformly on $U$, as can be seen by tracking the constants in \cite[Theorem~6.1]{kalton2006perturbation}. 
Indeed, in \cite{kalton2006perturbation}, Theorem~6.1 is based on Lemma~6.2, and the constant $C$ in the proof of this lemma is uniform in $B \in U$ because it depends only on $\|A^{\de}BA^{-\de-1}\|_{L(X)}$ and the R-bound of $\{M(\la):|\arg \la|\geq\om\}$, which is again uniform in $B$. 
The constant $C$ of \cite[Lemma~6.2]{kalton2006perturbation} is passed on to Theorem~4.1.(iii), which is proven in Proposition~4.6. 
This proposition uses only the R-sectoriality of $B$ and therefore furnishes uniform bounds. 
This proves the claim in the case $\de<0$. 
In the case $\de>0$ the claim follows from \cite[Corollary~6.5]{kalton2006perturbation} (again with the sign of $\de$ reversed).
This corollary is based on \cite[Theorem~6.1]{kalton2006perturbation} and also furnishes uniform bounds for the $\mathcal H^\infty(S_\ph)$ calculus.
This proves \ref{lem:smooth:a2}. 

\item
\label{lem:smooth:a3}
For $r \in\{0,1\}$ the condition $\sup_{B\in U} \|B^{-r}\|_{L(\dot X_0,\dot X_r)}<\infty$ is trivially satisfied. 
Thus, we restrict to the case $r \in (0,1)$. 
As the operators $B\in U$ have bounded $\mathcal H^\infty(S_\ph)$ calculus, their associated fractional domains $(\dot X_{r,B})_{r\in\mathbb R}$ are complex interpolation spaces \cite[Proposition~2.2]{kalton2006perturbation}.
Thus, there is a constant $C>0$ such that the following estimate holds for all $B \in U$:
\begin{align*}
\|B^{-r}\|_{L(\dot X_0,\dot X_r)}
&=
\|\Id\|_{L(\dot X_{r,B},\dot X_r)}
\leq
C \|\Id\|_{L(\dot X_{0,B},\dot X_0)}^{1-r}\|\Id\|_{L(\dot X_{1,B},\dot X_1)}^r
\\&=
C \|B^{-1}\|_{L(\dot X_0,\dot X_1)}^r.
\end{align*}
The right-hand side is bounded uniformly on $U$. 
This proves \ref{lem:smooth:a3} and concludes the proof of \ref{lem:smooth:a}.
\end{enumerate}

\item  
Let $\mathbb D$ denote the open unit ball in $\mathbb C$, 
and let $B\colon\mathbb D \to U$ be a holomorphic map. 
Then \ref{lem:smooth:a} implies that
\begin{equation*}
\sup_{z \in \mathbb D} \|f(B(z))\|_{L(\dot X_0,\dot X_r)}
\leq 
\sup_{B \in U} \|B^{-r}\|_{L(\dot X_0,\dot X_r)} \|B^rf(B)\|_{L(\dot X_0,\dot X_0)} < \infty.
\end{equation*}
Thus, \cref{lem:holomorphic}.\ref{lem:holomorphic:d} shows that the curve $f(B)\colon\mathbb D \to L(X,\dot X_r)$ is holomorphic.
By convenient calculus, this implies \ref{lem:smooth:c}.
\qedhere
\end{enumerate} 
\end{proof}

The following theorem, which is the main result of this section, sums up some implications of \cref{lem:smooth} in the common situation where the perturbations can be controlled in the $L(\dot X_{\al+1},\dot X_\al)$ topology for all $\al$ in an interval $[\beta,\gamma]$ or equivalently for all $\al\in\{\be,\ga\}$ by complex interpolation.

\begin{theorem}[Perturbations of operators with bounded \texorpdfstring{$\mathcal H^\infty$}{H infinity} calculus]
\label{thm:smooth}
Let $A$ be an invertible densely defined R-sectorial operator of positive angle strictly less than $\om\in(0,\pi)$ with bounded $\mathcal H^\infty(S_\om)$ calculus on a complex Banach space $X$, 
let $(\dot X_r)_{r\in\mathbb R}$ be the fractional domain spaces associated to $A$,
let $\be,\ga \in\mathbb R$ with $\be<\ga$, 
and let $V=L(\dot X_{\be+1},\dot X_\be) \cap L(\dot X_{\ga+1},\dot X_\ga)$.
Then there exists an open neighborhood $U$ of $A\in V$ such that 
for all $r,s\in\mathbb R$ with $s,s+r\in[\be,\ga+1]$,
$\ph\in(\om,\pi)$, 
and holomorphic functions $f\colon S_\ph\to\mathbb C$ with $\sup_{\la\in S_\ph}|\la^r f(\la)|<\infty$, 
the following map is well-defined and holomorphic:
\begin{equation*}
U \ni B \mapsto f(B) \in L(\dot X_s,\dot X_{s+r}).
\end{equation*}
\end{theorem}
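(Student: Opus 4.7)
The strategy is to reduce the theorem to \autoref{lem:smooth} by a shift of base space and iterated composition, exploiting that $(\dot X_r)_{r\in\mathbb R}$ is a complex interpolation scale.

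First, for each $\alpha \in \mathbb R$ the operator $A$, viewed on the Banach space $\dot X_\alpha$, remains a densely defined invertible R-sectorial operator of angle strictly less than $\om$ with bounded $\mathcal H^\infty(S_\om)$ calculus, whose fractional domain spaces are $(\dot X_{\alpha+r})_{r\in\mathbb R}$; this is immediate from the functional calculus, since $A$ commutes with its own fractional powers, and transfers via the isometric isomorphism $A^\alpha\colon\dot X_\alpha\to\dot X_0$. Complex interpolation of operator norms yields the continuous inclusion $V \hookrightarrow L(\dot X_{\alpha+1},\dot X_\alpha)$ for all $\alpha \in [\be,\ga]$. I then apply \autoref{lem:smooth} twice: once to $A$ on $\dot X_\be$ with $\de=\ga-\be>0$, and once to $A$ on $\dot X_\ga$ with $\de=\be-\ga<0$; in both applications the perturbation space is $V$. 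Intersecting the two resulting neighborhoods gives an open $U \subseteq V$ containing $A$ such that every $B \in U$ is R-sectorial of angle strictly less than $\om$ and has uniformly bounded $\mathcal H^\infty(S_\ph)$ calculus on both $\dot X_\be$ and $\dot X_\ga$, with $B$-fractional domains agreeing with $\dot X_{\be+r}$ and $\dot X_{\ga+r}$ (with uniform norm equivalences).

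By complex interpolation of the operator-norm bounds, each $B \in U$ inherits uniformly bounded $\mathcal H^\infty(S_\ph)$ calculus on every intermediate space $\dot X_\alpha$, $\alpha \in [\be,\ga]$, and its fractional domains based at $\dot X_\alpha$ agree with $\dot X_{\alpha+r}$ on the relevant range of $r$. Replaying the proof of \autoref{lem:smooth}.\ref{lem:smooth:c} with base $\dot X_s$ for any $s \in [\be,\ga]$, using \autoref{lem:holomorphic}.\ref{lem:holomorphic:d} together with the uniform bounds just obtained, yields the holomorphicity of
\begin{equation*}
U \ni B \mapsto f(B) \in L(\dot X_s,\dot X_{s+r})
\end{equation*}
for all $r \in [0,1]$ and all $f\colon S_\ph \to \mathbb C$ with $\sup_{\la \in S_\ph}|\la^r f(\la)|<\infty$.

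To extend to arbitrary $r$ with $s,\,s+r \in [\be,\ga+1]$, I argue by iterated composition. Writing $f(\la) = \la^{-r}\cdot(\la^r f(\la))$ and invoking the functional calculus reduces the task to the case $f(\la)=\la^{-r}$, since the bounded factor $\la^r f(\la)$ is covered by the previous paragraph. For this, choose $n \in \mathbb N$ with $|r|/n \leq 1$, set $t_k = s + (k/n)r$ for $k=0,\dots,n$, and note that $t_k \in [\be,\ga+1]$ by convexity. Each step $t_{k-1}\to t_k$ has increment $\si := r/n \in [-1,1]$. If $\si \geq 0$, the step is realized by $B^{-\si} \in L(\dot X_{t_{k-1}},\dot X_{t_k})$, holomorphic on $U$ by the previous paragraph (with base $t_{k-1} \in [\be,\ga]$, since $t_{k-1}=\ga+1$ forces $\si=0$). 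If $\si < 0$, the functional calculus gives $B^{-\si} = B\cdot B^{-(1+\si)} = B^{-(1+\si)}\cdot B$ with $1+\si \in [0,1]$: depending on whether $t_{k-1} \in [\be,\be+1)$ or $t_{k-1} \in [\be+1,\ga+1]$, one chooses the decomposition whose intermediate space lies in $[\be,\ga+1]$, so that both factors $B^{-(1+\si)}$ and $B$ are holomorphic on $U$ (the latter via $V \hookrightarrow L(\dot X_{\alpha+1},\dot X_\alpha)$ for $\alpha \in [\be,\ga]$). Chaining these $n$ steps and using that composition of holomorphic operator-valued maps is holomorphic in the sense of convenient calculus (\autoref{sec:convenient}) completes the proof. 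The main obstacle lies in the first two paragraphs, namely propagating bounded $\mathcal H^\infty$ calculus and the fractional-domain identification from the two endpoint scales $\dot X_\be,\dot X_\ga$ to all intermediate scales with uniform bounds in $B$; this is a bookkeeping argument based on the interpolation characterization of fractional domains \cite{kalton2006perturbation,lunardi2018interpolation}, parallel to step~(a3) of the proof of \autoref{lem:smooth}.
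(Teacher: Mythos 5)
Your overall strategy is the same as the paper's: establish the $r\in[0,1]$ case from \autoref{lem:smooth} by shifting the base space along the scale $(\dot X_\al)_{\al\in[\be,\ga]}$, then propagate to general $r$ by composing fractional powers. Your first two paragraphs are fine and, if anything, slightly more explicit than the paper's on the uniformity of the neighborhood $U$: you pin $U$ down by the two endpoint applications of \autoref{lem:smooth} at $\dot X_\be$ and $\dot X_\ga$ (both with perturbation space exactly $V$) and then interpolate the uniform $\mathcal H^\infty$ bounds to all intermediate $\dot X_\al$, whereas the paper invokes the lemma for each $s\in[\be,\ga]$ and leaves the uniformity implicit.

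The third paragraph has a genuine gap. For $\si=r/n\geq 0$ you assert that the base $t_{k-1}$ of each step lies in $[\be,\ga]$ because ``$t_{k-1}=\ga+1$ forces $\si=0$,'' but that argument does not exclude $t_{k-1}\in(\ga,\ga+1)$, which certainly occurs whenever $s>\ga$ or whenever the increasing chain $t_0<t_1<\dots<t_n$ climbs past $\ga$ on its way to $s+r\in(\ga,\ga+1]$ --- both allowed by the hypothesis $s,s+r\in[\be,\ga+1]$. For such $t_{k-1}$ your ``previous paragraph'' cannot be applied: there is no nonzero $\de$ with $t_{k-1},t_{k-1}+\de\in[\be,\ga]$, so \autoref{lem:smooth} is not available at base $\dot X_{t_{k-1}}$, and the interpolation embedding $V\hookrightarrow L(\dot X_{\al+1},\dot X_\al)$ only holds for $\al\in[\be,\ga]$. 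The same problem already infects your reduction to $f(\la)=\la^{-r}$: when $s>\ga$, the bounded factor $(\la^r f(\la))(B)\in L(\dot X_s,\dot X_s)$ is outside the covered range, and commuting it past $B^{-r}$ merely relocates the problem to $s+r$, which may again exceed $\ga$. (A milder instance of the same issue arises in the $\si<0$ branch when $\ga-\be<1$, where $t_{k-1}\in(\ga,\be+1)$ slips through both of your subcases.) The paper resolves the boundary case $s\in(\ga,\ga+1]$, $r\geq 0$ by the factorization
\begin{equation*}
f(B)\colon \dot X_s \xrightarrow{(B^{\ga-s})^{-1}} \dot X_\ga \xrightarrow{\;B^{\ga-s}f(B)\;} \dot X_{s+r},
\end{equation*}
where the first arrow is holomorphic by holomorphicity of inversion, and the second is \autoref{lem:smooth} applied at base $\dot X_\ga$ to the function $\la\mapsto\la^{\ga-s}f(\la)$, whose decay exponent $r+s-\ga$ lies in $[0,1]$. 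Your iterated-composition argument would go through once you insert this (or an equivalent) detour through $\dot X_\ga$ whenever some $t_{k-1}$ exceeds $\ga$; as written it does not.
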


\begin{proof} 
As $A$ has bounded $H^\infty(S_\om)$ calculus, the fractional domain spaces are complex interpolation spaces  \cite[Proposition~2.2]{kalton2006perturbation}.
Thus, $V$ is continuously embedded in all intermediate spaces $L(\dot X_{\al+1},\dot X_\al)$ with $\al\in(\be,\ga)$. 
We first focus on the case $r\geq 0$. 
For each $s \in [\be,\ga]$ and $r \in [0,1]$ the statement follows from \cref{lem:smooth} applied to the operator $A \in L(\dot X_{s+1},\dot X_s)$.
The conditions of the \lcnamecref{lem:smooth} are satisfied because there is always space below or above the interval $[s,s+1]$ within the larger interval $[\be,\ga+1]$.
The statement can be extended to higher values of $r$ by composition with integer powers of $B$.
This shows that the statement holds for all $s \in [\be,\ga]$ and nonnegative $r$ with $s+r \leq \ga+1$. 
The remaining case where $s \in (\ga,\ga+1]$ and $r$ is nonnegative with $s+r \leq \ga+1$ is covered by writing 
\begin{equation*}
f(B)\colon \dot X_s \xrightarrow{(B^{\ga-s})\i} \dot X_\ga \xrightarrow{B^{\ga-s} f(B)}\dot X_r,	
\end{equation*}
where the first arrow is holomorphic in $B$ because inversion is holomorphic, and the second arrow is holomorphic in $B$ thanks to \cref{lem:smooth} applied to the function $\la\mapsto \la^{\ga-s}f(\la)$ and the operator $A \in L(\dot X_{\ga+1},\dot X_\ga)$. 
Thus, we have shown the statement for all $r\geq 0$. The corresponding statement for $r\leq 0$ can be obtained by writing
\begin{equation*}
f(B)\colon \dot X_s \xrightarrow{B^r f(B)} \dot X_s \xrightarrow{(B^r)\i}\dot X_r,	
\end{equation*}
where the first arrow is holomorphic in $B$ thanks to \cref{lem:smooth} applied to the function $\la\mapsto \la^r f(\la)$, and the second arrow is holomorphic in $B$ because inversion is holomorphic. 
\end{proof}

\begin{remark}[Real Banach spaces]
The results in \cref{lem:holomorphic,lem:smooth} generalize to real Banach spaces $X$ as follows.
The resolvent mappings in \cref{lem:holomorphic}.\ref{lem:holomorphic:b}--\ref{lem:holomorphic:c} are real analytic because real analytic mappings between real Banach spaces extend to holomorphic mappings on small complex neighborhoods. 
This implies real analyticity of the resolvent integrals in $L(\dot X_0,\dot X_{<r})$ and $L(\dot X_{>1-r},\dot X_1)$, i.e., with a loss of regularity.
\cref{lem:holomorphic}.\ref{lem:holomorphic:d}, where there is no loss of regularity, generalizes from holomorphic to real analytic curves provided that the bound on $f(B(z))$ holds not only for real $z$, but also for nearby $z$ with small imaginary part. 
This can be difficult to verify if the holomorphic extension is not given explicitly. 
This problem is settled in \cref{lem:smooth}, which implies for real Banach spaces $X$ and $V$ that the functional calculus $B\mapsto f(B)$ is real analytic.
\end{remark}

\section{Perturbative spectral theory of Laplace operators}
\label{sec:perturbative}

In this section the perturbative spectral theory of \cref{sec:sectorial} is applied to the particular case of Laplace operators on compact Riemannian manifolds. 
The perturbations are taken with respect to the Riemannian metric in Sobolev topologies.
We first present some auxiliary results about functional calculus and fractional domain spaces of Laplace operators (\cref{lem:functional,lem:fractional}) and then prove our main result (\cref{thm:perturbations}) on perturbations of fractional Laplacians. 

The following \lcnamecref{lem:functional} describes the functional calculus of the Laplace operator associated to a fixed metric. 
The Laplacian is considered as an operator from $\Ga_{H^1}(E)$ to $\Ga_{H^{-1}}(E)$ because this is the only option which works simultaneously for all Sobolev regularities $\al\geq1$ of the metric.

\begin{lemma}[Functional calculus of Laplacians]
\label{lem:functional}
Let $\al\in (m/2,\infty)$ with $\alpha\geq 1$, 
let $g \in \Met_{H^\al}(M)$, 
and let $E$ be a natural first order vector bundle over $M$.
Then the following statements hold: 
\begin{enumerate}
\item 
\label{lem:functional:a}
The operator $1+\De^g\colon \Ga_{H^1}(E)\to\Ga_{H^{-1}}(E)$ is invertible, and the following bilinear form is an equivalent scalar product on $\Gamma_{H^{-1}}(E)$:
\begin{equation*}
\Ga_{H^{-1}}(E)\times \Ga_{H^{-1}}(E) \ni (h,k) \mapsto \langle (1+\De^g)\i h,k\rangle_{H^0(g)} \in \mathbb R,
\end{equation*}
where $\langle\cdot,\cdot\rangle_{H^0(g)}\colon \Ga_{H^1}(E)\times\Ga_{H^{-1}}(E)\to\mathbb R$ is the $H^0(g)$ duality of \cref{lem:volume}.
We will write $\Ga_{H^{-1}(g)}(E)$ for the space $\Ga_{H^{-1}}(E)$ with this scalar product. 

\item 
\label{lem:functional:b}
The operator $1+\De^g$ with domain $\Ga_{H^1}(E)$ is unbounded self-adjoint on the space $\Ga_{H^{-1}(g)}(E)$ and has a compact inverse. Thus, there exists an orthonormal basis of eigenvectors $(e_i)_{i\in\mathbb N}$ in $\Ga_{H^{-1}(g)}(E)$ and a non-decreasing sequence of eigenvalues $(\la_i)_{i\in\mathbb N}$ in $(0,\infty)$ such that
\begin{equation*}
\forall i \in \mathbb N: 
\quad
e_i \in \Ga_{H^1}(E), 
\quad
(1+\Delta^g) e_i = \lambda_i e_i.
\end{equation*}

\item
\label{lem:functional:c}
For each function $f\colon \{\lambda_1,\lambda_2,\dots\} \to \mathbb R$ the following is a densely defined self-adjoint linear operator on $\Ga_{ H^{-1}(g)}(E)$:
\begin{equation*}
f(1+\De^g)\colon
\left\{\begin{aligned}
\on{Dom}(f(1+\Delta^g)) &\to \Ga_{ H^{-1}}(E), 
\\
h &\mapsto \sum_{i\in\mathbb N}\langle h_i,e_i\rangle_{\Ga_{ H^{-1}(g)}(E)} f(\lambda_i) e_i,
\end{aligned}\right.\end{equation*}
where
\begin{equation*}
\on{Dom}(f(1+\Delta^g)) = \left\{h \in \Ga_{H^{-1}(g)}(E); \sum_{i\in\mathbb N} \langle h_i,e_i\rangle^2_{\Ga_{ H^{-1}(g)}(E)} f(\lambda_i)^2<\infty\right\}.
\end{equation*}

\item 
\label{lem:functional:d}
Let $\ph \in (0,\pi)$, 
recall that $S_\ph := \{z \in \mathbb C\setminus\{0\}: |\arg z|<\ph\}$ denotes the open sector of angle $\pm\ph$ about the positive real axis,
and let $f\colon S_\ph\to\mathbb C$ be a holomorphic function which satisfies for some $r \in (0,\infty)$ that $
\sup_{\lambda \in S_\ph}|\lambda^r f(\lambda)|<\infty$.
Then the operator $f(1+\De^g) \in L(\Ga_{ H^{-1}(g)}(E))$ coincides with the following resolvent integral for any $\om\in(0,\ph)$ and any closed centered ball $\bigcirc$ in the resolvent set of $1+\De^g$:
\begin{equation*}
f(1+\Delta^g)=-\frac1{2\pi i}\int_{\partial(S_{\om}\setminus\bigcirc)} f(\lambda)R_\la(1+\Delta^g) d\lambda \in L(\Ga_{H^{-1}(g)}(E)),
\end{equation*}
where the integral converges in $L(\Ga_{H^{-1}(g)}(E))$. 
\end{enumerate}
\end{lemma}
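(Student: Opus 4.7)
I would reduce the invertibility and spectral decomposition to the variational theory of quadratic forms. Introduce the bounded symmetric bilinear form
\begin{equation*}
a_g(u,v) := \langle u,v\rangle_{H^0(g)} + \langle \na^g u, \na^g v\rangle_{H^0(g)}
\end{equation*}
on $\Ga_{H^1}(E)\times\Ga_{H^1}(E)$, which is well-defined by \autoref{lem:volume} and \autoref{lem:covariant} (the hypothesis $\al\geq 1$ ensures $\na^g$ maps $\Ga_{H^1}$ into $\Ga_{H^0}$). Since $g$ and $g\i$ are continuous and strictly positive on the compact manifold $M$, they are pointwise uniformly comparable to their smooth background counterparts, so $a_g(u,u)^{1/2}$ is equivalent to the standard norm on $\Ga_{H^1}(E)$. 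The Riesz representation theorem then produces a topological isomorphism $T\colon\Ga_{H^1}(E)\to\Ga_{H^{-1}}(E)$ with $a_g(u,v)=\langle Tu,v\rangle_{H^0(g)}$, and integration by parts identifies $T$ with $1+\De^g$, proving its invertibility. The expression in (a) equals $a_g((1+\De^g)\i h,(1+\De^g)\i k)$, hence is the inner product on $\Ga_{H^{-1}}(E)$ dual to $a_g$, and is therefore equivalent to the standard $H^{-1}$ inner product. For (b), Rellich-Kondrachov makes $(1+\De^g)\i$ a compact operator on $\Ga_{H^{-1}(g)}(E)$, and it is self-adjoint in the $H^{-1}(g)$ scalar product because $T$ is a unitary map from $(\Ga_{H^1}(E),a_g)$ to $(\Ga_{H^{-1}(g)}(E),\langle\cdot,\cdot\rangle_{H^{-1}(g)})$. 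The spectral theorem for compact self-adjoint operators then yields an orthonormal eigenbasis $(e_i)$ with eigenvalues $1/\la_i$, where $\la_i\geq 1$ because $\De^g\geq 0$, and the $e_i$ lie in $\Ga_{H^1}(E)$ as images of $(1+\De^g)\i$.

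\textbf{Plan for (c) and (d).} Part (c) is the standard functional calculus for an unbounded self-adjoint operator with purely discrete spectrum. The main work is in (d). Since $1+\De^g$ has spectrum $\{\la_i\}\subset[1,\infty)$ and is self-adjoint on $\Ga_{H^{-1}(g)}(E)$, the spectral theorem gives the resolvent bound
\begin{equation*}
\|R_\la(1+\De^g)\|_{L(\Ga_{H^{-1}(g)}(E))} \leq \dist(\la,[1,\infty))\i,
\end{equation*}
which decays like $|\la|\i$ on the rays of $\partial(S_\om\setminus\bigcirc)$ and is bounded on the arc of $\bigcirc$. Combined with $|f(\la)|\leq C|\la|^{-r}$ for some $r>0$, the integrand is dominated by $C'|\la|^{-1-r}$ on the rays, giving absolute operator-norm convergence of the integral on $\Ga_{H^{-1}(g)}(E)$. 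To identify the integral with $f(1+\De^g)$ I would evaluate both sides on an eigenvector $e_i$: the spectral side gives $f(\la_i)e_i$, while the integral side reduces, via $R_\la(1+\De^g)e_i=(\la_i-\la)\i e_i$, to the scalar contour integral $\tfrac{1}{2\pi i}\int_{\partial(S_\om\setminus\bigcirc)}f(\la)(\la-\la_i)\i\,d\la$. With $\partial(S_\om\setminus\bigcirc)$ carrying its standard orientation (domain on the left, hence counterclockwise around each $\la_i\in[1,\infty)$), Cauchy's integral theorem, applied after closing the contour at infinity using the $|\la|^{-1-r}$ decay, evaluates this to $f(\la_i)$. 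Hence the two bounded operators coincide on the dense total set $\{e_i\}$ and therefore on all of $\Ga_{H^{-1}(g)}(E)$.

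\textbf{Main obstacle.} The most delicate step is the coercivity of $a_g$ and the resulting norm equivalence in (a) for a metric of only Sobolev regularity; this relies crucially on $\al>m/2$, so that the Sobolev embedding yields continuous and uniform pointwise upper and lower bounds on $g$ and $g\i$ over the compact manifold $M$. Everything else is a fairly routine application of Lax-Milgram, Rellich-Kondrachov compactness, the spectral theorem for compact self-adjoint operators, and Cauchy's integral theorem. A small amount of bookkeeping of the orientation conventions on $\partial(S_\om\setminus\bigcirc)$ and of the sign in the resolvent definition $R_\la(A)=(A-\la)\i$ used in the excerpt is needed to get the correct overall sign in (d), but this follows the standard conventions of \cite[Section~2.5.1]{Haase2006}.
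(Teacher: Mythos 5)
Your treatment of parts (c) and (d) essentially matches the paper's: identify the functional calculus via the spectral theorem for compact self-adjoint operators, obtain sectoriality from the resolvent bound $\|R_\la(1+\De^g)\|\leq\dist(\la,[1,\infty))^{-1}$, verify absolute convergence of the contour integral from the decay of $f$, and match the two calculi by a residue/Cauchy argument on the eigenbasis. For parts (a) and (b) you take a genuinely different route. The paper proves invertibility of $1+\De^g\colon\Ga_{H^1}(E)\to\Ga_{H^{-1}}(E)$ by index theory: the operator is injective (strict positivity) and has vanishing index (homotopy $t\mapsto t+\De^g$ to $\De^g$, Fredholm of index zero by \autoref{thm:bochner}), hence invertible; the norm equivalence on $\Ga_{H^{-1}}(E)$ is then established \emph{a posteriori} via the explicit isomorphism $I^{g,\hat g}$. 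You instead set up the quadratic form $a_g$ and appeal to Riesz/Lax--Milgram, which requires coercivity \emph{a priori}.

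That coercivity step is where your argument has a genuine gap. You assert that pointwise uniform comparability of $g,g^{-1}$ to a smooth $\hat g$ (coming from the Sobolev embedding $H^\al\hookrightarrow C^0$) implies that $a_g(u,u)^{1/2}$ is equivalent to $\|u\|_{H^1}$. Replacing the fiber inner product $g(\cdot,\cdot)$ and the density $\vol^g$ by their $\hat g$ counterparts is indeed harmless. But on a nontrivial first-order natural bundle $E$ the connection $\na^g$ also depends on $g$: one has $\na^g=\na^{\hat g}+A^g$ with $A^g$ built from Christoffel symbols that are only of class $H^{\al-1}$ (\autoref{lem:covariant}). When $\al$ is close to $m/2$, or in the allowed boundary case $\al=1$, the space $H^{\al-1}$ need not embed into $L^\infty$, so the module property (\autoref{thm:module}) gives only $\|A^g u\|_{H^0}\lesssim\|u\|_{H^1}$ --- the very quantity you are trying to bound, which cannot be absorbed. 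To close the gap you would need a G{\aa}rding-type interpolation estimate $\|A^g u\|_{H^0}\leq\epsilon\|u\|_{H^1}+C_\epsilon\|u\|_{H^0}$ (using that multiplication by $A^g\in H^{\al-1}$ is compact from $\Ga_{H^1}(E)$ to $\Ga_{H^0}(T^*M\otimes E)$ whenever $\al>m/2$), or a compactness-contradiction argument. Neither is difficult, but as written the coercivity does not follow from the pointwise comparability of $g$ and $\hat g$ alone. Your ``main obstacle'' paragraph identifies the pointwise bounds on $g$ as the delicate point, whereas the real delicacy is the $g$-dependence of $\na^g$; the paper's Fredholm route is precisely designed to avoid this issue.
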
 

\begin{proof}
\begin{enumerate}[wide]
\item 
The operator $\De^g$ is non-negative symmetric with respect to the $H^0(g)$ pairing $\langle\cdot,\cdot\rangle_{H^0(g)}$ of \cref{lem:volume}.\ref{lem:volume:b} in the following sense: 
\begin{equation*}
\forall h,k\in\Ga_{H^1}(E):\qquad \langle \De^g h,k\rangle_{H^0(g)} = \langle h,\De^g k\rangle_{H^0(g)} ,\qquad \langle \De^g h,h\rangle_{H^0(g)} \geq 0.
\end{equation*}
This can be seen via approximation by smooth $g,h,k$ using the continuity of $g\mapsto\langle\cdot,\cdot\rangle_{H^0(g)}$ established in \cref{lem:volume} and the continuity of $g\mapsto \De^g$ established in \cref{thm:bochner}.
This implies that $1+\De^g$ is strictly positive and thus injective. 
As an injective operator it is semi-Fredholm, which implies that its index is well-defined in $[-\infty,\infty)$. 
It actually has vanishing index because the curve $t\mapsto t+\De^g$ deforms it continuously into the Laplace operator, which has vanishing index by \cref{thm:bochner}.\ref{thm:bochner:a}.
Thus, $1+\De^g\colon \Ga_{H^1}(E)\to\Ga_{H^{-1}}(E)$ is continuously invertible. 
Therefore, the bilinear form in \ref{lem:functional:a} is a weak scalar product, which we denote by $\langle\cdot,\cdot\rangle_{\Ga_{H^{-1}(g)}(E)}$.
For any two metrics $g,\hat g \in \Met_{H^\al}(M)$, let 
\begin{equation*}
I^{g,\hat g} := \left(h\mapsto (1+\De^{\hat g}) \frac{\vol^g}{\vol^{\hat g}} \hat g\i g (1+\De^g)\i h \right) \in L(\Ga_{H^{-1}}(E)).
\end{equation*}
Then $I^{g,\hat g}$ is continuous with continuous inverse $I^{\hat g,g}$, the scalar products induced by $g$ and $\hat g$ are related by
\begin{equation*}
\langle h,k\rangle_{\Ga_{H^{-1}(g)}(E)} = \langle I^{g,\hat g} h,k\rangle_{\Ga_{H^{-1}(\hat g)}(E)},
\end{equation*}
and the norms induced by $g$ and $\hat g$ are related by
\begin{align*}
\|\Id\|_{L(\Ga_{H^{-1}(\hat g)}(E),\Ga_{H^{-1}(g)}(E))}
&\leq
\|I^{g,\hat g}\|^{1/2}_{L(\Ga_{H^{-1}(\hat g)}(E))},
\\
\|\Id\|_{L(\Ga_{H^{-1}(g)}(E),\Ga_{H^{-1}(\hat g)}(E))}
&\leq 
\|I^{g,\hat g}\|^{1/2}_{L(\Ga_{H^{-1}(\hat g)}(E))}
\|I^{\hat g,g}\|_{L(\Ga_{H^{-1}(\hat g)}(E))}.
\end{align*}
Thus, these norms are equivalent. 
Moreover, it is well-known that the norm induced by any smooth metric $\hat g \in \Met(M)$ is equivalent to the norm on $\Ga_{H^{-1}}(E)$. 
This concludes the proof of \ref{lem:functional:a}.

\item 
The operator $1+\De^g\colon \Ga_{H^1}(E)\to\Ga_{H^{-1}}(E)$ is symmetric with respect to the $H^0(g)$ pairing and invertible by \ref{lem:functional:a}.
This implies that its inverse is symmetric with respect to the scalar product on $\Ga_{H^{-1}(g)}(E)$ and everywhere defined, thus self-adjoint. 
Therefore, also the operator $1+\De^g$ is self-adjoint as an unbounded linear operator on $\Ga_{ H^{-1}(g)}(E)$.
Its inverse is a compact operator because $\Ga_{H^1}(E)$ is compactly embedded in $\Ga_{H^{-1}}(E)$.
Thus, the spectral properties of compact self-adjoint positive operators imply \ref{lem:functional:b}. 

\item follows from the well-known functional calculus for unbounded self-adjoint linear operators; see e.g.\@ \cite[Theorem~VII.3.2]{werner2005funktionalanalysis}. 

\item is the holomorphic functional calculus for invertible sectorial operators described in \cref{sec:sectorial}. 
The operator $1+\De^g$ is invertible by \ref{lem:functional:b}.
Its eigenvalues are contained in $[1,\infty)$, and the norm of its resolvent can be estimated by the distance to the closest eigenvalue: for any $\om\in(0,\pi)$,
\begin{align*}
\sup_{\lambda\in\mathbb C \setminus \overline{S_\om}} \lVert\lambda R_\la(1+\Delta^g)\rVert_{L(\Ga_{H\i}(E))} 
\leq
\sup_{\lambda\in \partial S_{\om}} \frac{|\lambda|}{\dist(\lambda,[1,\infty))}
< \infty.
\end{align*}
Thus, $1+\De^g$ is sectorial of angle zero.
Together with the assumed decay of $f$ this yields
\begin{equation*}
\int_{\partial(S_{\om}\setminus\bigcirc)} |f(\la)|\ \lVert R_\la(1+\Delta^g)\rVert_{L(\Ga_{H\i}(E))} d\lambda <\infty.
\end{equation*}
This shows convergence of the resolvent integral in $L(\Ga_{H\i}(E))$.
The holomorphic functional calculus coincides with the one described in \ref{lem:functional:c} thanks to Cauchy's residual theorem because the region $S_{\om}\setminus\bigcirc$ contains all eigenvalues of $1+\De^g$ in its interior, the resolvent is holomorphic away from the eigenvalues, and the residuals of the resolvent at the eigenvalues are projections onto the corresponding eigenspaces. 
\qedhere
\end{enumerate}
\end{proof}

\subsection{Fractional domain spaces}
\label{sec:domain}
Let $\al\in(m/2,\infty)$ with $\al\geq 1$,
let $g \in \Met_{H^\al}(M)$,
let $E$ be a natural first order vector bundle over $M$,  
let $A$ be the self-adjoint positive linear operator $1+\De^g$ on $\Ga_{H^{-1}}(E)$ with $\on{Dom}(A)=\Ga_{H^1}(E)$, 
and let $(\dot X_r)_{r\in\mathbb R}$ be the fractional domain spaces of $A$ (see \cref{sec:sectorial2}). 
Note that these spaces are Hilbert spaces \cite[Theorem~4.36]{lunardi2018interpolation}, which coincide with the Bessel potential spaces for smooth Riemannian metrics $g$.
For any $r \in \mathbb R$, we define  
\begin{equation*}
\Ga_{H^r(g)}(E):=\dot X_{(r+1)/2}
\end{equation*}
with equality of norms.  
This notation is justified by \cref{lem:fractional}.\ref{lem:fractional:b} below, which establishes an isomorphism between $\Ga_{H^r(g)}(E)$ and $\Ga_{H^r}(E)$ for certain values of $r$. 
It should be kept in mind, however, that these spaces are in general not isomorphic for other values of $r$ and never isometric. 

Note the shift in the scales of spaces $\Ga_{H^r(g)}(E)$ and $\dot X_{(r+1)/2}$. 
This shift comes from the fact that $A$ is the Laplacian on $\Ga_{H^{-1}}(E)$; it would disappear if $A$ was the Laplacian on $\Ga_{H^0}(E)$. 
Either way yields the same spaces $\Ga_{H^r(g)}(E)$ by \cite[Proposition~2.1]{kalton2006perturbation}, but the second construction requires higher Sobolev regularity $\al\geq 2$ instead of $\al\geq 1$.

\begin{lemma}[Fractional Laplacian]
\label{lem:fractional}
Let $\al\in (m/2,\infty)$ with $\alpha\geq 1$, let $g \in \Met_{H^\al}(M)$ and let $E$ be a natural first order vector bundle over $M$.
Then the following statements hold: 
\begin{enumerate}
\item 
\label{lem:fractional:a}
For all $r,s\in\mathbb R$, the operator $(1+\De^g)^{(s-r)/2}:  \Ga_{H^s(g)}(E)\to \Ga_{H^r(g)}(E)$
is an isometry with the same eigenfunctions $(e_i)$ and eigenvalues $(\la_i^{(s-r)/2})$ as in \cref{lem:functional}. 

\item
\label{lem:fractional:b}
For all $s \in [-\al,\al]$, the identity on $\Ga(E)$ extends to a bounded linear map $\Ga_{H^s(g)}(E)\to\Ga_{H^s}(E)$ with bounded inverse such that the following function is locally bounded:
\begin{equation*}
\Met_{H^\al}(M) \ni g \mapsto \|\on{Id}\|_{L(\Ga_{H^s(g)}(E),\Ga_{H^s}(E))} + \|\on{Id}\|_{L(\Ga_{H^s}(E),\Ga_{H^s(g)}(E))} \in \mathbb R.
\end{equation*}

\item
\label{lem:fractional:c}
If $E$ is trivial, then \ref{lem:fractional:b} holds for all $s \in [-\al,\al+1]$.
\end{enumerate}
\end{lemma}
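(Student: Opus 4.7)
My plan is as follows. Part (a) is definitional once combined with the spectral decomposition from \autoref{lem:functional}. For (b), I establish the norm equivalence at the two anchor points $s=-1$ and $s=1$, extend it to $s\in[-1,1]$ by complex interpolation on Hilbert spaces, and then bootstrap in steps of two using $1+\De^g$ as a shift to cover the full range $[-\al,\al]$; local boundedness of the constants follows from the continuity results of \autoref{thm:bochner}.\ref{thm:bochner:b} together with openness of the set of invertible operators. Part (c) uses exactly the same strategy with the extended range of \autoref{thm:bochner}.\ref{thm:bochner:c}.

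For (a), by definition $\Ga_{H^s(g)}(E)=\dot X_{(s+1)/2}$ with norm $\|A^{(s+1)/2}\cdot\|_X$, where $A=1+\De^g$ and $X=\Ga_{H^{-1}(g)}(E)$; hence $A^{(s-r)/2}$ is an isometric isomorphism $\dot X_{(s+1)/2}\to\dot X_{(r+1)/2}$. Expanding in the eigenbasis of \autoref{lem:functional}.\ref{lem:functional:b} gives the eigenvalue/eigenvector description.

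For (b), the equivalence at $s=-1$ is exactly \autoref{lem:functional}.\ref{lem:functional:a}; inspecting that proof shows the equivalence constants depend only on $\|(1+\De^g)\i\|_{L(\Ga_{H^{-1}},\Ga_{H^1})}$ and $\|1+\De^g\|_{L(\Ga_{H^1},\Ga_{H^{-1}})}$, both locally bounded in $g$ by \autoref{thm:bochner}.\ref{thm:bochner:b} and the continuity of inversion on the open set of invertible operators. The equivalence at $s=1$ follows from the identity $\|h\|_{\Ga_{H^1(g)}(E)}=\|(1+\De^g)h\|_{\Ga_{H^{-1}(g)}(E)}$, the equivalence at $s=-1$, and the fact that $1+\De^g\colon\Ga_{H^1}(E)\to\Ga_{H^{-1}}(E)$ is an isomorphism (Fredholm of index zero by \autoref{thm:bochner}.\ref{thm:bochner:a} and injective by positivity, cf.\ \autoref{lem:functional}.\ref{lem:functional:a}). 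Complex interpolation now fills in the gap: the scale $(\Ga_{H^s(g)}(E))_{s\in[-1,1]}$ is a complex interpolation scale because self-adjoint positive operators on Hilbert spaces have bounded $\mathcal H^\infty$ calculus (cf.\ \cite[Proposition~2.2]{kalton2006perturbation}), and the usual Sobolev scale is a standard complex interpolation scale; the equivalence of endpoints thus transfers to all intermediate values with uniform constants.

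To extend to $s\in[-\al,\al]$ I shift by two using $A=1+\De^g$. For $s\in[2-\al,\al]$ the operator $A\colon\Ga_{H^s}(E)\to\Ga_{H^{s-2}}(E)$ is Fredholm of index zero by \autoref{thm:bochner}.\ref{thm:bochner:a}; it is injective for $s\geq 1$ directly and for $s<1$ by $H^0(g)$-duality with the already treated case (its $H^0(g)$-adjoint is $A\colon\Ga_{H^{2-s}}(E)\to\Ga_{H^{-s}}(E)$ with $2-s\in(1,\al]$, hence an isomorphism), so it is itself an isomorphism with norms locally bounded in $g$. Combined with the isometric isomorphism $A\colon\Ga_{H^s(g)}(E)\to\Ga_{H^{s-2}(g)}(E)$ from (a), norm equivalence at $s-2$ propagates to $s$ via
\begin{equation*}
\|h\|_{\Ga_{H^s(g)}(E)} \sim \|Ah\|_{\Ga_{H^{s-2}(g)}(E)} \sim \|Ah\|_{\Ga_{H^{s-2}}(E)} \sim \|h\|_{\Ga_{H^s}(E)},
\end{equation*}
with locally bounded constants. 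Iterating this shift in both directions exhausts $[-\al,\al]$ after finitely many steps. For (c) the same argument applies with the extended bounded/Fredholm range $s\in[2-\al,\al+1]$ provided by \autoref{thm:bochner}.\ref{thm:bochner:c}, reaching $s\in[-\al,\al+1]$. The main delicate point I anticipate is securing the full isomorphism (and not just Fredholm index zero) statement for $1+\De^g$ across the entire Sobolev scale of low regularity, which hinges on the duality step above; once that is in place, the rest is interpolation and bookkeeping.
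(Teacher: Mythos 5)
Your proposal is correct and follows essentially the same route as the paper's own proof: anchor the norm equivalence at $s=-1$ using \autoref{lem:functional}.\ref{lem:functional:a} with continuity of the operator $I^{g,\hat g}$ supplying local boundedness, propagate by shifts of $2$ via the isometry $(1+\De^g)^k$ from \autoref{lem:fractional}.\ref{lem:fractional:a} together with the isomorphism property of $1+\De^g$ on the usual Sobolev scale, and fill in intermediate values by complex interpolation (using \cite[Proposition~2.2]{kalton2006perturbation} for the $\Ga_{H^s(g)}$ scale). The only stylistic difference is that you interpolate on $[-1,1]$ before shifting whereas the paper first shifts by $2k$ and then interpolates between arbitrary pairs, and you spell out the duality argument showing that $1+\De^g\colon\Ga_{H^s}(E)\to\Ga_{H^{s-2}}(E)$ is not merely Fredholm of index zero but actually an isomorphism for $s<1$ — a point the paper leaves implicit when it writes down $\|(1+\De^g)^{-k}\|$ across the scale.
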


\cref{lem:fractional} provides sufficient conditions for the equality of the fractional domain spaces and the usual Sobolev spaces; see points \ref{lem:fractional:b} and \ref{lem:fractional:c}. 
We repeat our warning that this equality may cease to hold when theses conditions are violated.

\begin{proof}
\begin{enumerate}[wide]
\item holds by the definition of the fractional domain spaces; see \cref{sec:domain}.

\item The statement follows from the following three claims: 
\begin{enumerate}[label={\bfseries Claim \arabic*:}, ref={\bfseries Claim \arabic*}, wide,  labelindent=0pt]
\item
\label{lem:fractional:claim1}
The statement holds for $s=-1$. 
This follows from \cref{lem:functional}.\ref{lem:functional:a}, noting that the operator $I^{g,\hat g}$ constructed in its proof depends continuously on $g \in \Met_{H^\al}(M)$.

\item 
\label{lem:fractional:claim2}
If the statement holds for $r \in[-\al,\al]$ and if $s=r+2k\in[-\al,\al]$ for some $k \in \mathbb Z$, then the statement holds for $s$.
To prove the claim, note that the following norms are finite and depend continuously on $g \in \Met_{H^\al}(M)$ by \ref{lem:fractional:a} and \cref{lem:functional}:
\begin{align*}
\hspace{2em}&\hspace{-2em}
\|\Id\|_{L(\Ga_{H^s(g)}(E),\Ga_{H^s}(E))}
=
\|(1+\Delta^g)^{-k}\|_{L(\Ga_{H^r(g)}(E),\Ga_{H^s}(E))}
\\&\leq
\|\Id\|_{L(\Ga_{H^r(g)}(E),\Ga_{H^r}(E))} \|(1+\Delta^g)^{-k}\|_{L(\Ga_{H^r}(E),\Ga_{H^s}(E))},
\\
\hspace{2em}&\hspace{-2em}
\|\Id\|_{L(\Ga_{H^s}(E),\Ga_{H^s(g)}(E))}
=
\|(1+\Delta^g)^k\|_{L(\Ga_{H^s}(E),\Ga_{H^r(g)}(E))}
\\&\leq
\|(1+\Delta^g)^k\|_{L(\Ga_{H^s}(E),\Ga_{H^r}(E))} \|\Id\|_{L(\Ga_{H^r}(E),\Ga_{H^r(g)}(E))},
\end{align*}

\item 
\label{lem:fractional:claim3}
If the statement holds for $s_1,s_2 \in \mathbb R$, then it holds for all $s$ in the convex hull of $\{s_1,s_2\}$. This is true because the scales of spaces $\Ga_{H^s}(E)$ and $\Ga_{H^s(g)}(E)$, $s \in \mathbb R$, are complex interpolation spaces by \cite[Theorem~7.4.4]{triebel1992theory2} and \cite[Proposition~2.2]{kalton2006perturbation}.
\end{enumerate}

\item follows by replacing the interval $[-\al,\al]$ in \ref{lem:fractional:claim2} by $[-\al,\al+1]$. 
\qedhere
\end{enumerate}
\end{proof}

Having identified the fractional domain spaces in \cref{lem:fractional}, we are ready to apply the general perturbative result in \cref{thm:smooth} to the present setting of Laplace operators associated to non-smooth metrics.

\begin{theorem}[Perturbations of functions of the Laplacian]
\label{thm:perturbations}
Let $\al\in (m/2,\infty)$ with $\al>1$, 
let $E$ be a natural first order vector bundle over $M$, 
let $r,s\in\mathbb R$ with $s,s+r\in[-\al,\al]$,
let $\ph \in (0,\pi)$,
and let $f$ be a holomorphic function on $S_\ph$ with $\sup_{\la \in S_\ph}|\la^{r/2} f(\la)|<\infty$.
Then the following map is real analytic:
\begin{equation*}
g\mapsto f(1+\Delta^g), \qquad \Met_{H^{\al}}(M) \to L(\Ga_{H^s}(E),\Ga_{H^{r+s}}(E)).
\end{equation*}
If $E$ is trivial, then this holds with $[-\al,\al]$ replaced by $[-\al,\al+1]$.
\end{theorem}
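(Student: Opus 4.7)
The plan is to apply \autoref{thm:smooth} with reference operator $A:=1+\De^{\hat g}$ for a fixed smooth background metric $\hat g\in\Met(M)$, acting on the complexification of the Hilbert space $X:=\Ga_{H^{-1}(\hat g)}(E)$. By \autoref{lem:functional}, $A$ is densely defined, invertible, and self-adjoint positive on $X$; hence it is R-sectorial of angle zero (sectoriality and R-sectoriality coincide on Hilbert spaces), and the spectral theorem gives it a bounded $\mathcal H^\infty(S_\om)$ calculus for every $\om\in(0,\pi)$. Since $\hat g$ is smooth, \autoref{lem:fractional} (applied with $\al$ arbitrarily large) identifies the fractional domain spaces of $A$ with the usual Sobolev spaces, $\dot X_r=\Ga_{H^{2r-1}}(E)$, for every $r\in\mathbb R$.

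Set $\be:=(1-\al)/2$ and $\ga:=(\al-1)/2$, so that $\be<\ga$ (as $\al>1$) and $[\be,\ga+1]=[(1-\al)/2,(\al+1)/2]$. Then $\dot X_{\be+1}=\Ga_{H^{2-\al}}(E)$, $\dot X_\be=\Ga_{H^{-\al}}(E)$, $\dot X_{\ga+1}=\Ga_{H^\al}(E)$ and $\dot X_\ga=\Ga_{H^{\al-2}}(E)$. Apply \autoref{thm:bochner}.\ref{thm:bochner:b} at the two regularities $s=2-\al$ and $s=\al$, both lying in the admissible window $[2-\al,\al]$: this shows that the map $g\mapsto 1+\De^g$ is real analytic from $\Met_{H^\al}(M)$ into $V:=L(\dot X_{\be+1},\dot X_\be)\cap L(\dot X_{\ga+1},\dot X_\ga)$.

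By \autoref{thm:smooth} combined with the Remark on real Banach spaces at the end of \autoref{sec:sectorial}, there is a neighborhood of $A$ in $V$ on which $B\mapsto f(B)\in L(\dot X_{s'},\dot X_{s'+r'})$ is real analytic, for every pair $s',s'+r'\in[\be,\ga+1]$ and every holomorphic $f$ on $S_\ph$ with $\sup_{\la\in S_\ph}|\la^{r'}f(\la)|<\infty$. Setting $s':=(s+1)/2$ and $r':=r/2$, the hypothesis $s,s+r\in[-\al,\al]$ translates exactly into $s',s'+r'\in[\be,\ga+1]$, the growth bound on $f$ becomes $\sup_{\la\in S_\ph}|\la^{r/2}f(\la)|<\infty$, and the target space is $L(\Ga_{H^s}(E),\Ga_{H^{s+r}}(E))$; composition with the real analytic map $g\mapsto 1+\De^g$ from the previous paragraph yields the desired real analyticity. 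For the trivial-bundle case, replace $\ga$ by $\al/2$ and invoke \autoref{thm:bochner}.\ref{thm:bochner:c}, which extends the real analyticity of $g\mapsto\De^g$ to Sobolev order $\al+1$, thereby covering the extended range $s,s+r\in[-\al,\al+1]$.

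The main obstacles are essentially bookkeeping: reconciling the abstract scale $\dot X_r$ (on which the sectorial theory is formulated) with the Sobolev scale via the shift $s=2r-1$, verifying via \autoref{lem:fractional} that the smooth reference metric makes these scales coincide for all $r$, and choosing $\be,\ga$ so that $[\be,\ga+1]$ covers precisely the regularities appearing in the statement while still lying within the window in which \autoref{thm:bochner} gives real analyticity of $g\mapsto 1+\De^g$. The remaining subtleties — R-sectoriality with bounded $\mathcal H^\infty$ calculus of the reference operator, and upgrading holomorphicity in \autoref{thm:smooth} to real analyticity — are handled respectively by the Hilbert-space spectral theorem and by the Remark at the end of \autoref{sec:sectorial}.
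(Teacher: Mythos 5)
Your proof correctly identifies the reparametrization between the abstract fractional-domain scale $\dot X_r$ and the Sobolev scale via $s = 2r-1$, the right choice $\be = (1-\al)/2$, $\ga = (\al-1)/2$, the verification of $\be < \ga$, and the translation of the growth exponent $r \mapsto r/2$. The bookkeeping for the trivial-bundle case is also correct. But there is a genuine gap in the final composition step.

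By applying \autoref{thm:smooth} at the single fixed reference operator $A = 1+\De^{\hat g}$, you obtain a neighborhood $U$ of $A$ in $V$ on which $B \mapsto f(B)$ is real analytic. Composing with the real analytic map $g\mapsto 1+\De^g$ then yields real analyticity of $g \mapsto f(1+\De^g)$ only on the preimage of $U$ in $\Met_{H^\al}(M)$, i.e., on a neighborhood of $\hat g$—not on all of $\Met_{H^\al}(M)$ as the theorem asserts. Nothing you have written lets you enlarge this to the whole space: real analyticity near a dense set of points does not propagate, and the radius of $U$ depends on the reference operator, so one cannot simply let $\hat g$ vary without controlling this dependence.

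The paper's intended argument avoids this by applying \autoref{thm:smooth} with $A = 1+\De^g$ for each $g\in\Met_{H^\al}(M)$ in turn. The hypotheses are satisfied for every such $g$ by \autoref{lem:functional}: $1+\De^g$ is densely defined, invertible, self-adjoint positive on $\Ga_{H^{-1}(g)}(E)$, hence R-sectorial of angle zero with bounded $\mathcal H^\infty$ calculus for every angle. Crucially, \autoref{lem:fractional}.\ref{lem:fractional:b} identifies the fractional domain spaces of $1+\De^g$ with the standard Sobolev spaces for Sobolev indices in $[-\al,\al]$ (and $[-\al,\al+1]$ for trivial $E$), which is exactly the range $[\be,\ga+1]$ you use. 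The identification "for every $r\in\mathbb R$" that you emphasize as the reason for choosing a smooth $\hat g$ is therefore not needed, and restricting to smooth background metrics is what creates the gap. With $A=1+\De^g$ for each $g$, the composition with the real analytic $g'\mapsto 1+\De^{g'}$ gives real analyticity near every point of $\Met_{H^\al}(M)$, and since real analyticity is local, the theorem follows.
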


\begin{proof}
By \cref{thm:bochner} the Laplacian is real analytically as a map
\begin{equation*}
\Met_{H^\al}(M) \ni g \mapsto 1+\De^g \in L(\Ga_{H^\al}(E),\Ga_{H^{\al-2}}(E))\cap L(\Ga_{H^{-\al+2}}(E),\Ga_{H^{-\al}}(E)).
\end{equation*}
Moreover, the functional calculus is real analytic by \cref{thm:smooth}. 
Note that the conditions of the \lcnamecref{thm:smooth} are satisfied because the assumption $\al>1$ ensures that $\be:=-\al$ and $\ga:=\al-2$ satisfy $\be<\ga$.
If $E$ is trivial, then \cref{thm:bochner} holds with $[-\al,\al]$ replaced by $[-\al,\al+1]$. 
\end{proof}

Recall from \cref{lem:derivative} that the directional derivative of the Laplacian with respect to the metric extends to Sobolev spaces of low regularity. 
This also applies to fractional Laplacians, as shown in the following \lcnamecref{lem:adjoint}. 
These results are used in the proof of \cref{thm:satisfies} below.

\begin{lemma}[Derivative of the fractional Laplacian] 
\label{lem:adjoint}
Let $\al\in (m/2,\infty)$ with $\al>1$, 
let $E$ be a natural first order vector bundle over $M$, 
let $\ph \in (0,\pi)$,
and let $f$ be a holomorphic function on $S_\ph$ which satisfies for some $p\in (1,\al]$ that $\sup_{\la \in S_\ph}|\la^p f(\la)|<\infty$.
Then the derivative of $P_g=f(1+\Delta^g)$ with respect to the metric $g$ extends to a real analytic map
\begin{equation*}
\Met_{H^\al}(M) \times \Ga_{H^{2p-\al}}(S^2T^*M))\ni (g,q) \mapsto D_{g,q}P_g \in L(\Ga_{H^\al}(E),\Ga_{H^{-\al}}(E)).
\end{equation*}
\end{lemma}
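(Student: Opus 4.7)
The plan is to derive a formula for $D_{g,q}P_g$ by differentiating the resolvent integral representation of $P_g$ given in \autoref{lem:functional}.\ref{lem:functional:d} and then to verify that this formula makes sense and is real analytic on the claimed spaces even for $q$ of very low Sobolev regularity. Applying the formal identity $D_{g,q}R_\la(A)=-R_\la(A)(D_{g,q}A)R_\la(A)$ to $A=1+\Delta^g$ under the integral yields the candidate expression
\begin{equation*}
D_{g,q}P_g \;=\; \frac{1}{2\pi i}\int_{\partial(S_\om\setminus\bigcirc)} f(\la)\, R_\la(1+\Delta^g)\,(D_{g,q}\Delta^g)\, R_\la(1+\Delta^g)\, d\la.
\end{equation*}
I would take this as the definition of $D_{g,q}P_g$ for non-smooth $q$ and verify (i)~absolute convergence of the integral in $L(\Ga_{H^\al}(E),\Ga_{H^{-\al}}(E))$, (ii)~real analyticity in $(g,q)$, and (iii)~agreement with the usual derivative from \autoref{thm:perturbations} whenever $q$ is smooth.

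For the integrability I would apply \autoref{lem:derivative} with parameters $r:=2p-\al$ and $s:=\al+2-2p$. The hypothesis $p\in(1,\al]$ ensures $r\in[2-\al,\al]$ and $s\in[2-r,\al]$ as required, and the conclusion is that
\begin{equation*}
(g,q)\mapsto D_{g,q}\Delta^g \in L(\Ga_{H^{\al+2-2p}}(E),\Ga_{H^{-\al}}(E))
\end{equation*}
is real analytic on $\Met_{H^\al}(M)\times \Ga_{H^{2p-\al}}(S^2T^*M)$. The two outer resolvents I would then control using the moment bounds from the proof of \autoref{lem:holomorphic}.\ref{lem:holomorphic:b}, transferred from the fractional domain spaces to ordinary Sobolev spaces via \autoref{lem:fractional}: locally uniformly in $g$,
\begin{align*}
\|R_\la(1+\Delta^g)\|_{L(\Ga_{H^\al}(E),\,\Ga_{H^{\al+2-2p}}(E))} &\leq C|\la|^{-p}, \\
\|R_\la(1+\Delta^g)\|_{L(\Ga_{H^{-\al}}(E))} &\leq C|\la|^{-1},
\end{align*}
where the first bound is the case $\tau=1-p\leq 0$ of the estimate $\|R_\la(A)\|_{L(\dot X_\si,\dot X_{\si+\tau})}\leq C|\la|^{\tau-1}$ for $\tau\in(-\infty,1]$, and the output space lies in the range where \autoref{lem:fractional} gives the equivalence $\Ga_{H^{\al+2-2p}(g)}(E)\simeq \Ga_{H^{\al+2-2p}}(E)$ because $\al+2-2p\in[2-\al,\al]$. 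Combined with the hypothesis $|f(\la)|\leq C|\la|^{-p}$, the integrand is dominated by $C|\la|^{-2p-1}$, which is integrable along $\partial(S_\om\setminus\bigcirc)$ since $2p+1>1$.

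Real analyticity of the result then follows because the integrand is jointly real analytic in $(g,q)$ with a locally uniform integrable majorant: real analyticity of $g\mapsto R_\la(1+\Delta^g)$ in the relevant operator topologies is the content of \autoref{lem:holomorphic}.\ref{lem:holomorphic:b} together with its real analytic version noted at the end of \autoref{sec:sectorial}, and real analyticity of $(g,q)\mapsto D_{g,q}\Delta^g$ is \autoref{lem:derivative}. Passing this through the integral is routine via convenient calculus: testing against bounded linear functionals on $L(\Ga_{H^\al}(E),\Ga_{H^{-\al}}(E))$ reduces the question to scalar-valued real analytic integrands dominated by a fixed integrable function. Consistency with the actual derivative when $q$ is smooth follows by differentiating the resolvent integral representation of $P_g$ directly in the operator topology furnished by \autoref{thm:perturbations}, where both differentiation and integration are legitimate and produce the same formula. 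The main obstacle is precisely making the formal derivative under the integral rigorous when $q$ has low (possibly negative) Sobolev regularity $2p-\al$; this is overcome by the precise mapping properties in \autoref{lem:derivative} combined with the extra $\la$-decay $|\la|^{-p}$ that the first resolvent supplies in exchange for losing $p-1$ units of $\dot X$-regularity.
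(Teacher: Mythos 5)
Your proposal takes a genuinely different route from the paper.  The paper factorizes the derivative of the functional calculus as $D_{A,B}f(A)=A^{\al-p}D_{A,A^{p-\al}B}f(A)$, which transfers the low regularity of the perturbation $B=D_{g,q}\De^g$ onto a power of $A$, so that the whole composition decomposes into pieces each covered by \autoref{thm:smooth}.  You instead estimate the double resolvent integral $\tfrac1{2\pi i}\int f(\la)R_\la\,(D_{g,q}\De^g)\,R_\la\,d\la$ directly, letting the \emph{first} resolvent act as a regularity \emph{downgrade} $\Ga_{H^\al}\to\Ga_{H^{\al+2-2p}}$ so that \autoref{lem:derivative} can be applied to the middle factor.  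The underlying idea --- compensate the low regularity of $D_{g,q}\De^g$ by moving along the Sobolev scale --- is the same, but the bookkeeping and the invocation of the functional-calculus machinery differ.  Your route is more direct and elementary in spirit; the paper's is more systematic because it reduces everything to a single application of \autoref{thm:smooth}, which already encapsulates all the perturbed resolvent estimates.

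Two technical remarks.  First, your stated resolvent bound
$\|R_\la(A)\|_{L(\dot X_\si,\dot X_{\si+\tau})}\leq C|\la|^{\tau-1}$ for $\tau\in(-\infty,1]$ is only correct for $\tau\in[0,1]$.  For $\tau<0$ there is no extra $\la$-decay from dropping regularity; the correct bound is $C|\la|^{-1}$ (one sees this already for scalar self-adjoint $A\ge 1$: $\sup_{t\ge 1}t^\tau|t-\la|^{-1}\sim|\la|^{-1}$ when $\tau\le 0$).  This does not break your argument, because $|f(\la)|\le C|\la|^{-p}$ with $p>1$ already provides enough decay: with the correct bounds the integrand is $O(|\la|^{-p-2})$, still integrable.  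But the claim as written is wrong and should be corrected.  Second, you gloss over how the estimates for $R_\la(1+\De^g)$ and their real analyticity in $g$ are obtained in the operator topologies $L(\Ga_{H^\al},\Ga_{H^{\al+2-2p}})$ and $L(\Ga_{H^{-\al}},\Ga_{H^{-\al}})$; \autoref{lem:holomorphic} as stated is formulated on the $[0,1]$ segment of the $\dot X$-scale (which under the identification of \autoref{sec:domain} corresponds to $\Ga_{H^{-1}(g)}$ and $\Ga_{H^{1}(g)}$).  Extending these to the scales you actually need, uniformly in a neighborhood of $g$, essentially requires reconstructing what \autoref{thm:smooth} already provides, so the apparent gain in simplicity is partly illusory.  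Addressing both points would make your proof complete and constitute a valid alternative to the paper's argument.
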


\begin{proof}
Let $X,Y,Z$ be the spaces of operators given by
\begin{align*}
X&=L(\Ga_{H^\al}(E),\Ga_{H^{\al-2}}(E))\cap L(\Ga_{H^{2-\al}}(E),\Ga_{H^{-\al}}(E)),
\\
Y&=L(\Ga_{H^\al}(E),\Ga_{H^{-\al+2(p-1)}}(E))\cap L(\Ga_{H^{\al-2p+2}}(E),\Ga_{H^{-\al}}(E)),
\\
Z&=L(\Ga_{H^\al}(E),\Ga_{H^{\al-2}}(E))\cap L(\Ga_{H^{\al-2p+2}}(E),\Ga_{H^{\al-2p}}(E)).
\end{align*}
Note that the conditions $\al>1$ and $p>1$ ensure that $X$, $Y$, and $Z$ are intersections of operator spaces on distinct Sobolev scales, as required in \cref{thm:smooth}.
Moreover, let $U$ be an open neighborhood of $1+\De^g \in X$ with $g \in \Met_{H^\al}(M)$ such that the holomorphic functional calculus is well-defined and holomorphic in the sense of \cref{thm:smooth}.
Then the desired map is the composition of the following two maps: 
\begin{gather*}
\Met_{H^\al}(M)\times \Ga_{H^{2p-\al}}(S^2T^*M) \in (g,q)\mapsto (1+\De^g,D_{g,q}\De^g) \in (X,Y),
\\
(U,Y) \ni (A,B) \mapsto D_{A,B}f(A) \in L(\Ga_{H^\al}(E),\Ga_{H^{-\al}}(E)).
\end{gather*} 
The first map is real analytic by \cref{thm:bochner} and \cref{lem:derivative}.
The second map needs some interpretation. 
Note that the identity
\begin{multline*}
\frac{1}{2\pi i}\int_{\partial(S_\om\setminus\bigcirc)} f(\la) R_\la(A)BR_\la(A) d\la
\\=
A^{\al-p}\frac{1}{2\pi i}\int_{\partial(S_\om\setminus\bigcirc)} f(\la) R_\la(A)A^{p-\al}BR_\la(A) d\la.
\end{multline*}
implies that
\begin{equation*}
\forall A \in U, \forall B \in Y\cap Z: 
\qquad 
D_{A,B}f(A)
=
A^{\al-p}D_{A,A^{p-\al}B}f(A).
\end{equation*}
The right-hand side is the composition of the following maps, which are real analytic by \cref{thm:smooth}:
\begin{gather*}
(U,Y) \ni (A,B) \mapsto (A,A^{p-\al}B) \in (U,Z),
\\
(U,Z) \ni (A,B) \mapsto (A,D_{A,B}f(A)) \in U\times L(\Ga_{H^\al}(E),\Ga_{H^{\al-2p}}(E))
\\
U\times L(\Ga_{H^\al}(E),\Ga_{H^{\al-2p}}(E)) \ni (A,B) \mapsto A^{\al-p}B \in L(\Ga_{H^\al}(E),\Ga_{H^{-\al}}(E))
\end{gather*}
This shows the statement for $p \in (1,\al]$.
Finally, the statement for $f(z)=z^p$ with $p=1$ follows directly from \cref{lem:derivative}.
\end{proof}

\section{Metrics on spaces of metrics}
\label{sec:metrics} %

This section is devoted to Riemannian geometry on spaces of Riemannian metrics. 
The theory developed in the previous sections will be used to establish wellposedness of the geodesic equation for a wide class of metrics, which are defined via the functional calculus of Laplace operators. 
Our main results, \cref{thm:satisfies,thm:wellposed} below,  close a gap in an earlier proof in \cite{Bauer2013c} for integer order metrics and generalize this result to a much wider class of metrics, including Sobolev metrics of fractional order. 

\subsection{Weak Riemannian metrics on \texorpdfstring{$\Met(M)$}{Met(M)}}
\label{sec:weak}

We consider $\Diff(M)$-invariant Riemannian metrics on $\Met(M)$ of the form 
\begin{align*}
G^P_g(h,k)&%
=\int_M \on{Tr}(g\i (P_g h) g\i k) \vol(g),
\end{align*}
\newline
where for each $g\in\Met(M)$, the operator
\begin{equation*}
P_g \colon \Ga(S^2T^*M) \to \Ga(S^2T^*M)
\end{equation*}
is positive and symmetric with respect to the $H^0(g)$ inner product, i.e., 
\begin{align*}
\int_M \on{Tr}(g\i (P_g h) g\i h) \vol(g)>0, \text{ iff } h\neq 0\in T_g\Met(M)\\
\int_M \on{Tr}(g\i (P_g h) g\i k) \vol(g)=\int_M \on{Tr}(g\i  h g\i P_g k) \vol(g);.
\end{align*}
Note, that these two conditions are necessary in order to ensure that $G^P$ is a Riemannian metric.
In the following we will refer to $P$ as an operator field.
Further conditions on this operator field are formulated in \cref{sec:conditions} below.

Including the operator field $P$ in the definition of the metric 
allows one to consider higher order metrics. In particular 
the setting encompasses the following examples:
\begin{align*}
G^P_g(h,k)&%
 =\int \on{Tr}(g\i h g\i k)\vol(g),\quad H^0\text{-metric}
\\
\text{or  }&=\int_M \on{Tr}\Big(g\i h g\i (1+\De^g)^p k\Big) \vol(g)  \qquad\text{Sobolev $H^p$ metric, }p\in \mathbb R_{>0}
\\
\text{or  }&=\int_M\on{Tr}\Big(g\i h g\i f(1+\De^g) k\Big) \vol(g)
\end{align*}
where 
$f$ is a suitable spectral function as considered in \cref{sec:sectorial,sec:perturbative}. 
Further metrics considered in the literature include curvature and volume weighted metrics, which can also be formulated in the present framework \cite{Clarke2013c}.

\begin{condition}
\label{sec:conditions}
We will frequently use the following conditions on the operator field $P$ and non-negative real numbers $\al,p$ with $\al>m/2$:
\begin{enumerate}
\item
\label{sec:conditions:a}
The operator field $P$ is smooth as a map
\begin{equation*}
\Met_{H^\al}(M)\ni g \mapsto P_g \in GL(\Ga_{H^\al}(S^2T^*M),\Ga_{H^{\al-2p}}(S^2T^*M)),	
\end{equation*}
where $GL$ denotes bounded linear operators with bounded inverse. 

\item 
\label{sec:conditions:b}
The operator field $P$ is $\Diff(M)$-equivariant in the sense that one has for all $\ph\in\Diff(M)$, $g \in \Met_{H^\al}(M)$, and $h\in\Ga_{H^\al}(S^2T^*M)$ that
\begin{equation*}
\ph^*(P_gh) = P_{\ph^*g}(\ph^*h).
\end{equation*}

\item 
\label{sec:conditions:c}
For each $g\in \Met_{H^\al}(M)$, the operator $P_g$ is nonnegative and symmetric with respect to the $H^0(g)$ inner product on $\Ga_{H^\al}(S^2T^*M)$, i.e., for all $h,k \in \Ga_{H^\al}(S^2T^*M)$:
\begin{align*}
&\int_M \on{Tr}(g\i P_g h g\i h)\vol(g) \geq 0,\\
&\int_M \on{Tr}(g\i P_g h g\i k)\vol(g) = \int_M \on{Tr}(g\i h g\i P_g k)\vol(g).
\end{align*}

\item
\label{sec:conditions:d}
The $H^0(g)$ adjoint of the derivative of $P$ with respect to the metric is well-defined as a smooth map
\begin{multline*}
\Met_{H^\al}(M)\times \Ga_{H^\al}(S^2T^*M)
\ni (g,h) \mapsto (D_{(g,\cdot)}P_gh)^*
\\
\in L(\Ga_{H^\al}(S^2T^*M),\Ga_{H^{\al-2p}}(S^2T^*M))
\end{multline*} 
such that the following relation is satisfied for all $g \in \Met_{H^\al}(M)$ and $h,k\in\Ga_{H^\al}(S^2T^*M)$:
\begin{equation*}
\int_M \on{Tr}\Big( g\i \big((D_{(g,q)}P_g)h\big) g\i k \Big)\vol(g)
=
\int_M \on{Tr}\big(g\i q g\i (D_{(g,\cdot)}P_gh)^*(k)\big) \vol(g).
\end{equation*}
\end{enumerate}
\end{condition}

\begin{remark}
In \cite[Section~3.2]{Bauer2013c} we had more complicated conditions, and we implicitly claimed that they imply \cref{sec:conditions} above.
There was, however, a significant gap in the argumentation of the main result.
Namely, we did not show the smoothness of the extended mappings on Sobolev completions.
The results of this article allow us to close this gap and to extend the analysis to the larger class of fractional order metrics.
\end{remark}

The following \lcnamecref{thm:satisfies} provides a wide class of operators which satisfy \cref{sec:conditions}:

\begin{theorem}[Conditions on \texorpdfstring{$P$}{P}]
\label{thm:satisfies}
Let $\ph\in(0,\pi)$, 
let $p \in (1,\infty)$, 
and let $f$ be a holomorphic function on the sector $S_\ph$ which satisfies for some constant $C>0$ that
\begin{equation*}
\forall z \in S_\ph: \qquad C\i |z^p| \le |f(z)|\le C |z^p|.
\end{equation*}
Then the field of operators
\begin{equation*}
\Met (M) \ni g \mapsto P_g:=f(1+\De^g)\in L(\Ga(S^2T^*M), \Ga(S^2T^*M))
\end{equation*}
satisfies \cref{sec:conditions} for any $\al \in (m/2,\infty)\cap[p,\infty)$. 
For $f(z)=z^p$ (i.e., the fractional Laplacian) the \lcnamecref{thm:satisfies} continues to hold for $p=1$.
\end{theorem}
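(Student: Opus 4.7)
The plan is to verify the four conditions \textnormal{(a)}--\textnormal{(d)} of \autoref{sec:conditions} one by one, relying on the machinery already assembled in the paper; the only genuinely nontrivial point is condition~\textnormal{(d)}, where one has to construct an $H^0(g)$-adjoint living in a scale of lower regularity than the operator itself.

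For condition~\textnormal{(a)}, I would apply \autoref{thm:perturbations} with $E=S^2T^*M$, $s=\al$, and $r=-2p$; the assumption $\al\geq p$ ensures $s,s+r\in[-\al,\al]$, and the upper bound $|f(z)|\leq C|z|^p$ gives exactly the decay hypothesis $\sup_{\la\in S_\ph}|\la^{-p}f(\la)|<\infty$. This yields that $g\mapsto P_g$ is real analytic into $L(\Ga_{H^\al},\Ga_{H^{\al-2p}})$. Invertibility and smoothness of the inverse are obtained by applying the same theorem to $\tilde f:=1/f$, which is holomorphic on $S_\ph$ by the lower bound and satisfies $\sup|\la^p\tilde f(\la)|<\infty$; the multiplicativity of the functional calculus gives $\tilde f(1+\De^g)=P_g^{-1}$.

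Condition~\textnormal{(b)} follows by the naturality of the Bochner Laplacian, which yields the intertwining $\ph^*\circ(1+\De^g)=(1+\De^{\ph^*g})\circ\ph^*$ for every $\ph\in\Diff(M)$; integrating the resulting resolvent intertwining $\ph^*\circ R_\la(1+\De^g)=R_\la(1+\De^{\ph^*g})\circ\ph^*$ against $f(\la)$ along the contour of \autoref{lem:functional}.\ref{lem:functional:d} gives $\ph^*\circ P_g=P_{\ph^*g}\circ\ph^*$. Condition~\textnormal{(c)} follows from the spectral picture of \autoref{lem:functional}: there is an $H^0(g)$-orthogonal eigenbasis $(e_i)$ with positive eigenvalues $\la_i\geq 1$, and $P_g$ acts diagonally as multiplication by $f(\la_i)$. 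Under the standing assumption that $f$ is real and positive on $(0,\infty)$ (which is implicit in the requirement that $P_g$ be a positive symmetric operator field), the eigenvalues $f(\la_i)$ are real and positive, so symmetry and nonnegativity hold on smooth sections and extend by density in view of \ref{sec:conditions:a}.

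The crux is condition~\textnormal{(d)}, where the hard work has been front-loaded into \autoref{lem:adjoint}. That lemma extends the directional derivative to a real analytic map
\begin{equation*}
\Met_{H^\al}(M)\times\Ga_{H^{2p-\al}}(S^2T^*M)\ni(g,q)\mapsto D_{g,q}P_g\in L(\Ga_{H^\al}(E),\Ga_{H^{-\al}}(E)),
\end{equation*}
where $E=S^2T^*M$. Fixing $g$ and $h,k\in\Ga_{H^\al}$, the map
\begin{equation*}
q\mapsto\int_M g^0_2\bigl((D_{g,q}P_g)h,k\bigr)\vol(g)
\end{equation*}
is a continuous linear functional on $\Ga_{H^{2p-\al}}(S^2T^*M)$, and by the $H^0(g)$-duality of \autoref{lem:volume} it is represented by a unique element of $\Ga_{H^{\al-2p}}(S^2T^*M)$, which I would take as the definition of $(D_{(g,\cdot)}P_gh)^*(k)$. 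The defining identity then holds in particular for all $q\in\Ga_{H^\al}\subset\Ga_{H^{2p-\al}}$, which is the required range. Smoothness in $(g,h)$ as an $L(\Ga_{H^\al},\Ga_{H^{\al-2p}})$-valued map follows by combining the joint real analyticity of the trilinear form with the real analytic dependence of the $H^0(g)$-pairing on $g$, and then invoking the convenient uniform boundedness principle of \autoref{sec:convenient}.\ref{sec:convenient:c} to pass from pointwise to operator-level smoothness.

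I expect the main obstacle to be precisely the regularity bookkeeping in the previous paragraph: for the duality construction to close, one needs the derivative to be defined on perturbations $q$ of regularity $2p-\al$, which may be substantially below $\al$. This is exactly what \autoref{lem:adjoint} supplies, and the hypothesis $\al\geq p$ is what makes $2p-\al\leq\al$ so that the duality lands in a reasonable space. The boundary case $f(z)=z^p$ with $p=1$ is easier and avoids the functional calculus altogether: there $P_g=1+\De^g$ and the extended derivative $D_{g,q}\De^g$ is furnished directly by \autoref{lem:derivative}, so the same duality argument applies without going through \autoref{lem:adjoint}.
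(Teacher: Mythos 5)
Your proposal follows the same overall route as the paper: condition~(a) via \autoref{thm:perturbations} applied to both $f$ and $1/f$, condition~(b) via naturality together with the resolvent integral, and condition~(d) via \autoref{lem:adjoint} followed by $H^0(g)$-dualization. The only genuinely different step is condition~(c): you argue directly on the eigenbasis of \autoref{lem:functional}, where $P_g$ is diagonal with entries $f(\la_i)$, whereas the paper approximates the $H^\al$ metric by smooth metrics and cites the smooth-metric case as known. Both are sound, and yours has the advantage of working intrinsically at the rough metric without an auxiliary limiting argument. You also make explicit something the paper leaves tacit: the hypotheses $C^{-1}|z^p|\le|f(z)|\le C|z^p|$ constrain only the modulus, so one must additionally assume $f$ is real and strictly positive on $(0,\infty)$ (e.g.\ $f(z)=-z^p$ passes the modulus test but violates condition~(c)); flagging this is a genuine improvement in rigor. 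For~(d), note that the paper reaches the adjoint by first rewriting $(g,q)\mapsto D_{g,q}P_g\in L(\Ga_{H^\al},\Ga_{H^{-\al}})$ as $(g,h)\mapsto D_{(g,\cdot)}P_g h\in L(\Ga_{H^{2p-\al}},\Ga_{H^{-\al}})$ and then dualizing the operator; your presentation defines $(D_{(g,\cdot)}P_g h)^*(k)$ directly as the Riesz representative of the functional $q\mapsto\int g^0_2\bigl((D_{g,q}P_g)h,k\bigr)\vol(g)$, which is the same construction unwound. One small correction: for $p=1$ the relevant case is handled inside \autoref{lem:adjoint} itself (the paper notes that the statement for $f(z)=z$, $p=1$, follows there directly from \autoref{lem:derivative}), so you do still go through \autoref{lem:adjoint}, just via its boundary clause rather than its main body.
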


\begin{proof}
We shall check \cref{sec:conditions}.\ref{sec:conditions:a}--\ref{sec:conditions:d} one by one. 
\begin{enumerate}[wide]
\item follows from \cref{thm:perturbations} applied to the functions $f$ and $f\i$.

\item 
$\Diff(M)$-invariance of $(1+\De^g)$ is well-known for smooth $g$ and follows in the general case by approximation, noting that the pull-back along a smooth diffeomorphism is a bounded linear map between Sobolev spaces of the same order of regularity \cite[Theorem~B.2]{inci2013regularity}.
By the resolvent integral representation of the functional calculus this implies $\Diff(M)$-invariance of $f(1+\De^g)$.

\item is well-known for smooth $g,h,k$ and follows in the general case by approximation using the continuity of $g\mapsto\langle\cdot,\cdot\rangle_{H^0(g)}$ established in \cref{lem:volume} and the continuity of $g\mapsto P_g$.

\item 
By \cref{lem:adjoint} the derivative of $P$ with respect to the metric extends to a smooth map
\begin{equation*}
\Met_{H^\al}(M) \times \Ga_{H^{2p-\al}}(S^2T^*M)\ni (g,q) \mapsto D_{g,q}P_g \in L(\Ga_{H^\al}(E),\Ga_{H^{-\al}}(E)).
\end{equation*}
Equivalently, by changing the order of the arguments $g$, $q$, and $h$, the following function is smooth:
\begin{equation*}
\Met_{H^\al}(M) \times \Ga_{H^\al}(E)\ni (g,h) \mapsto D_{(g,\cdot)}P_gh \in L(\Ga_{H^{2p-\al}}(S^2T^*M),\Ga_{H^{-\al}}(E)).
\end{equation*}
Dualization using the $H^0(g)$ duality shows that the adjoint is smooth
\begin{equation*}
\Met_{H^\al}(M) \times \Ga_{H^\al}(E)\ni (g,h) \mapsto (D_{(g,\cdot)}P_gh)^* \in L(\Ga_{H^{\al}}(E),\Ga_{H^{\al-2p}}(S^2T^*M)).
\end{equation*}
\qedhere
\end{enumerate}
\end{proof}

Next, we derive the geodesic equation of the metric $G^P$:
\begin{theorem}[The geodesic equation]\label{sec:geodesic}
In the notation of \cref{sec:weak}, the geodesic equation of the metric $G^P$ reads as 
\begin{multline*}
g_{tt}
=P_g\i\Big[\frac12(D_{(g,\cdot)}P_gg_t)^*(g_t)+\frac14 g \on{Tr}(g\i(P_gg_t)g\i g_t)
\\
+\frac12 g_t g\i (P_gg_t)+\frac12 (P_gg_t) g\i g_t-(D_{(g,g_t)}P_g)g_t
-\frac12\on{Tr}(g\i g_t) (P_gg_t)\Big],
\end{multline*}
where the subscripts $t$ denote differentiation in time, and $(D_{(g,\cdot)}P_gg_t)^*$ is the adjoint defined in \cref{sec:conditions}.\ref{sec:conditions:d}.
\end{theorem}

The above formula has been first obtained in~\cite[Section~3.1]{Bauer2013c} using a Hamiltonian approach. To keep the presentation self-contained we present a direct derivation.

\begin{proof}
The kinetic energy of a path $g\colon [0,1]\to \Met(M)$ is defined as 
\begin{equation*}
E(g)=\frac{1}{2}\int_0^1 G^P_g(g_t,g_t) dt.
\end{equation*}
The variation in the direction of a path $h\colon[0,1]\to\Ga(S^2T^*M)$ with vanishing end points is given by
\begin{align*}
D_{g,h} E
&=\int_0^1 \int_M\Bigg(\Tr\big((D_{g,h}g\i)P_gg_tg\i g_t\big)+ \Tr\big(g\i P_gg_tg\i h_t\big)\\
&\quad+\frac{1}{2} \Tr\big(g\i(D_{(g,h)}P)g_tg\i g_t\big)+\frac{1}{2}\Tr\big(g\i Pg_tg\i g_t\big)\frac{D_{g,h}\vol(g)}{\vol(g)}\Bigg)\vol(g)dt.
\end{align*}
Using the variation formulas \cite{Bauer2013c}
\begin{align*}
D_{g,h}g\i=-g\i h g\i,\qquad \frac{D_{g,h}\vol(g)}{\vol(g)}=\Tr(g\i h)
\end{align*}
we obtain
\begin{align*}
&D_{g,h} E=\int_0^1 \int_M\Bigg(\Tr\big(g\i hg\i P_gg_tg\i g_t\big)+ \Tr\big(g\i P_gg_tg\i h_t\big)\\
&\quad\quad+\frac{1}{2} \Tr\big(g\i(D_{g,h}P)g_tg\i g_t\big)+\frac{1}{2}\Tr\big(g\i Pg_tg\i g_t\big)\Tr(g\i h)\Bigg)\vol(g)dt.
\end{align*}
Integrating by parts in the variable $t$ of the second term and using the definition of the adjoint for the third term allows one to write the variation of the energy as
\begin{align*}
&D_{g,h} E=\int_0^1 \int_M\Tr\big(g\i hg\i P_g(g_{tt}-\Gamma_g(g_t,g_t))\big)\vol(g)dt,
\end{align*}
where the Christoffel symbol $\Gamma_g(g_t,g_t)$ is given by the right-hand side of the geodesic equation in the statement of the \lcnamecref{sec:geodesic}.
\end{proof}

We will show well-posedness of the geodesic equation using the Ebin--Marsden \cite{EM1970} approach of extending the geodesic spray to a smooth vector field on  $T\Met_{H^{\alpha}}(M)$ for sufficiently high $\al$ and showing that solutions exist on an interval which is independent of $\al$. 
The latter statement is a consequence of the no-loss-no-gain theorem of \cite{EM1970}, which we adapt to the present setting in the following \lcnamecref{lem:nolossnogain}. 

\begin{lemma}[No-loss-no-gain]
\label{lem:nolossnogain}
Let $\al>m/2$,
let $S$ be a smooth $\Diff(M)$-invariant vector field on $T\Met_{H^\al}(M)$,
let $T\in(0,\infty]$, 
let $U$ be an open $\Diff(M)$-invariant subset of $T\Met_{H^\al}(M)$,
and assume that the flow of $S$ exists as a smooth map
\begin{equation*}
\on{Fl}^S\colon [0,T) \times U \to T\Met_{H^\al}(M).
\end{equation*} 
Then the flow restricts to a smooth map
\begin{equation*}
\on{Fl}^S\colon [0,T) \times (U \cap T\Met_{H^{\al+1}}(M)) \to T\Met_{H^{\al+1}}(M).
\end{equation*} 
Thus, there is no loss or gain in regularity during the evolution along $S$.
\end{lemma}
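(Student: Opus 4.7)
The plan is to adapt the classical no-loss-no-gain argument of Ebin--Marsden \cite{EM1970} by exploiting the $\Diff(M)$-equivariance of $S$ together with the principle that for any $w_0 \in T\Met_{H^{\al+1}}(M)$ and smooth $X \in \X(M)$ the Lie derivative $\mathcal{L}_X w_0$ is a well-defined element of $T\Met_{H^\al}(M)$: the gain of one derivative in $w_0$ exactly compensates the loss incurred by Lie differentiation in a smooth direction. Pulling back along the flow of $X$ and invoking $\Diff(M)$-invariance of the flow of $S$ then transports this gain of regularity along $\on{Fl}^S$.

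More concretely, I would fix $w_0 \in U \cap T\Met_{H^{\al+1}}(M)$ and an arbitrary $X \in \X(M)$, and let $\varphi_s \in \Diff(M)$ denote the flow of $X$, which is globally defined in $s \in \mathbb{R}$ since $M$ is closed. The curve $s \mapsto \varphi_s^* w_0$ is continuously differentiable as a map into $T\Met_{H^\al}(M)$, with tangent vector $\mathcal{L}_X w_0 \in T_{w_0}(T\Met_{H^\al}(M))$ at $s=0$; this holds precisely because $w_0$ lies one Sobolev order higher and $\mathcal{L}_X\colon H^{\al+1}\to H^\al$ is bounded linear. By $\Diff(M)$-invariance of $S$ (and hence of its flow), one has
\begin{equation*}
\on{Fl}^S_t(\varphi_s^* w_0) \;=\; \varphi_s^* \on{Fl}^S_t(w_0),
\end{equation*}
so differentiating at $s=0$ and using Fr\'echet-differentiability of the smooth flow yields
\begin{equation*}
D_{w_0}\on{Fl}^S_t \cdot \mathcal{L}_X w_0 \;=\; \mathcal{L}_X \on{Fl}^S_t(w_0).
\end{equation*}
The left-hand side lies in $T_{\on{Fl}^S_t(w_0)}(T\Met_{H^\al}(M))$ since $D\on{Fl}^S_t$ is bounded linear between the $H^\al$-tangent spaces, whereas the right-hand side a priori only lives in $T\Met_{H^{\al-1}}(M)$; the identity therefore forces $\mathcal{L}_X\on{Fl}^S_t(w_0) \in T\Met_{H^\al}(M)$ for every $X \in \X(M)$.

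Next, I would pick a finite family $X_1,\dots,X_N \in \X(M)$ whose values span $TM$ at every point of the closed manifold $M$, and invoke the local-coordinate characterization
\begin{equation*}
H^{\al+1}(M) \;=\; \bigl\{u \in H^\al(M) : \mathcal{L}_{X_i}u \in H^\al(M) \text{ for } i=1,\dots,N\bigr\},
\end{equation*}
with equivalent norms, verified in the charts defining the Sobolev spaces in \autoref{sec:sobolev} by inverting the coefficient matrix of $(X_i)$ in a coordinate basis and using the module properties of Sobolev spaces. Applied componentwise to the pair $\on{Fl}^S_t(w_0)$ of symmetric two-tensors, this yields $\on{Fl}^S_t(w_0) \in T\Met_{H^{\al+1}}(M)$. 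To upgrade this pointwise assertion to joint smoothness of the restricted map $[0,T)\times(U\cap T\Met_{H^{\al+1}})\to T\Met_{H^{\al+1}}$, I would invoke convenient calculus: smoothness into $T\Met_{H^{\al+1}}$ is equivalent to smoothness into $T\Met_{H^\al}$ together with smoothness of each composition
\begin{equation*}
(t,w_0)\;\mapsto\;\mathcal{L}_{X_i}\on{Fl}^S_t(w_0)\;=\;D_{w_0}\on{Fl}^S_t(\mathcal{L}_{X_i}w_0)
\end{equation*}
as a map into $T\Met_{H^\al}$, and the latter is a composition of smooth maps since $\mathcal{L}_{X_i}\colon H^{\al+1}\to H^\al$ is bounded linear and $D\on{Fl}^S$ is smooth by hypothesis.

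The main obstacle is the bookkeeping of regularities in the transported identity: a priori the right-hand side $\mathcal{L}_X \on{Fl}^S_t(w_0)$ exists only in $H^{\al-1}$, and the crux is to argue, via Fr\'echet-differentiability of the smooth flow, that it must agree there with the left-hand side, which a posteriori lives in $H^\al$. All other steps are formal and rely only on the smoothness hypothesis on $S$, its $\Diff(M)$-invariance, and the module properties of Sobolev spaces established in \autoref{sec:preliminaries}.
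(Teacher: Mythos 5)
Your proposal is correct and follows essentially the same route as the paper's proof: you use the $\Diff(M)$-equivariance of the flow to derive the transport identity $\mathcal L_X \on{Fl}^S_t = T\on{Fl}^S_t \circ \mathcal L_X$, observe that the right-hand side lies in $H^\al$ thanks to the extra regularity of the initial data, and then conclude via a finite family of vector fields characterizing $H^{\al+1}$ sections in terms of $H^\al$ sections and their Lie derivatives. The one cosmetic difference is that the paper's step (a) is formulated via globally extended coordinate vector fields (so no coefficient-matrix inversion is needed), whereas you pick an arbitrary spanning family and invert locally; both give the same initial-topology statement. You also spell out the joint $(t,w_0)$-smoothness a bit more explicitly than the paper, which only states the $t$-smoothness argument; that is a harmless (and arguably welcome) elaboration.
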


\begin{proof}
The proof is divided in two steps.
\begin{enumerate}[wide]
\item 
\label{lem:nolossnogain:a}
We claim that there is a finite number $n\in\mathbb N$ and  vector fields $X_1,\dots,X_n$ such that $\Ga_{H^{\al+1}}(S^2T^*M)$ carries the initial topology with respect to the map
\begin{equation*}
\Ga_{H^{\al+1}}(S^2T^*M) \ni h \mapsto (h,\mathcal L_{X_1}h,\dots,\mathcal L_{X_n}h) \in \Ga_{H^\al}(S^2T^*M)^{n+1},
\end{equation*}  
where $\mathcal L$ denotes the Lie derivative.
Loosely speaking, this means that $h$ has regularity $H^{\al+1}$ whenever $h$ and its Lie derivatives have regularity $H^\al$.
The claim can be shown by adapting the proof of \cite[Lemma~12.2]{EM1970} to diffeomorphisms acting on Riemannian metrics by pull backs.
This task is facilitated by the fact that the vector fields in the present setting are not required to be divergence free.
The key observation is that in any chart, Lie derivatives along coordinate vector fields coincide with ordinary derivatives. 
Moreover, the charts can be constructed as in \cref{sec:sobolev} such that the coordinate vector fields extend to smooth vector fields on all of $M$.
Thus, the claim follows from the well-known fact that the space $H^{\al+1}(\mathbb R^m,\mathbb R^{m(m+1)/2})$ carries the initial topology with respect to the map
\begin{equation*}
H^{\al+1}(\mathbb R^m,\mathbb R^{m(m+1)/2}) \ni h \mapsto (h,\partial_{x^1}h,\dots,\partial_{x^m}h) \in H^\al(\mathbb R^m,\mathbb R^{m(m+1)/2})^{m+1}.
\end{equation*}

\item 
The rest of the proof is as in \cite[Theorem 12.1]{EM1970}.
Let $X \in \mathfrak X(M)$ be a smooth vector field, 
and let $\mathbb R\ni s \mapsto \ph_s:=\on{Fl}^X_s \in \Diff(M)$ be the flow of $X$ on $M$.
As the $\Diff(M)$-equivariance of $S$ implies the $\Diff(M)$-equivariance of $\on{Fl}^S$, one obtains for any $s\in\mathbb R_{\geq 0}$, $t \in [0,T]$, and $(g,h)\in T\Met_{H^{\al+1}}(M)$ that
\begin{equation*}
\ph_s^*\big(\on{Fl}^S_t(g,h)\big)=\on{Fl}^S_t\big(\ph_s^*(g,h)\big).
\end{equation*}
Differentiating this equation with respect to $s$ and evaluating at $s=0$ yields 
\begin{equation*}
\mathcal L_X\big(\on{Fl}^S_t(g,h)\big) 
= 
T\on{Fl}^S_t\big(\mathcal L_X(g,h)\big).
\end{equation*}
The right-hand side, seen as a function of $t$, is a smooth curve in $TT\Met_{H^\al}(M)$ thanks to the $H^{\al+1}$ regularity of $(g,h)$ and the smoothness of $\on{Fl}^S$ in the $H^\al$ topology.
Thus, the left-hand side enjoys the same regularity, and it follows from \ref{lem:nolossnogain:a} that $t\mapsto\on{Fl}^S_t(g,h)$ is a smooth curve in $T\Met_{H^{\al+1}}(M)$.
\qedhere
\end{enumerate} 
\end{proof}

We are now able to prove the main result of this section, namely local well-posedness of the geodesic equation under \cref{sec:conditions}, which is satisfied for fractional order Sobolev metrics by \cref{thm:satisfies}.

\begin{theorem}[Well-posedness of the geodesic equation]
\label{thm:wellposed}
Assume that the operator $P$ satisfies \cref{sec:conditions} for some $p\in \mathbb R_{\geq 0}$ and all $\al \in [\al_0,\infty)$ with $\al_0\in (m/2,\infty)$.
Then the following statements hold for each $\al \in [\al_0,\infty)$.
\begin{enumerate}
\item 
\label{thm:wellposed:a}
The initial value problem for the geodesic equation has unique local solutions in $\Met_{H^\al}(M)$. 
The solutions depend smoothly on $t$ and on the initial conditions $g(0)\in \Met^{\al}(M)$ and $g_t(0)\in \Ga_{H^\al}(S^2T^*M)$.

\item
\label{thm:wellposed:b}
The Riemannian exponential map $\exp^{P}$ exists and is smooth on a neighborhood of the zero section in $T\Met_{H^\al}(M)$, 
and $(\pi,\exp^{P})$ is a diffeomorphism from a (smaller) neighborhood of the zero section to a neighborhood of the diagonal in $\Met^{\al}(M)\x \Met^{\al}(M)$. 

\item
\label{thm:wellposed:c}
The neighborhoods in \ref{thm:wellposed:a}--\ref{thm:wellposed:b} are uniform in $\al$ and can be chosen open in the $H^{\al_0}$ topology.
Thus, \ref{thm:wellposed:a}--\ref{thm:wellposed:b} continue to hold for $\al=\infty$, i.e., on the Fr\'echet manifold $\Met(M)$ of smooth metrics.
\end{enumerate}
\end{theorem}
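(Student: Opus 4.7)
The plan is to adapt the Ebin--Marsden strategy: rewrite the geodesic equation as a first-order ODE on $T\Met_{H^{\al}}(M)$, exhibit a smooth geodesic spray $S$, apply Picard--Lindel\"of for the local flow, and invoke the no-loss-no-gain lemma to propagate regularity uniformly in $\al$. Concretely, for $\al \in [\al_0,\infty)$ write $u=(g,h)\in T\Met_{H^\al}(M)$ with $h=g_t$ and set $S(g,h)=(h,\Ga_g(h))$, where $\Ga_g(h)$ is the right-hand side of the geodesic equation of \autoref{sec:geodesic} with $g_t$ replaced by $h$. Local well-posedness in $\Met_{H^\al}(M)$ (part \ref{thm:wellposed:a}) will follow from smoothness of $S$ together with Picard--Lindel\"of on the Banach manifold $T\Met_{H^\al}(M)$.

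The central technical step is verifying that $S\colon T\Met_{H^\al}(M)\to TT\Met_{H^\al}(M)$ is smooth. Reading off the geodesic equation, each summand in the bracket maps $(g,h)$ to an element of $\Ga_{H^{\al-2p}}(S^2T^*M)$, after which $P_g\i$, bounded $\Ga_{H^{\al-2p}}\to\Ga_{H^\al}$ by condition \ref{sec:conditions:a}, brings the result back into $\Ga_{H^\al}$. The pointwise-algebraic terms $g\cdot\operatorname{Tr}(g\i(P_gh)g\i h)$, $h\,g\i(P_gh)$, $(P_gh)g\i h$, and $\operatorname{Tr}(g\i h)(P_gh)$ are smooth $\Met_{H^\al}(M)\times\Ga_{H^\al}\to\Ga_{H^{\al-2p}}$ by combining condition \ref{sec:conditions:a}, real analyticity of $g\mapsto g\i$ (\autoref{lem:inverse}), and the module property \autoref{thm:module} (here $\al>m/2$ is what we use). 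The term $(D_{(g,\cdot)}P_gh)^*(h)$ is smooth by condition \ref{sec:conditions:d}, and $(D_{(g,h)}P_g)h$ is smooth by differentiating condition \ref{sec:conditions:a}. Composition with $P_g\i$ yields a smooth spray; this is the step I expect to be the main obstacle, as it requires matching the Sobolev bookkeeping of every term to the range of $P_g\i$. Picard--Lindel\"of then gives unique integral curves $t\mapsto \on{Fl}^S_t(g,h)$ depending smoothly on $(t,g,h)$, proving \ref{thm:wellposed:a}.

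For \ref{thm:wellposed:b}, the Riemannian exponential map is $\exp^P_g(h)=\pi\circ\on{Fl}^S_1(g,h)$, defined and smooth on a neighborhood of the zero section by smooth dependence of the flow on initial data and a rescaling argument in $t$. Its fiber derivative at $h=0$ is the identity $T_0(T_g\Met_{H^\al}(M))\to T_g\Met_{H^\al}(M)$ since $S$ vanishes on the zero section, and the horizontal derivative of $(\pi,\exp^P)$ at $(g,0)$ is also the identity. Hence the inverse function theorem on Banach manifolds makes $(\pi,\exp^P)$ a local diffeomorphism onto a neighborhood of the diagonal in $\Met_{H^\al}(M)\times\Met_{H^\al}(M)$.

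For \ref{thm:wellposed:c}, I would first carry out \ref{thm:wellposed:a}--\ref{thm:wellposed:b} at the base level $\al=\al_0$, obtaining a $\Diff(M)$-invariant open neighborhood $U_0\subseteq T\Met_{H^{\al_0}}(M)$ of the zero section and a time $T>0$ on which $\on{Fl}^S$ is smooth. The spray $S$ is $\Diff(M)$-equivariant because condition \ref{sec:conditions:b} passes through the purely tensorial operations of the geodesic equation. \autoref{lem:nolossnogain} then applies inductively: for any $\al\geq\al_0$ the flow restricts smoothly to initial data in $U_0\cap T\Met_{H^\al}(M)$ on the same interval $[0,T)$, with no loss of regularity. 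Taking the intersection over $\al\in\mathbb N$ yields a smooth flow on the open neighborhood $U_0\cap T\Met(M)$ in the Fr\'echet manifold of smooth metrics, and the inverse function theorem argument for $\exp^P$ carries over since the uniform neighborhoods are nested. This establishes \ref{thm:wellposed:c} and thereby completes the proof.
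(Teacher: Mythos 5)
Your proposal is correct and follows essentially the same route as the paper: rewrite the geodesic equation as a flow equation for a spray $S$ on $T\Met_{H^\al}(M)$, verify smoothness of $S$ term by term from the conditions in \autoref{sec:conditions} together with \autoref{lem:inverse} and the module property \autoref{thm:module}, invoke Picard--Lindel\"of, obtain the exponential map as in \cite[Theorem~3.2]{Bauer2013c}, and use the $\Diff(M)$-equivariance together with the no-loss-no-gain \autoref{lem:nolossnogain} for part \ref{thm:wellposed:c}.
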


\begin{proof}
\begin{enumerate}[wide]
\item
This can be shown as in \cite[Theorem~3.2]{Bauer2013c}. 
For the convenience of the reader we repeat the proof in the notation of the present paper.
The geodesic equation can be written as 
\begin{align*}
g_t &= S_1(g,h) := h\\
h_t&= S_2(g,h) := P_g^{-1}\Big(\frac12(D_{(g,\cdot)}P_gg_t)^*(g_t)+\frac14 g \on{Tr}(g\i(P_gg_t)g\i g_t)\\
&\qquad+ \frac12 g_t g\i(P_gg_t)+\frac12 (P_gg_t)g\i g_t-\frac12\on{Tr}(g\i g_t) (P_gg_t)\Big).
\end{align*}
This is the flow equation of the geodesic spray $S=(S_1,S_2)$, which is a vector field on the tangent space $T\Met_{H^{\alpha}}(M) = \Met_{H^{\alpha}}(M)\times \Ga_{H^\al}(S^2T^*M)$. 
For any $\al>\al_0$, a term by term investigation of the right-hand side using \cref{sec:conditions}.\ref{sec:conditions:c}--\ref{sec:conditions:d} shows that $S$ is a smooth vector field on $T\Met_{H^{\alpha}}(M)$.
Thus, the theorem of Picard-Lindel\"of shows that the flow of $S$ exists as a smooth map
\begin{equation*}
\on{Fl}^S\colon[0,T)\times U \to T\Met_{H^{\alpha}}(M)
\end{equation*}
for some $T>0$ and some open subset $U$ of $T\Met_{H^{\alpha}}(M)$, which may be chosen $\Diff(M)$-invariant thanks to the $\Diff(M)$-equivariance of $S$.

\item follows from \ref{thm:wellposed:a} as in \cite[Theorem~3.2]{Bauer2013c}, and \ref{thm:wellposed:c} follows from \cref{lem:nolossnogain}.
\qedhere
\end{enumerate}
\end{proof}

\bibliography{immersions}
\bibliographystyle{abbrv}

\end{document}